\newcommand{\mbf}[1]{\mbox{\boldmath$#1$}}
\newcommand{\diag}[0]{\text{diag}}
\newcommand{\R}[0]{\mathbb{R}}
\newcommand{\Id}[0]{\text{Id}}
\newcommand{\valpha}[0]{\mbf \alpha}
\newcommand{\vepsilon}[0]{\mbf \varepsilon}
\newcommand{\vmu}[0]{\mbf \mu}
\newcommand{\Prob}[0]{\mathds{P}}
\newcommand{\SNR}{\mathsf{SNR}}
\newcommand{\op}[0]{\text{op}}
\theoremstyle{remark}
\newtheorem{thm}{Theorem}[section]
\newtheorem{lem}[thm]{Lemma}
\newtheorem{cor}[thm]{Corollary}
\newtheorem{rem}[thm]{Remark}
\renewcommand{\leq}{\leqslant} 
\renewcommand{\geq}{\geqslant}
\def\qed{ \hfill $\blacksquare$}  
\newcommand{\rd}{\mathrm{d}}
\newcommand{\re}{\mathrm{e}}
\newcommand{\cA}{\mathcal{A}}\newcommand{\cB}{\mathcal{B}}
\newcommand{\cL}{\mathcal{L}}
\newcommand{\vzero}{\mathbf{0}}\newcommand{\vone}{\mathbf{1}}
\newcommand{\vX}{\mathbf{X}}\newcommand{\vZ}{\mathbf{Z}}
\newcommand{\vc}{\mathbf{c}}
\newcommand{\vr}{\mathbf{r}}
\newcommand{\vu}{\mathbf{u}}
\newcommand{\vv}{\mathbf{v}}
\newcommand{\mv}[1]{\boldsymbol{#1}}
\newcommand{\mvtheta}{\boldsymbol{\theta}}
\newcommand{\mvmu}{\boldsymbol{\mu}}
\newcommand{\bE}{\mathbb{E}}
\newcommand{\bR}{\mathbb{R}}
\newcommand{\bV}{\mathbb{V}}
\newcommand{\sC}{\mathscr{C}}
\DeclareMathOperator{\E}{\mathds{E}}
\DeclareMathOperator{\argmax}{argmax}
\DeclareMathOperator{\tr}{tr} 
\begin{document}

\title[Sharp threshold of Gaussian mixtures]{Cutoff for exact recovery of Gaussian mixture models}
\author[Xiaohui Chen]{Xiaohui Chen}
\address{\newline Xiaohui Chen\newline
Department of Statistics\newline
University of Illinois at Urbana-Champaign\newline
725 S. Wright Street, Champaign, IL 61820\newline
{\it E-mail}: \href{mailto:xhchen@illinois.edu}{\tt xhchen@illinois.edu}\newline 
{\it URL}: \href{http://publish.illinois.edu/xiaohuichen/}{\tt http://publish.illinois.edu/xiaohuichen/}
}
\author[Yun Yang]{Yun Yang}
\address{\newline Yun Yang\newline
Department of Statistics\newline
University of Illinois at Urbana-Champaign\newline
725 S. Wright Street, Champaign, IL 61820\newline
{\it E-mail}: \href{mailto:yy84@illinois.edu}{\tt yy84@illinois.edu}\newline 
{\it URL}: \href{https://sites.google.com/site/yunyangstat/}{\tt https://sites.google.com/site/yunyangstat/}
}

\date{First arXiv version: January 5, 2020. This version: \today}
\keywords{$K$-means, Gaussian mixture models, semidefinite relaxation, exact recovery, sharp threshold, optimality}

\begin{abstract}
We determine the information-theoretic cutoff value on separation of cluster centers for exact recovery of cluster labels in a $K$-component Gaussian mixture model with equal cluster sizes. Moreover, we show that a semidefinite programming (SDP) relaxation of the $K$-means clustering method achieves such sharp threshold for exact recovery without assuming the symmetry of cluster centers.
\end{abstract}
\maketitle


\section{Introduction}
\label{sec:introduction}
Let $\vX_{1},\dots,\vX_{n}$ be a sequence of independent random vectors in $\bR^{p}$ sampled from a $K$-component Gaussian mixture model with $K \leq n$. Specifically, we assume that there exists a partition $G_{1}^{*},\dots,G_{K}^{*}$ of the index set $[n] := \{1,\dots,n\}$ such that if $i \in G_{k}^{*}$, then 
\begin{equation}
\label{eqn:clustering_model}
\vX_{i} = \mvmu_{k} + \vepsilon_i, \quad \vepsilon_i \overset{\mbox{i.i.d.}}{\sim} N(\mathbf 0, \sigma^{2} I_{p}),
\end{equation}
where $\mvmu_{1},\dots,\mvmu_{K} \in \bR^{p}$ are the unknown cluster centers and $\sigma^{2} > 0$ is the common noise variance. For simplicity, we assume that $\sigma^{2}$ is known. Our main focus of this paper is to investigate the problem of optimal exact recovery for the true partition (or clustering) structure $G_{1}^{*},\dots,G_{K}^{*}$.

For each partition $G_{1},\dots,G_{K}$ of $[n]$, let $H = (h_{ik}) \in \{0, 1\}^{n \times K}$ be the binary {\it assignment} matrix of the observation $\vX_{i}$ to the cluster $k$, i.e.,  
\[
h_{ik} = \left\{
\begin{array}{cc}
1, & \mbox{if } i \in G_{k}, \\
0, & \mbox{otherwise}, \\
\end{array}
\right. \quad \mbox{for } i \in [n], \; k \in [K].
\]
Since each row of $H$ contains exactly one nonzero entry, there is one-to-one mapping (up to assignment labeling) between the partition and the assignment matrix. Thus recovery of the true clustering structure is equivalently to recovery of the associated assignment matrix.

Given the data matrix $X_{p \times n} = (\vX_{1},\dots,\vX_{n})$, the optimal estimator that maximizes the probability of recovering the clustering labels correctly is the maximum a posteriori (MAP) estimator. If the label assignment is uniformly random, then the MAP estimator is equivalent to the maximum likelihood estimator (MLE), where the log-likelihood function is given by 
\[
\ell(H, \, \mvmu_{1},\dots,\mvmu_{K}) = -{np \over 2} \log(2\pi\sigma^{2}) - {1 \over 2\sigma^{2}} \sum_{i=1}^{n} \sum_{k=1}^{K} h_{ik} \|\vX_{i} - \mvmu_{k}\|_{2}^{2}.
\]
Then the MLE corresponds to the solution of 
\begin{equation}
\label{eqn:mle_mixture}
\min_{H, \, \mvmu_{1},\dots,\mvmu_{K}} \sum_{i=1}^{n} \sum_{k=1}^{K} h_{ik} \|\vX_{i} - \mvmu_{k}\|_{2}^{2}
\end{equation}
subject to the constraint that $H$ is an assignment matrix.

Since we focus on the recovery of the true clustering structure $G_{1}^{*},\dots,G_{K}^{*}$, we may first profile the ``nuisance parameters" $\mvmu_{1},\dots,\mvmu_{K}$, whose MLEs are given by 
\[
\hat{\mvmu}_{k} = {\sum_{i=1}^{n} h_{ik} \vX_{i} \over \sum_{i=1}^{n} h_{ik}}= {1 \over |G_{k}|} \sum_{i \in G_{k}} \vX_{i},
\]
where $|G_{k}| = \sum_{i=1}^{n} h_{ik}$ denotes the cardinality of the $k$-th cluster. Substituting $\hat{\mvmu}_{k}$ into~\eqref{eqn:mle_mixture}, we see that the MLE for $H$ (and thus for $G_{1},\dots,G_{K}$) is the solution of the constrained combinatorial optimization problem:
\begin{equation}
\label{eqn:mle_assignment}
\max_{G_{1},\dots,G_{K}} \sum_{k=1}^{K} {1 \over |G_{k}|} \sum_{i,j \in G_{k}} \langle \vX_{i}, \vX_{j} \rangle \quad \mbox{subject to } \bigsqcup_{k=1}^{K} G_{k} = [n],
\end{equation}
where $\sqcup$ denotes the disjoint union.

It is now clear that, under the Gaussian mixture model, the MLE in~\eqref{eqn:mle_assignment} is equivalent to the classical $K$-means clustering method~\cite{MacQueen1967_kmeans}, which minimizes the total intra-cluster squared Euclidean distances. Since the $K$-means clustering problem is known to be worst-case $\mathsf{NP}$-hard~\cite{Dasgupta2007,MahajanNimbhorkarVaradarajan2009}, one can expect that a polynomial-time algorithm for computing the MLE of the clustering structure with exact solutions only exists in certain cases. Because of this computational barrier of the original $K$-means problem, various computationally tractable approximation algorithms are proposed in literature. 

A widely used algorithm for solving the $K$-means is Lloyd's algorithm~\cite{Lloyd1982_TIT}, which is an iterative algorithm that sequentially refines the partition structure to ensure that the $K$-means objective function is monotonically decreasing. Lloyd's algorithm has a similar nature as the classical expectation-maximization (EM) algorithm~\cite{Dempster77maximumlikelihood} in that, while the EM implicitly performs soft clustering at every E-step, Lloyd’s algorithm does hard clustering at each iteration via the Voronoi diagram. 

Given a suitable initialization (such as the spectral clustering method~\cite{KannanVempala2009}), it is shown in~\cite{LuZhou2016} that the clustering error for Lloyd's algorithm converges to zero exponentially fast, provided that 
\begin{equation}
\label{eqn:separation_lower_bound_LuZhou}
\Delta^{2} := \min_{1 \leq k \neq l \leq K} \|\mvmu_{k} - \mvmu_{l}\|_{2}^{2} \geq C \sigma^{2} {K n \over \underline{n}} \Big( 1 \vee {K p \over n} \Big),
\end{equation}
where $\underline{n} = \min_{k \in [K]} |G^{*}_{k}|$ is the minimal cluster size and $a \vee b = \max(a,b)$.

Separation lower bound in~\eqref{eqn:separation_lower_bound_LuZhou} is not sharp (in the high-dimensional setting when $p \gg n$). In the simplest symmetric two-component Gaussian mixture model: 
\[
\vX_{i} = \mvmu \eta_{i} + \vepsilon_{i},
\]
where $\eta_{i} = 1, i \in G_{1}^{*}$ and $\eta_{i} = -1, i \in G_{2}^{*}$, \cite{Ndaoud2019} proposes a simple iterative thresholding algorithm that achieves the sharp threshold on $\|\mvmu\|_{2}^{2}$ for exact recovery with high probability, which is given by 
\begin{equation}
\label{eqn:threshold_threshold_K=2}
\sigma^{2} \left( 1+ \sqrt{1 + {2p \over n \log{n}}} \right) \log{n}.
\end{equation}
It should be noted that the algorithm in~\cite{Ndaoud2019} critically depends on the symmetry of the Gaussian centers (i.e., $\mvmu$ and $-\mvmu$) and it is structurally difficult to extend such algorithm with maintained statistical optimality to a general $K$-component Gaussian mixture model without assuming the centers are equally spaced.

Another active line of research focuses on various convex relaxed versions of the $K$-means problem that is solvable in polynomial-time~\cite{PengWei2007_SIAMJOPTIM,MixonVillarRachel2016,LiLiLingStohmerWei2017,FeiChen2018,Royer2017_NIPS,GiraudVerzelen2018,BuneaGiraudRoyerVerzelen2016}. The best known rate of convergence achieved by the semidefinite programming (SDP) relaxed $K$-means for the Gaussian mixture model~\eqref{eqn:clustering_model} is given by~\cite{GiraudVerzelen2018}. Specifically, it is shown therein that misclassification errors of the SDP originally proposed in~\cite{PengWei2007_SIAMJOPTIM} for relaxing the $K$-means has the exponential rate of convergence $\exp(-C \cdot \SNR^{2})$, where the signal-to-noise ratio is defined as 
\begin{equation}
\label{eqn:exponential_rate_SDP_Kmeans_GiraudVerzelen}
\SNR^{2} = {\Delta^{2} \over \sigma^{2}} \wedge {\underline{n} \Delta^{4} \over p \sigma^{4}} \geq c {n \over \underline{n}}
\end{equation}
and $a \wedge b = \min(a,b)$. In particular, the exponential rate implies that exact recovery is achieved by the SDP relaxed $K$-means with high probability in the equal cluster size case $\underline{n} = n / K$ if minimal separation of cluster centers satisfies the lower bound
\begin{equation}
\label{eqn:separation_lower_bound_GiraudVerzelen}
\Delta^{2} \geq C \sigma^{2} \left( 1 \vee \sqrt{K p \over n \log{n}} \right) \log{n}.
\end{equation}

Now comparing~\eqref{eqn:separation_lower_bound_GiraudVerzelen} with the optimal exact results~\eqref{eqn:threshold_threshold_K=2} in the special symmetric two-component Gaussian mixture model, it is natural to ask the following question: 
\begin{quote}{\it does the SDP relaxed $K$-means clustering method achieve a sharp threshold for exact recovery of the general $K$-component Gaussian mixture model?}\end{quote}

To the best knowledge of ours, this is an open question in literature. In this paper, we provide an affirmative answer to this question: we show that there is an SDP relaxation of the $K$-means clustering method (given in~\eqref{Eqn:primal_SDP_PW} below) achieving the exact recovery with high probability if $\Delta^{2} \geq (1+\alpha) \overline{\Delta}^{2}$, where 
\begin{equation}
\label{eqn:separation_lower_bound_ChenYang}
\overline{\Delta}^{2} = 4 \sigma^{2} \left( 1 + \sqrt{1 + {K p \over n \log{n}}} \right) \log{n}.
\end{equation}
In addition, if $\Delta^{2} \leq (1-\alpha) \overline{\Delta}^{2}$, then the probability of exact recovery for any estimator vanishes to zero under the equal cluster size scenario. Thus $\overline{\Delta}^{2}$ yields the information-theoretic cutoff value on the minimal separation of cluster centers for exact recovery of the $K$-component Gaussian mixture model, and the SDP relaxation for the $K$-means is minimax-optimal in the sense that sharp phase transition of the probability of wrong recovery from zero to one occurs at the critical threshold given by the $\overline \Delta^2$.

\subsection{Related work}
There is a vast literature studying the clustering problem on the Gaussian mixture model, or more generally finite mixture models. Regarding clustering labels as missing data, parameter estimation is often carried out by the EM algorithm~\cite{Dempster77maximumlikelihood,FraleyRaftery2002_JASA}. The EM algorithm has been extensively studied in the statistics and machine learning literature~\cite{chen1995,balakrishnan2017,XuHsuMaleki2016_NIPS,KlusowskiBrinda2016,YanYinSarkar2017_NIPS,DaskalakisTzamosZampetakis2017_COLT,Dwivedi2018,WuZhou2019,Dwivedi2019}. Optimal rate of convergence for estimating the mixing distribution in finite mixture models is derived in~\cite{chen1995}. Consistency of the $K$-means estimation of the clustering centers is studied in~\cite{MacQueen1967_kmeans,pollard1981}, without concerning the computational complexity. Computationally efficient algorithms for solving the $K$-means include Lloyd's algorithm~\cite{Lloyd1982_TIT,LuZhou2016} and convex relaxations~\cite{PengWei2007_SIAMJOPTIM,MixonVillarRachel2016,LiLiLingStohmerWei2017,FeiChen2018,Royer2017_NIPS,GiraudVerzelen2018,BuneaGiraudRoyerVerzelen2016,Awasthi2015_ITCS}. Other popular clustering methods include the spectral clustering~\cite{Meila01learningsegmentation,NgJordanWeiss2001_NIPS,Vempala04aspectral,Achlioptas2005McSherry,KumarKannan2010,AwasthiSheffet2012,vanLuxburg2007_spectralclustering,vonLuxburgBelkinBousquet2008_AoS} and variants of the $K$-means~\cite{ArthurVassilvitskii2007,OstrovskyRabaniSchulmanLeonard2013Swamy,NelloreWald2015_IC,ChenYang2018,ChenYang2019}. Analysis under the mixture models has also been done under other clustering models such as the stochastic ball models~\cite{NelloreWald2015_IC,Awasthi2015_ITCS,FeiChen2018}.

Parallel to the (mixture) model-based clustering framework, there are many similar methods and algorithms proposed for community detection in network data based on the stochastic block model (SBM)~\cite{HollandLaskeyLeinhardt1983_SN,DyerFrieze1989_JoA}. Successful algorithms for community detection, partial and exact recovery under the SBM have been extensively studied in literature -- these include spectral algorithms~\cite{Krzakala20935,Massoulie2014_STOC,LeiAlessandro2015_AoS}, SDP relaxations~\cite{EmmanuelBandeiraHall2016_IEEETIT,AminiLevina2018_AoS,HajekWuXu2016a_IEEETIT,HajekWuXu2016b_IEEETIT,GuedonVershynin2016_PTRF,Bandeira2018_FoCM,LiChenXu2018_convexrelax}, among others~\cite{MosselNeemanSly2016_EJP,MosselNeemanSly2016_AoAP}.

\subsection{Notation}

Let $\vone_{n}$ be the $n \times 1$ vector of all ones. For two matrices $A$ and $B$ of the same size, let $\langle A, B \rangle = \tr(A^{T} B)$ be the usual inner product. Throughout the rest of the paper, we fix the notation $n_{k} = |G_{k}^{*}|$, $m = \min_{1 \leq k \neq l \leq K}\Big\{\frac{2n_kn_l}{n_k+n_l}\Big\}$, and $\underline n=\min_{k\in [K]} n_k$ as the minimal cluster size.

\section{Main result}
\label{sec:main_result}
In this section, we state our main result on the information-theoretic cutoff value of the exact recovery of the Gaussian mixture model in~\eqref{eqn:clustering_model}. 

\begin{thm}[Separation upper bound for exact recovery via SDP relaxation]\label{thm:SDP_bound}
If there exist constants $\delta>0$ and $\beta\in(0,1)$ such that 
\begin{align*}
\log n \geq \frac{(1-\beta)^2}{\beta^2} \frac{C_1n}{m}, \qquad \delta\leq \frac{\beta^2}{(1-\beta)^2}\frac{C_2}{K}, \qquad m \geq {4 (1+\delta)^2 \over \delta^2},
\end{align*}
and
\begin{equation*}
\Delta^2 \geq \frac{4\sigma^2(1+2\delta)}{(1-\beta)^{2}} \left(1+\sqrt{1+\frac{(1-\beta)^{2}}{(1+\delta)} \frac{p}{m\log n}+ C_3R_{n} }\,\right) \,\log n
\end{equation*}
with 
\[
R_{n} = {(1-\beta)^{2} \over (1+\delta) \log{n}} \left( {\sqrt{p\log{n}} \over \underline n} + {\log{n} \over \underline n} \right),
\]
then the SDP in~\eqref{Eqn:primal_SDP_PW} achieves exact recovery with probability at least $1-C_4\,K^2\,n^{-\delta}$, where $C_i$, $i=1,2,3,4$, are universal constants.
\end{thm}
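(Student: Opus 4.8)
The plan is to analyze the SDP via its Karush--Kuhn--Tucker (KKT) conditions and construct an explicit dual certificate. Recall that the SDP in~\eqref{Eqn:primal_SDP_PW} optimizes over positive semidefinite matrices $Z$ subject to trace, row-sum, and nonnegativity constraints, and that the true clustering corresponds to the block-diagonal matrix $Z^* = \sum_{k=1}^K \frac{1}{n_k} \vone_{G_k^*}\vone_{G_k^*}^T$. First I would write down the dual feasibility / complementary slackness conditions that guarantee $Z^*$ is the \emph{unique} primal optimum: one needs a dual matrix built from a diagonal component, a row-wise Lagrange multiplier vector, and a nonnegative multiplier matrix for the entrywise constraints, such that a certain matrix $\Lambda = \Lambda(X)$ (a perturbation of $-XX^T$ after removing the cluster-mean directions) is positive semidefinite on the orthogonal complement of the column span of the cluster indicators, and is strictly positive in the relevant off-diagonal blocks. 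This reduces exact recovery to two events: (a) a \emph{spectral} event, that the ``noise part'' of $\Lambda$ restricted to the complement of $\mathrm{span}\{\vone_{G_k^*}\}$ has operator norm below an explicit margin; and (b) an \emph{entrywise} event, that for every pair $i \in G_k^*$, $j \in G_l^*$ with $k \neq l$, a scalar statistic of the form $\langle \vX_i - \bar{\vX}_{G_k}, \cdot\rangle$-type quantity exceeds a threshold proportional to $\Delta^2$.

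Next I would carry out the probabilistic estimates. For the spectral event (a), the relevant random matrix is essentially a centered Gaussian sample covariance-type object of effective dimension $p$ restricted to an $(n-K)$-dimensional subspace; its operator norm is controlled by standard sub-Gaussian matrix concentration (e.g.\ $\sqrt{p} + \sqrt{n}$-type bounds), and the hypotheses $\log n \geq \frac{(1-\beta)^2}{\beta^2}\frac{C_1 n}{m}$ and $m \geq 4(1+\delta)^2/\delta^2$ are exactly what is needed to make this operator-norm term negligible compared to the signal margin $\asymp m\Delta^2$. This is where the $C_1 n / m$ term and the $R_n$ correction $\frac{\sqrt{p\log n}}{\underline n} + \frac{\log n}{\underline n}$ enter: $R_n$ absorbs the lower-order cross terms between signal and noise and the fluctuation of the empirical cluster means $\bar{\vX}_{G_k}$ around $\mvmu_k$. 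For the entrywise event (b), each of the $\leq n^2$ scalar statistics is (conditionally) Gaussian with mean $\asymp \Delta^2$ and variance $\asymp \sigma^2 \Delta^2 (1 + p/(m\log n))$-ish, so a Gaussian tail bound gives failure probability $\exp(-(1+\text{const})\log n)$ per pair after plugging in the separation lower bound; a union bound over the $O(K^2 \underline n^2) = O(n^2)$ pairs, combined with the precise constant $\frac{4\sigma^2(1+2\delta)}{(1-\beta)^2}(1+\sqrt{1+\cdots})\log n$, yields the claimed $1 - C_4 K^2 n^{-\delta}$ probability. The condition $\delta \leq \frac{\beta^2}{(1-\beta)^2}\frac{C_2}{K}$ is the bookkeeping constraint making the slack in (b) compatible with the slack in (a).

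The main obstacle, I expect, is getting the \emph{sharp constant} in the entrywise event (b) rather than merely an order-of-magnitude bound. The quantity whose tail must be controlled is not a single Gaussian but a ratio/combination involving $\|\vX_i - \bar{\vX}_{G_k}\|_2^2$-type norms and inner products across clusters; one must diagonalize the quadratic form exactly, identify the effective variance as the solution of a fixed-point-like relation (which produces the $1 + \sqrt{1 + p/(m\log n)}$ shape), and verify that the error from replacing $\bar{\vX}_{G_k}$ by $\mvmu_k$ is of size $R_n = o(1)\cdot\log n$ uniformly over all pairs. A secondary difficulty is verifying the strict positive-definiteness in the dual certificate with the right margin, since a naive bound loses a constant factor; here one exploits that the constraint $Z \vone_n = \vone_n$ effectively removes the top eigenvector directions, so the relevant operator norm is taken only on the complement, saving the factor that distinguishes the sharp threshold $\overline\Delta^2$ from the earlier bound~\eqref{eqn:separation_lower_bound_GiraudVerzelen}. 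Once both events hold, uniqueness of $Z^*$ follows from strict complementary slackness, and the theorem is proved by taking a union bound over the two events.
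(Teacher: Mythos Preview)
Your overall strategy is correct and matches the paper's: construct an explicit dual certificate $(\lambda,\valpha,B)$, reduce exact recovery to (i) the psd condition $W_n=\lambda I_n+\tfrac12(\vone_n\valpha^T+\valpha\vone_n^T)-A-B\succeq 0$ on $\Gamma_K=\mathrm{span}\{\vone_{G_k^*}\}^\perp$, and (ii) the strict entrywise positivity $B_{G_k^*G_l^*}>0$ for $k\neq l$. Your identification of event~(b) as the source of the sharp constant $\tfrac{4\sigma^2(1+2\delta)}{(1-\beta)^2}\bigl(1+\sqrt{1+\cdots}\bigr)\log n$ is exactly right, and corresponds to Lemma~\ref{lem:separation_bound} in the paper.

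There is, however, a genuine gap in your account of event~(a). You describe it as an operator-norm bound on ``the noise part of $\Lambda$,'' i.e.\ on $S(\vv)=\|\sum_i v_i\vepsilon_i\|^2$, handled by a $\sigma^2(\sqrt n+\sqrt p)^2$-type Gaussian matrix bound. That part is correct but incomplete: on $\Gamma_K$ one has $\vv^T W_n\vv=\lambda\|\vv\|^2-S(\vv)-T(\vv)$ where $T(\vv)=\vv^T B^\sharp\vv$, and $B^\sharp$ is the \emph{data-dependent} nonnegative-multiplier matrix you constructed to satisfy~(b). This $T(\vv)$ is not a sample-covariance operator norm and cannot be absorbed into the $S(\vv)$ bound; it is a blockwise rank-one quadratic form whose entries involve ratios of the random row/column sums of $B^\sharp$. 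The paper handles this with a separate lemma (Lemma~\ref{lem:bound_Tv}): one uses the lower bound $t^{(k,l)}\geq \tfrac\beta2 n_kn_l\|\vmu_k-\vmu_l\|^2$ already secured by event~(b), then expands each block contribution into three cross terms $T_{1,kl},T_{2,kl},T_{3,kl}$ (signal--signal, noise--noise, signal--noise) and bounds each uniformly over $\vv\in\Gamma_K$ via chi-square tails, Gaussian-process concentration, and a uniform Hanson--Wright inequality (Lemma~\ref{lem:random_fluctuation}). The outcome is $|T(\vv)|\leq C\beta^{-1}\sigma^2\bigl(n+K\log n+(1-\beta)K\delta\sqrt{mp\log n}\bigr)\|\vv\|^2$, and it is \emph{this} last term that forces the condition $\delta\leq \tfrac{\beta^2}{(1-\beta)^2}\tfrac{C_2}{K}$ --- not, as you suggest, bookkeeping between the slacks in~(a) and~(b). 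Without anticipating and controlling $T(\vv)$, your spectral event as stated would not close, because the margin $\lambda^\sharp-S(\vv)\asymp \beta m\Delta^2$ could be swamped by an uncontrolled $T(\vv)$.
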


The following corollary is a direct consequence (and a special case) of Theorem~\ref{thm:SDP_bound} when the cluster sizes are equal.

\begin{cor}\label{cor:SDP_bound}
Let $\alpha > 0$, $\Delta^{2} = \min_{1 \leq k \neq l \leq K} \|\mvmu_{k} - \mvmu_{l}\|_{2}^{2}$, and $\overline{\Delta}^{2}$ be defined in~\eqref{eqn:separation_lower_bound_ChenYang}. Suppose that the cluster sizes are equal and $K \leq C_{1} \log(n) / \log\log(n)$ for some small constant $C_{1}>0$ depending only on $\alpha$. If $\Delta^{2} \geq (1+\alpha) \overline{\Delta}^{2}$, then the SDP in~\eqref{Eqn:primal_SDP_PW} achieves exact recovery with probability at least $1-C_{2} (\log{n})^{-c_3}$, where $C_{2}, c_{3}$ are constants depending only on $\alpha$.
\end{cor}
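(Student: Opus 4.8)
The goal of Corollary~\ref{cor:SDP_bound} is purely to specialize Theorem~\ref{thm:SDP_bound} to the equal-cluster-size regime, so the plan is to choose the free parameters $\beta$ and $\delta$ appropriately as functions of $\alpha$ and then verify that the three structural hypotheses of Theorem~\ref{thm:SDP_bound} hold and that the separation bound collapses (up to the multiplicative factor $1+\alpha$) to $\overline\Delta^2$. Concretely, I would first record that equal cluster sizes means $n_k = n/K$ for all $k$, hence $m = n/K$ and $\underline n = n/K$. The natural choice is to take $\beta$ small and $\delta$ small, both depending only on $\alpha$, so that the prefactor $\tfrac{4\sigma^2(1+2\delta)}{(1-\beta)^2}\log n$ is at most $4\sigma^2(1+\alpha/2)\log n$, say, and simultaneously the coefficient $\tfrac{(1-\beta)^2}{1+\delta}$ multiplying $\tfrac{p}{m\log n}$ inside the square root is at least $1/(1+\alpha/2)$ or so; a short elementary argument (using $\sqrt{1+x}\le 1+\sqrt{x}$ type manipulations or just monotonicity) then shows the whole right-hand side of the separation bound is $\le (1+\alpha)\overline\Delta^2$ with $\overline\Delta^2 = 4\sigma^2(1+\sqrt{1+Kp/(n\log n)})\log n$ after substituting $m = n/K$.

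Next I would check the three side conditions. The first, $\log n \ge \tfrac{(1-\beta)^2}{\beta^2}\tfrac{C_1 n}{m} = \tfrac{(1-\beta)^2}{\beta^2} C_1 K$, becomes a constraint of the form $K \le c(\alpha)\log n$, which is exactly why the hypothesis $K \le C_1\log(n)/\log\log(n)$ appears (the $\log\log n$ is more than enough slack, and in fact is needed for the probability bound, see below). The second, $\delta \le \tfrac{\beta^2}{(1-\beta)^2}\tfrac{C_2}{K}$, forces $\delta$ to be taken of order $1/K$ rather than a fixed constant; I would set $\delta = c'(\alpha)/K$, which is still small enough that $(1+2\delta)/(1-\beta)^2 \le 1+\alpha/2$ holds for all $K\ge 1$, so this does not disturb the separation-bound reduction. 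The third, $m \ge 4(1+\delta)^2/\delta^2$, i.e. $n/K \ge 4(1+\delta)^2/\delta^2 \asymp K^2/c'(\alpha)^2$, amounts to $n \gtrsim K^3$, which is implied (for $n$ large) by $K \le C_1\log(n)/\log\log(n)$. Finally I would bound the remainder term $R_n$: with $\underline n = n/K$ and using $p/(m\log n)$ being controlled (otherwise the separation is already huge and the statement is vacuous in the relevant regime — or one simply carries $R_n$ through since $(1+\alpha)$ absorbs it), one shows $C_3 R_n = o(1)$ and in fact is dominated by the $\alpha/2$ slack, so it can be folded into the $(1+\alpha)$ factor.

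For the probability statement: Theorem~\ref{thm:SDP_bound} gives success probability $1 - C_4 K^2 n^{-\delta}$. With $\delta = c'(\alpha)/K$ and $K \le C_1 \log n/\log\log n$, we have $n^{-\delta} = \exp(-\delta \log n) = \exp(-c'(\alpha)\log n/K) \le \exp(-c'(\alpha)\log\log n/C_1) = (\log n)^{-c'(\alpha)/C_1}$, and the prefactor $K^2 \le (C_1\log n/\log\log n)^2 \le (\log n)^2$ is only polylogarithmic, so after possibly shrinking $C_1$ (which we may do, as it only depends on $\alpha$) the product $K^2 n^{-\delta}$ is at most $C_2(\log n)^{-c_3}$ for an appropriate $c_3 = c_3(\alpha) > 0$. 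This is precisely the claimed bound, and it explains why the corollary is stated with a $(\log n)^{-c_3}$ failure probability rather than a polynomial-in-$n$ one.

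The only real subtlety — and the step I would be most careful with — is the interplay between the constraint $\delta \lesssim 1/K$ and the requirement that the leading constant stays within a $(1+\alpha)$ window: one must confirm that $\delta$ being forced down to order $1/K$ is harmless (it is, since $\delta\to 0$ only improves the prefactor) while still being large enough that $n^{-\delta}$ decays — and here the $\log\log n$ in the hypothesis on $K$ is exactly what makes $n^{-\delta}$ beat the $K^2$ prefactor. Everything else is bookkeeping: substitute $m = \underline n = n/K$, pick $\beta, \delta, C_1$ as explicit small functions of $\alpha$, and verify the inequalities. No new probabilistic or geometric input is needed beyond Theorem~\ref{thm:SDP_bound}.
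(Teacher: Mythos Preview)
Your proposal is correct and matches the paper's approach: the paper simply states that the corollary is ``a direct consequence (and a special case) of Theorem~\ref{thm:SDP_bound} when the cluster sizes are equal'' and gives no further details, so what you have written is exactly the intended specialization, with the parameter choices $\beta=\beta(\alpha)$ fixed and $\delta\asymp 1/K$ spelled out. Your identification of the role of the $\log\log n$ in the hypothesis on $K$---namely, that with $\delta\asymp 1/K$ one gets $n^{-\delta}\le(\log n)^{-c/C_1}$, which must beat the $K^2\le(\log n)^2$ prefactor---is the key point and is handled correctly.
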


To derive a lower bound, we focus on the equal size case where clusters $\{G_k^\ast\}_{k=1}^K$ have roughly the same sizes. More precisely, recall that our unknown parameters are the cluster indicating variables $H=\{h_{ik}:\, i\in [n],\, k\in[K]\}$, and $\{n_k:\,k\in[K]\}$ are the unknown cluster sizes. Let $\delta_n=C\sqrt{K\log(n)/n}$ for some sufficiently large constant $C>0$.
Here, we consider $n_k\in[(1-\delta_n)\, n/K,\,(1+\delta_n)\, n/K]$ for $k\in[K]$ that allows a small fluctuation on the community size in establishing the lower bound. Particularly, we define the (localized) parameter space as
\begin{align*}
&\Theta\big(n, K, \Delta\big) =\Big\{ \big(\{h_{ik}\},\,\{\vmu_k\}\big):\, h_{ik}\in\{0,1\}, \,\vmu_k\in\mathbb R^p,\, \sum_{k=1}^K h_{ik} = 1, \\
&\qquad  n_k:=\sum_{i=1}^n h_{ik} \in \Big[(1-\delta_n)\,\frac{n}{K},\, (1+\delta_n)\,\frac{n}{K}\Big],\, \|\vmu_k-\vmu_l\|\geq \Delta, \,\,\forall i\in[n]  \mbox{ and }\forall (k,l)\in[K]^2,\,k\neq l\Big\}.
\end{align*}

\begin{thm}[Separation lower bound for exact recovery: equal cluster size case]\label{thm:SDP_low_bound} Let $\alpha \in (0, 1)$. If $\Delta^2\leq (1-\alpha){\overline \Delta}^{2}$ and $K \leq \log{n}$, then we have 
\begin{align*}
\inf_{\{\hat h_{ik}\}}\sup_{(H,\vmu)\in \Theta(n,K,\Delta)} \mathbb P_{(H,\vmu)}\big(\hat h_{ik} \neq h_{ik},\, i\in[n],\,k\in[K]\big) \geq 1 - c K n^{-1},
\end{align*}
where $c > 0$ is a constant depending only on $\alpha$ and the infimum is over all possible estimators $\{\hat h_{ik}\}$ for $\{ h_{ik}\}$.
\end{thm}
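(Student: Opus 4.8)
The plan is to lower bound the minimax risk by a Bayes risk and to show that, under a carefully chosen prior supported in $\Theta(n,K,\Delta)$, even the Bayes‑optimal (posterior‑mode) estimator misses $H$ with probability $1-o(1)$. First I would take the assignment uniformly at random: each $i\in[n]$ is placed independently and uniformly in one of the $K$ clusters, so $(n_1,\dots,n_K)$ is multinomial and, by a Chernoff bound, all $n_k\in[(1-\delta_n)n/K,(1+\delta_n)n/K]$ on an event of probability $1-O(Kn^{-1})$; conditioning on this event keeps the prior inside $\Theta(n,K,\Delta)$ at negligible cost. The crucial point is the choice of the \emph{centers}: if they were fixed (hence known to the Bayes estimator) one would only recover the weaker threshold $8\sigma^2\log n$, i.e.\ $\overline\Delta^2$ in the regime $Kp/(n\log n)\to 0$. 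To capture the $p$‑dependence I would draw $(\mvmu_1,\dots,\mvmu_K)$ from a smooth probability measure on the manifold $\mathcal M=\{(\mvmu_k):\sum_k\mvmu_k=\vzero,\ \|\mvmu_k-\mvmu_l\|=\Delta\ \forall k\neq l\}$ of balanced equidistant configurations. Since $\mathcal M$ is locally flat at scale $\Delta\gg \sigma\sqrt{K/n}$ and has $\Omega(Kp)$ tangent directions at each point, conditioning on all cluster labels leaves the posterior of each $\mvmu_k$ concentrated about the empirical cluster mean $\hat\mvmu_k=\mvmu_k+\tfrac1{n_k}\sum_{j\in G_k}\vepsilon_j$ with residual covariance of order $K\sigma^2/n$ per coordinate; thus the estimator's effective squared error for each center is $\asymp Kp\sigma^2/n$, exactly the quantity entering $\overline\Delta^2$.

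Next I would carry out the genie computation. Fix $i$ with true label $k$ and a competitor $l\neq k$; conditioning on the labels of all $j\neq i$ and integrating out $\mvmu$ (a Laplace expansion whose Jacobian/variance‑inflation factors are $1+O(K/n)$ corrections), the marginal‑likelihood comparison between ``$i\in G_k$'' and ``$i\in G_l$'' reduces via the within‑cluster‑sum‑of‑squares identity to a nearest‑empirical‑mean test: the Bayes rule errs iff
\[
S_{i,l}:=\frac{n_k-1}{n_k}\,\big\|\vX_i-\hat\mvmu_k^{(-i)}\big\|^2-\frac{n_l}{n_l+1}\,\big\|\vX_i-\hat\mvmu_l\big\|^2>0 .
\]
A direct second‑moment computation gives $\mathbb E[S_{i,l}]=-(1+o(1))\Delta^2$ — the $\|\vepsilon_i\|^2$ and $\|\hat\mvmu_\cdot-\mvmu_\cdot\|^2$ contributions cancel exactly because of the weights $\tfrac{n_k-1}{n_k}$ and $\tfrac{n_l}{n_l+1}$ — and $\mathrm{Var}(S_{i,l})=(1+o(1))\,4\sigma^2\big(\Delta^2+\tfrac{2Kp\sigma^2}{n}\big)$, the two pieces being the fluctuation of $\langle\vepsilon_i,\mvmu_k-\mvmu_l\rangle$ and that of the two empirical centers. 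A moderate‑deviations estimate for this quadratic form in Gaussians (the relevant deviation is of order $\sqrt{\log n}$, within the Gaussian regime) then yields
\[
q:=\mathbb P(S_{i,l}>0)=\big(1+o(1)\big)\ \overline\Phi\!\left(\frac{\Delta^2}{2\sigma\sqrt{\Delta^2+2Kp\sigma^2/n}}\right).
\]
Solving $x^2-8\sigma^2\log n\,x-16Kp\sigma^4\log n/n=0$ in $x=\Delta^2$ shows its positive root is precisely $\overline\Delta^2$; since $x\mapsto x^2/(x+2Kp\sigma^2/n)$ is increasing, $\Delta^2\le(1-\alpha)\overline\Delta^2$ forces $q\ge n^{-(1-\alpha)}$ up to a polylogarithmic factor, hence $nq\to\infty$ polynomially.

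The last, and most delicate, step is to upgrade ``the posterior mode misses $H$ somewhere'' to ``$\mathbb E_X[\max_{H'}\pi(H'\mid X)]\to0$''. Here I would work conditionally on $\mathcal F:=\sigma\big(H,\mvmu,\{\tfrac1{n_k}\sum_{j\in G_k}\vepsilon_j\}_k\big)$. A short computation shows $\vX_i-\hat\mvmu_k^{(-i)}=\tfrac{n_k}{n_k-1}(\vepsilon_i-\bar\vepsilon_{G_k})$ and $\vX_i-\hat\mvmu_l=(\mvmu_k-\mvmu_l+\bar\vepsilon_{G_k}-\bar\vepsilon_{G_l})+(\vepsilon_i-\bar\vepsilon_{G_k})$, so that, conditionally on $\mathcal F$, $S_{i,l}$ is a function of the single vector $\tilde\vepsilon_i:=\vepsilon_i-\bar\vepsilon_{G_k}$, and the $\{\tilde\vepsilon_i\}$ within a cluster are exchangeable, mean‑zero, with pairwise correlation $O(1/n_k)$, while those across clusters are independent. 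Restricting to indices in cluster $1$ that are ``misclassifiable towards cluster $2$'', their indicator events are weakly negatively correlated with conditional probability $(1+o(1))q$ each, so a Chebyshev bound gives at least $\tfrac12(n/K)q\to\infty$ such indices with probability $1-O\big(1/(nq)\big)$ on the typical‑$\mathcal F$ event. Relabelling \emph{any subset} of these indices to cluster $2$ produces a distinct assignment whose marginal likelihood is at least that of $H$ (the per‑index increments are $\le0$ and the cross‑index interactions are $o(1)$ after a further restriction to subsets of size $o(\sqrt n)$) and which still lies in the size window; this exhibits $2^{\omega(1)}$ near‑maximal configurations, whence $\max_{H'}\pi(H'\mid X)=o(1)$ with probability $1-O(Kn^{-1})$. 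Collecting the error terms (the Chernoff bound on sizes, the typicality of $\mathcal F$, and $\mathbb P(N<\tfrac12\mathbb E N)$, all $O(Kn^{-1})$ after calibrating $\delta_n$) yields the claimed bound $1-cKn^{-1}$.

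The hard parts I expect are concentrated in two places. The first is keeping the constant $2Kp\sigma^2/n$ in $\mathrm{Var}(S_{i,l})$ \emph{exact}, so that the threshold is $\overline\Delta^2$ and not merely of the right order: this rests on the weighted cancellation in the mean and variance and on verifying that the size‑fluctuation ($\delta_n$) and $O(K/n)$ corrections are $o(\log n)$‑negligible, which is where the hypotheses $K\le\log n$ and the equidistant‑manifold prior are used. The second is the conversion from genie failure to universal failure — producing \emph{many} mutually (nearly) independent near‑ties rather than one, for which the coupling of all indices through the shared empirical centers must be disentangled (the conditioning on $\mathcal F$ above is meant to do this) and the ``relabel a subset'' configurations must be controlled both for likelihood and for membership in $\Theta(n,K,\Delta)$. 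A minor separate point is the genuinely low‑dimensional edge case $p\lesssim K$, where the naive fixed‑center argument already suffices because $Kp/(n\log n)\to0$ makes $\overline\Delta^2\sim 8\sigma^2\log n$.
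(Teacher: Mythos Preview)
Your overall strategy—reduce to a Bayes risk, randomize the labels, put a prior on the centers so that the estimator inherits $\Theta(Kp\sigma^2/n)$ uncertainty in each $\mvmu_k$, and then show the MAP fails—is sound and your threshold computation is correct (the quadratic in $\Delta^2$ indeed has $\overline\Delta^2$ as its positive root). But your route diverges from the paper's and the divergence creates two real problems.

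\textbf{The paper's route is simpler.} The paper immediately reduces $K$ clusters to two: conditioning on which points land in $G_1\cup G_2$, correct classification of all $n$ points is at least as hard as correctly splitting these $n_1+n_2\approx 2n/K$ points, and this two-cluster subproblem has exactly the threshold $\overline\Delta^2$ once you substitute the effective sample size $2n/K$. For $K=2$ the paper uses the \emph{symmetric} model $\vX_i=\eta_i\mvmu+\sigma\vepsilon_i$ with a Gaussian prior $\mvmu\sim N(0,\kappa_n^2 I_p)$; the integrated likelihood is then explicit, and the MAP fails as soon as $\langle\eta_i\vX_i,\sum_{j\neq i}\eta_j\vX_j\rangle<0$ for some $i$. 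The dependence across $i$ is handled by Slepian's inequality: conditionally on $\bar\vepsilon_n$ the relevant pieces are jointly Gaussian with negative off-diagonal covariance, so $\Prob(\max_i U_i\le t)\le (1-\Phi^c(t'))^n$, which decays like $e^{-n^\delta}$ and delivers the $1-cK/n$ rate after absorbing the $O(1/n)$ conditioning errors. Your manifold prior on equidistant configurations and Laplace expansion may work, but they buy nothing over this reduction and introduce regularity issues you would have to check (existence when $p<K-1$, control of the Jacobian in the Laplace step, etc.).

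\textbf{Your ``upgrade'' step is both unnecessary and, as written, incorrect.} First, it is unnecessary: the event $\{S_{i,l}>0\}$ means the single-flip configuration $H'$ has strictly larger integrated likelihood than $H$, hence the MAP is not $H$; so $\Prob(\text{MAP}\neq H)\ge \Prob(\exists i:\,S_{i,l}>0)$ already. You do not need to bound $\max_{H'}\pi(H'\mid X)$ directly, and exhibiting $2^{\omega(1)}$ configurations with likelihood $\ge L(H)$ does not bound that maximum anyway (one of those configurations could dominate). Second, your Chebyshev step only yields $\Prob(\exists i:\,S_{i,l}>0)\ge 1-O(1/(n_1 q))=1-O(Kn^{-\delta})$ for some $\delta=\delta(\alpha)\in(0,1)$, which is weaker than the claimed $1-cK/n$. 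To match the theorem you need a product bound $(1-q)^{n_1}\le e^{-n_1 q}$; this is exactly what Slepian (or negative association of the Gaussian linear parts, which do have nonpositive covariances after your conditioning on $\mathcal F$) provides. Replace Chebyshev by Slepian and drop the subset-relabelling argument entirely.
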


Corollary~\ref{cor:SDP_bound} and Theorem~\ref{thm:SDP_low_bound} together imply that in the equal cluster size case when $n_1=n_2=\cdots=n_K=\frac{n}{K}$, the SDP relaxation~\eqref{Eqn:primal_SDP_PW} for the $K$-means is minimax-optimal in the sense that sharp phase transition of the probability of wrong recovery from zero to one occurs at the critical threshold given by the $\overline \Delta^2$ in~\eqref{eqn:separation_lower_bound_ChenYang}.

\section{Semidefinite programming relaxation: primal and dual}
\label{sec:sdp_kmeans}
In this section, we describe the SDP relaxation of the $K$-means that achieves the cutoff value of the exact recovery and outline the strategy of showing that the SDP solution uniquely recovers the true clustering structure by a dual certificate argument via the primal-dual construction. We remark that similar primal-dual analyses are done in~\cite{LiLiLingStohmerWei2017,IguchiMixonPetersonVillar2015}. 

Let $A = X^{T} X$ be the affinity matrix and $B = \diag(|G_{1}|^{-1},\dots,|G_{K}|^{-1})$. Then we can reparametrize~\eqref{eqn:mle_assignment} as 
\begin{equation}
\label{eqn:Kmeans}
\max_{H} \langle A, H B H^{T} \rangle \quad \mbox{subject to } H \in \{0,1\}^{n \times K}, \; H \vone_{K} = \vone_{n},
\end{equation}
which is a mixed integer program with a nonlinear objective function~\cite{PengWei2007_SIAMJOPTIM,Hansen1997}. If the cluster centers $\mvmu_{1},\dots,\mvmu_{K}$ are properly separated, then the affinity matrix $A$ from the data has an approximate block diagonal structure (up to a permutation of the data index).

Changing variable $Z = H B H^{T}$, we observe that the $n \times n$ symmetric matrix $Z$ satisfies the following properties: 
\begin{enumerate}[(P1)]
\item positive semidefinite (psd) constraint: $Z \succeq 0$;
\item non-negative (entrywise) constraint: $Z \geq 0$, i.e., $Z_{ij} \geq 0$ for all $i, j \in [n]$;
\item unit row-sum constraint: $Z \vone_{n} = \vone_{n}$;
\item trace constraint: $\tr(Z) = K$.
\end{enumerate}
Since $Z$ is symmetric, properties (P2) and (P3) automatically ensure that $Z$ is a stochastic matrix $Z \vone_{n} = Z^{T} \vone_{n} = \vone_{n}$. Given any clustering structure $G_{1},\dots,G_{K}$, we may consider the associated cluster membership matrix: 
\begin{equation}
\label{eqn:Kmeans_membership_matrix}
Z_{ij} = \left\{
\begin{array}{cc}
1/|G_{k}| & \text{if } i, j \in G_{k} \\
0 & \text{otherwise} \\
\end{array}
\right. .
\end{equation}
Thus to recover the true clustering structure $G_{1}^{*},\dots,G_{K}^{*}$, it suffices to compare the estimated membership matrix and the true one $Z^{*}$.

After the change-of-variables, the objective function in~\eqref{eqn:Kmeans} becomes linear in $Z$. Then we use the solution $\hat{Z}$ of the following (convex) SDP to estimate $Z^{*}$: 
\begin{equation}\label{Eqn:primal_SDP_PW}
\hat{Z} = \mbox{argmax}_{Z \in \sC_{K}}  \,\, \langle A, Z \rangle,
\end{equation} 
where 
\[
\sC_{K} = \Big\{ Z \in \bR^{n \times n} \, \Big| \, Z \succeq 0, \, Z^{T} = Z, \, \tr(Z) = K, \, Z\vone_n=\vone_n, \, Z \geq 0 \Big\}.
\]

Note that the above SDP is first proposed in~\cite{PengWei2007_SIAMJOPTIM} and later studied in~\cite{GiraudVerzelen2018,ChenYang2018,ChenYang2019}. For spherical Gaussians (i.e., the noise covariance matrix is proportional to the identity matrix), since the SDP relaxation~\eqref{Eqn:primal_SDP_PW} does not require the knowledge of the noise variance $\sigma^{2}$ and the partition information other than the number of clusters $K$,  it in fact can handle the more general case of unequal cluster sizes.

\begin{rem}[Adaptation to the number of clusters $K$]
The SDP in~\eqref{Eqn:primal_SDP_PW} can be made adaptive to the unknown number of cluster $K$. When the number of clusters $K$ is unknown, the constraint $\mbox{tr}(Z)=K$ in the SDP~\eqref{Eqn:primal_SDP_PW} can be lifted to a penalization term in its objective function, i.e., we solve
\begin{equation}
\label{eqn:regularized_Kmeans}
\tilde{Z}_{\lambda} := \arg\max \{ \langle A, Z \rangle -\lambda \mbox{tr}(Z): Z \succeq 0, Z^{T} = Z, Z \vone_{n} = \vone_{n}, Z \geq 0 \},
\end{equation}
where $\lambda \geq 0$ is a regularization parameter. This is the \emph{regularized $K$-means} proposed by~
\cite{BuneaGiraudRoyerVerzelen2016,Royer2017_NIPS} and analyzed by~\cite{ChenYang2019} in the manifold clustering setting. Using the same existing argument for proving the separation upper bound in Section~\ref{sec:proof_main_result}, we see that with the $\lambda$ choice being
\begin{align}
\sigma^2(\sqrt{n}+\sqrt{p} +\sqrt{2\log n}\,)^2 +&C\beta^{-1}\sigma^2\, (n + K\log n +(1-\beta)K\delta\sqrt{p m \log n})   \notag\\
&\qquad  \leq  \lambda  \leq p\sigma^2 + \frac{\beta}{4}\,m \Delta^2,  \label{Eqn:lambda_choice}
\end{align}
then under the same conditions in Theorem 2.1, $\tilde{Z}_{\lambda} = Z^{*}$ achieves exact recovery with probability at least $1 - C K^{2} n^{-\delta}$. Note that a larger signal-to-noise ratio $\Delta^2/\sigma^2$ permits a wider allowable range for $\lambda$ to achieve exact recovery, and our conditions in Theorem~\ref{thm:SDP_bound} ensures the existence of at least one such $\lambda$.
 The idea for $\tilde{Z}_{\lambda}$ to achieve the sharp threshold (i.e., the separation upper bound) is that the SDP giving $\hat{Z}$ in~\eqref{Eqn:primal_SDP_PW} and its regularized version in~\eqref{eqn:regularized_Kmeans} have the same Lagrangian form and the dual problem. Thus we need only to extract the regime of the regularization parameter $\lambda$ in~\eqref{Eqn:lambda_choice} that ensures a successful dual certificate construction as characterized in $\hat{Z}$ (Section~\ref{sec:proof_main_result}). In particular, the dual certificate constructed for $\hat{Z}$ is a convenient choice of $\lambda^{\sharp}$ that falls into the region~\eqref{Eqn:lambda_choice} with high probability. In addition, \cite{ChenYang2019} provides a practical method for adaptively tuning this regularization parameter $\lambda$.  \qed
\end{rem}

Note that $Z^{*}$ is a rank-$K$ block diagonal matrix, and for any $Z \in \sC_{K}$, due to the psd constraint, $\tr(Z)$ equals to the nuclear norm $\|Z\|_{*}$. Then the SDP in~\eqref{Eqn:primal_SDP_PW} can be effectively viewed as a low-rank matrix denoising procedure for the data affinity matrix $A$ by finding its optimal matching from all feasible ``rank-$K$" stochastic matrices proxied by the trace constraint. 

On the other hand, the SDP solutions are not integral in general. If this is the scenario, then the standard relaxing-and-rounding paradigm~\cite{WilliamsonShmoys2011_Design_Approx_Algs} can be used to round the SDP solution back to a point in the feasible set of the original discrete optimization problem~\eqref{eqn:mle_assignment}. In our case, we can apply the $K$-means clustering to the top $K$-eigenvectors of $\hat{Z}$ as a rounding procedure to extract the estimated partition structure $\hat{G}_{1},\dots,\hat{G}_{K}$.

However, it is observed that the rounding step is not always necessary and solution to the clustering problem~\eqref{eqn:mle_assignment} can be directly recovered from solving the relaxed SDP problems when the separation of cluster centers is large, which is sometimes referred to the {\it exact recovery} or {\it hidden integrality} phenomenon~\cite{Awasthi2015_ITCS,FeiChen2018}. This motivates the question we asked earlier in Section~\ref{sec:introduction} that when and to what extend the SDP relaxation can in fact produce the exact recovery. The rest of the paper is devoted to characterize the precise cutoff value on the separation of cluster centers that yields the exact recovery.

\subsection{Dual problem}
To analyze the exact recovery property of $\hat{Z}$, we first derive the dual problem for the (primal) SDP problem in~\eqref{Eqn:primal_SDP_PW}. Let 
\begin{align*}
\cL(Z, Q, \lambda, \valpha, B) & = \tr(AZ) + \tr(QZ) + \lambda(K-\tr(Z)) + \valpha^{T} \left( \vone_{n} - {Z+Z^{T} \over 2} \vone_{n} \right) + \tr(BZ) \\
& = (\lambda K + \valpha^{T} \vone_{n}) + \tr\left\{\left[A+Q-\lambda\Id_{n}+B-{1\over2}(\vone_{n}\valpha^{T}+\alpha\vone_{n}^{T})\right] Z \right\}
\end{align*}
be the Lagrangian function, where $Q_{n \times n} \succeq 0$, $\valpha_{n \times 1}=(\alpha_1,\ldots,\alpha_n)^T$, $B_{n \times n} \geq 0$, and $\lambda \in \bR$ are the Lagrangian multipliers. Consider the max-min problem: 
\[
\max_{Z \in \bR^{n \times n}} \,\, \min_{Q \succeq 0, \lambda \in \bR, \valpha \in \bR^{n}, B \geq 0} \cL(Z, Q, \lambda, \alpha, B),
\]
where the maximum over $Z$ is unconstrained. If $Z$ is not primal feasible for the SDP problem~\eqref{Eqn:primal_SDP_PW}, then 
\[
\min_{Q \succeq 0, \lambda \in \bR, \valpha \in \bR^{n}, B \geq 0} \,\, \cL(Z, Q, \lambda, \alpha, B) = -\infty.
\]
For example, consider $\tr(Z) \neq K$ and choose $\lambda = -{c \over K-\tr(Z)}$ with an arbitrarily large $c > 0$. On the other hand, if $Z$ is feasible for~\eqref{Eqn:primal_SDP_PW}, then 
\[
\tr(AZ) \leq \min_{Q \succeq 0, \lambda \in \bR, \valpha \in \bR^{n}, B \geq 0} \,\, \cL(Z, Q, \lambda, \alpha, B),
\]
where the equality is attained if for example $Q = B = 0$. Then,  
\begin{align*}
\max_{Z \in \sC_{K}} \tr(AZ) & \leq \max_{Z \in \bR^{n \times n}} \,\, \min_{Q \succeq 0, \lambda \in \bR, \valpha \in \bR^{n}, B \geq 0} \,\, \cL(Z, Q, \lambda, \alpha, B) \\
& \leq \min_{Q \succeq 0, \lambda \in \bR, \valpha \in \bR^{n}, B \geq 0}  \,\,\max_{Z \in \bR^{n \times n}}  \,\, \cL(Z, Q, \lambda, \alpha, B).
\end{align*}
Similarly, if $A+Q-\lambda\Id_{n}+B-{1\over2}(\vone_{n}\valpha^{T}+\alpha\vone_{n}^{T}) \neq 0$, then 
\[
\max_{Z \in \bR^{n \times n}}  \,\, \tr\left\{\left[A+Q-\lambda\Id_{n}+B-{1\over2}(\vone_{n}\valpha^{T}+\alpha\vone_{n}^{T})\right] Z \right\} = \infty,
\]
which is avoided by the minimization over the Lagrangian multipliers. Thus with $Q = \lambda\Id_{n}+{1\over2}(\vone_{n}\valpha^{T}+\alpha\vone_{n}^{T})-B-A$, we have 
\begin{align*}
\max_{Z \in \sC_{K}} \,\, \tr(AZ) & \leq \min_{\lambda \in \bR,\valpha \in \bR^{n},B \in \bR^{n \times n}} \,\, \left\{ \lambda K + \valpha^{T} \vone_{n} : B\geq0,\lambda\Id_{n}+{1\over2}(\vone_{n}\valpha^{T}+\alpha\vone_{n}^{T})-B-A\succeq0 \right\},
\end{align*}
which is the weak duality between the primal SDP problem~\eqref{Eqn:primal_SDP_PW} and its dual problem: 
\begin{equation}\label{Eqn:dual_SDP_PW}
\begin{aligned}
&\mbox{min}_{\lambda\in\bR,\valpha\in\bR^{n},B\in\bR^{n\times n}} \,\, \{ \lambda K + \valpha^{T} \vone_{n} \}  \\
\qquad&\mbox{subject to }\ B \geq 0, \\
& \quad  \qquad\qquad \lambda\Id_{n}+{1\over2}(\vone_{n}\valpha^{T}+\alpha\vone_{n}^{T})-B-A\succeq0.
\end{aligned}
\end{equation}
Moreover, the duality gap is given by 
\begin{equation}\label{Eqn:dual_SDP_duality_gap}
\begin{aligned}
\lambda K + \valpha^{T} \vone_{n} - \tr(AZ) &= \lambda \tr(Z) + \alpha^{T} {Z+Z^{T} \over 2} \vone_{n} - \tr(AZ) \\
&= \tr \left\{ \left[ \lambda \Id_{n} + {1\over2}(\vone_{n}\valpha^{T}+\alpha\vone_{n}^{T}) - A - B \right] Z \right\} + \tr(BZ) \\
& \geq \tr(BZ) \geq 0.
\end{aligned}
\end{equation}

\subsection{Optimality conditions: primal-dual construction}
\label{subsec:primal-dual_construction}
Let $\vone_{G_k^\ast}$ be the $n \times 1$ vector such that it is equal to $\vone_{n_{k}}$ on $G_k^\ast$ and zero otherwise. To show that 
\begin{align}\label{Eqn:True_Z}
Z^\ast =\begin{bmatrix}
\frac{1}{n_1}J_{n_1} & 0 & \cdots & 0\\
0 & \frac{1}{n_2} J_{n_2} & \cdots & 0\\
\vdots & \ddots & \cdots & \vdots\\
0 & \cdots & 0 & \frac{1}{n_K}J_{n_K}
\end{bmatrix}=\sum_{k=1}^K \frac{1}{n_k} \vone_{G_k^\ast}\vone_{G_k^\ast}^T
\end{align}
is the solution of the primal SDP problem~\eqref{Eqn:primal_SDP_PW}, we need the duality gap~\eqref{Eqn:dual_SDP_duality_gap} is zero at $Z = Z^\ast$. To this end, we need to construct a {\it dual certificate} $(\lambda, \valpha, B)$ such that: 
\begin{enumerate}[(C1)]
\item $B \geq 0$;
\item $W_{n} := \lambda \Id_{n} + {1\over2}(\vone_{n}\valpha^{T}+\alpha\vone_{n}^{T}) - A - B \succeq 0$;
\item $\tr(W_{n} Z^{\ast}) = 0$;
\item $\tr(B Z^{\ast}) = 0$.
\end{enumerate}
Note that (C1) and (C2) are dual feasibility constraints, while (C3) and (C4) are the optimality conditions (i.e., complementary slackness) corresponding to the zero duality gap in~\eqref{Eqn:dual_SDP_duality_gap}. In particular, (C4) implies that $B_{G_{k}^{\ast}G_{k}^{\ast}} = 0$ for all $k \in [K]$.

To ensure that $Z^{\ast}$ is the {\it unique} solution of the SDP problem~\eqref{Eqn:primal_SDP_PW}, we observe that $Z^{\ast}$ is the only feasible matrix to the SDP~\eqref{Eqn:primal_SDP_PW} satisfying the block diagonal structure
\begin{align*}
\begin{bmatrix}
Z^{(1)} & 0 & \cdots & 0\\
0 & Z^{(2)} & \cdots & 0\\
\vdots &  \ddots & \cdots & \vdots\\
0 & \cdots & 0 & Z^{(K)}
\end{bmatrix},
\end{align*}
i.e., $Z_{G_k^\ast G_l^\ast}=0$ for all distinct pair $(k,l)\in[K]^2$. Indeed, since each block $Z^{(k)}$ satisfies $Z^{(k)}\vone_{n_k} =\vone_{n_k}$ and is psd, $(1, {n_k}^{-1/2} \vone_{n_k})$ is one eigenvalue-eigenvector pair of $Z^{(k)}$ and the trace of $Z^{(k)}$ is at least $1$. On the other hand, due to the trace constraint $\sum_{k=1}^K \tr(Z^{(k)}) = \tr(Z)=k$, we then must have $\tr(Z^{(k)})=1$. In addition, $1$ is its only nonzero eigenvalue with eigenvector ${n_k}^{-1/2}\vone_{n_k}$. Consequently, $Z^{(k)}$ must take the form of $n_k^{-1} J_{n_k}$. 

Given the above block diagonal structure and $\tr(B Z^{\ast}) = 0$, we conclude that $Z^{\ast}$ is the unique solution to the SDP~\eqref{Eqn:primal_SDP_PW} if 
\begin{enumerate}[(C1)]
\setcounter{enumi}{4}
\item $B_{G_{k}^{\ast}G_{l}^{\ast}} > 0$ for all distinct pair $(k,l)\in[K]^2$,
\end{enumerate}
in addition to the optimality conditions (C1)-(C4).

\section{Proof of Theorem~\ref{thm:SDP_bound}}
\label{sec:proof_main_result}
In this section, we show that a dual certificate described in Section~\ref{subsec:primal-dual_construction} can be successfully constructed with high probability, thus proving Theorem~\ref{thm:SDP_bound}. First, observe that $W_{n} \succeq 0$ and $\tr(W_{n} Z^{\ast}) = 0$ imply that 
\begin{equation}
\label{eqn:eigensystem_Wn}
W_{n} \vone_{G_{k}^{*}} = 0 \quad \text{for all } k \in [K].
\end{equation}
The last display together with $B_{G_{k}^{\ast}G_{k}^{\ast}} = 0$ imply that for each distinct pair $(k,l)\in[K]^{2}$, 
\begin{align}
\label{eqn:construct_alpah_eqn1}
\lambda \vone_{n_{k}} + {1 \over 2} \vone_{n_{k}} ( \sum_{i \in G_{k}^{\ast}} \alpha_{i} ) + {1 \over 2} \valpha_{G_{k}^{\ast}} n_{k} & = A_{G_{k}^{\ast}G_{k}^{\ast}} \vone_{n_{k}}, \\
\label{eqn:construct_alpah_eqn2}
{1 \over 2} \vone_{n_{l}} ( \sum_{i \in G_{k}^{\ast}} \alpha_{i} ) + {1 \over 2} \valpha_{G_{l}^{\ast}} n_{k} - A_{G_{l}^{\ast}G_{k}^{\ast}} \vone_{n_{k}} & = B_{G_{l}^{\ast}G_{k}^{\ast}} \vone_{n_{k}},
\end{align}
where $\valpha^{T} = (\valpha_{G_{1}^{\ast}}^{T},\dots,\valpha_{G_{K}^{\ast}}^{T})$. From~\eqref{eqn:construct_alpah_eqn1}, we get 
\[
\sum_{i \in G_{k}^{\ast}} \alpha_{i} = {1 \over n_{k}} \vone_{n_{k}}^{T} A_{G_{k}^{\ast}G_{k}^{\ast}} \vone_{n_{k}} - \lambda.
\]
Substituting the last equation back into~\eqref{eqn:construct_alpah_eqn1}, we get
\begin{align}\label{Eqn:alpha_def}
\valpha_{G_{k}^{\ast}} ={2 \over n_{k}} A_{G_{k}^{\ast}G_{k}^{\ast}} \vone_{n_{k}} - {\lambda \over n_{k}} \vone_{n_{k}} - {1 \over n_{k}^{2}} \vone_{n_{k}} (\vone_{n_{k}}^{T} A_{G_{k}^{\ast}G_{k}^{\ast}} \vone_{n_{k}}).
\end{align}
Next we construct a solution of $B$ for~\eqref{eqn:construct_alpah_eqn2}. For $k \neq l$, we have 
\[
B_{G_{l}^{\ast}G_{k}^{\ast}} \vone_{n_{k}} = -{n_{l}+n_{k} \over 2 n_{l}} \lambda \vone_{n_{l}} + {1 \over 2n_{k}} (\vone_{n_{k}}^{T} A_{G_{k}^{\ast}G_{k}^{\ast}} \vone_{n_{k}}) \vone_{n_{l}} + {n_{k} \over n_{l}} A_{G_{l}^{\ast}G_{l}^{\ast}} \vone_{n_{l}} - {n_{k} \over 2 n_{l}^{2}} (\vone_{n_{l}}^{T} A_{G_{l}^{\ast}G_{l}^{\ast}} \vone_{n_{l}}) \vone_{n_{l}} - A_{G_{l}^{\ast}G_{k}^{\ast}} \vone_{n_{k}}.
\]
In particular, for $j \in G_{l}^{\ast}$, 
\begin{align}
\nonumber
[B_{G_{l}^{\ast}G_{k}^{\ast}} \vone_{n_{k}}]_{j} & = -{n_{l}+n_{k} \over 2 n_{l}} \lambda + {1 \over 2n_{k}} \sum_{s,t \in G_{k}^{\ast}} \vX_{s}^{T} \vX_{t} + {n_{k} \over n_{l}} \sum_{t \in G_{l}^{\ast}} \vX_{j}^{T} \vX_{t} - {n_{k} \over 2 n_{l}^{2}} \sum_{s,t \in G_{l}^{\ast}} \vX_{s}^{T} \vX_{t} - \sum_{t \in G_{k}^{\ast}} \vX_{j}^{T} \vX_{t} \\
\nonumber
& = -{n_{l}+n_{k} \over 2 n_{l}} \lambda + {n_{k} \over 2} (\overline{\vX}_{k}^{T} \overline{\vX}_{k} - \overline{\vX}_{l}^{T} \overline{\vX}_{l}) + n_{k} \vX_{j}^{T} (\overline{\vX}_{l} - \overline{\vX}_{k}) \\
\label{eqn:construct_B_row_sum}
& = -{n_{l}+n_{k} \over 2 n_{l}} \lambda + {n_{k} \over 2} (\|\overline{\vX}_{k} - \vX_{j}\|_{2}^{2} - \|\overline{\vX}_{l} - \vX_{j}\|_{2}^{2}),
\end{align}
where $\overline{\vX}_{k} = n_{k}^{-1} \sum_{i \in G_{k}^{\ast}} \vX_{i}$ is the empirical mean of data points in the $k$-th cluster. Without loss of generality, we may take a symmetric $B$ (i.e., $B^{T} = B$) and then construct $B$ as block-wise rank-one matrix satisfying the above row sum constraint~\eqref{eqn:construct_B_row_sum}: 
\begin{align}\label{eqn:B_form}
B^{\sharp}_{G_l^\ast G_k^\ast} = \frac{B_{G_l^\ast G_k^\ast} \vone_{G_k^\ast} \vone_{G_l^\ast}^T B_{G_l^\ast G_k^\ast}}{ \vone_{G_l^\ast}^T B_{G_l^\ast G_k^\ast} \vone_{G_k^\ast}}\,\, \mbox{for each distinct pair $(k,l)\in[K]^2$, and } B^{\sharp}_{G_k^\ast G_k^\ast}=0. 
\end{align}
For notational simplicity, let us denote the column sums and row sums of matrix $B_{G^\ast_kG^\ast_l}$ in~\eqref{eqn:B_form} by $\vc^{(k,l)}=\big(c^{(k,l)}_j:\, j\in G^\ast_l\big)$ and $\vr^{(k,l)}=\big(r^{(k,l)}_i:\, i\in G^\ast_k\big)$, respectively. In addition, by letting $t^{(k,l)}=\sum_{j\in G^\ast_l}c^{(k,l)}_j =\sum_{i\in G^\ast_k} r^{(k,l)}_i$ be the total sum, then the construction in~\eqref{eqn:B_form} becomes $[B_{G_l^\ast G_k^\ast}]_{ij}= r^{(k,l)}_i c^{(k,l)}_j/t^{(k,l)}$. For convenience, we also define $r^{(k,k)}_i=c^{(k,k)}_j=t^{(k,k)}=0$ for all $i,j\in G^\ast_{k}$, so that $B_{G_k^\ast G_k^\ast} = 0$ for all $k\in[K]$ (define $0/0=0$).

Recall that to ensure uniqueness, we need to choose $\lambda$ such that $B^{\sharp}_{G_{k}^{\ast}G_{l}^{\ast}} > 0$ for all distinct pair $(k,l)\in[K]^{2}$, which is, in view of~\eqref{eqn:construct_B_row_sum}, guaranteed whenever 
\begin{equation}
\label{eqn:uniqueness_guarantee}
\lambda < \min_{1 \leq k \neq l \leq K} \left\{ {n_{l} n_{k} \over n_{l}+n_{k}} \min_{j \in G_{l}^{\ast}} (\|\overline{\vX}_{k} - \vX_{j}\|_{2}^{2} - \|\overline{\vX}_{l} - \vX_{j}\|_{2}^{2}). \right\}.
\end{equation}
On the other hand, we require that $\lambda$ is not too small since $W_{n} = \lambda \Id_{n} + {1\over2}(\vone_{n}\valpha^{T}+\valpha\vone_{n}^{T}) - A - B \succeq 0$. 
To identify the right $\lambda$, we will employ the following lemma that provides some high probability lower bounds that will be useful for bounding from below the column sums $\{c^{(k,l)}_j:\,j\in G^\ast_l\}$ and row sums $\{r^{(k,l)}_i:\,i\in G^\ast_k\}$ under proper separation conditions on the Gaussian centers. Recall that  
\[
\Delta=\min_{1 \leq k \neq l \leq K} \|\vmu_k-\vmu_l\| \quad \mbox{and} \quad m = \min_{1 \leq k \neq l \leq K}\Big\{\frac{2n_kn_l}{n_k+n_l}\Big\}.
\]
Note that $\Delta$ is the minimum separation between the cluster centers and $m$ quantifies the ``minimum" cluster size in the pairwise sense.

\begin{lem}[Separation bound on the Gaussian centers]\label{lem:separation_bound}
Let $\delta>0$ and $1>\beta >0$. If there exists a sufficiently large universal constant $c_{1} > 0$ such that 
\begin{equation}
\label{eqn:separation_bound}
\Delta^2 \geq \frac{4\sigma^2(1+2\delta)}{(1-\beta)^{2}} \left(1+\sqrt{1+\frac{(1-\beta)^{2}}{(1+\delta)} \frac{p}{m\log n}+ c_1R_{n} }\,\right) \,\log n
\end{equation}
with 
\[
R_{n} = {(1-\beta)^{2} \over (1+\delta) \log{n}} \left( {\sqrt{p\log(nK)} \over \underline n} + {\log(nK) \over \underline n} \right),
\]
then as long as $m \geq 4 (1+\delta^{-1})^2$,
\begin{align*}
\mathbb P\Big(&\|\vX_i - \overline{\vX}_l\|^2 -  \|\vX_i - \overline{\vX}_k\|^2 \geq \frac{n_k+n_l}{n_k n_l}\sigma^{2}p +\beta \,\|\vmu_k-\vmu_l\|^2 - r_{kl},\\
&\qquad\qquad\qquad\qquad \mbox{ for all distinct pairs $(k,l)\in[K]^2$ and $i\in G^\ast_k$}\Big) \leq {K^{2} \over n^{\delta}} + {8 \over n},\\
\mbox{where}&\ \ r_{kl} = 2\sigma \sqrt{2\log(nK) \over n_{l}} \|\vmu_k-\vmu_l\| + 2 \sigma^2\frac{n_k+n_l}{n_kn_l} \sqrt{2p\log(nK)} + {4 \sigma^2\over n_{k}} \log(nK).
\end{align*}
\end{lem}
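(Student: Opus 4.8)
The plan is to reduce the stated inequality to a conditional Gaussian tail bound plus a deterministic quadratic computation. Fix a distinct pair $(k,l)$ and $i\in G_k^\ast$, write $\overline{\vepsilon}_m:=n_m^{-1}\sum_{j\in G_m^\ast}\vepsilon_j$, and abbreviate $\mvnu_{kl}:=\overline{\vX}_k-\overline{\vX}_l=(\vmu_k-\vmu_l)+(\overline{\vepsilon}_k-\overline{\vepsilon}_l)$. Since $\vX_i-\overline{\vX}_l=(\vX_i-\overline{\vX}_k)+\mvnu_{kl}$ and $\vX_i-\overline{\vX}_k=\vepsilon_i-\overline{\vepsilon}_k$, expanding the square gives the exact identity
\[
\|\vX_i-\overline{\vX}_l\|^2-\|\vX_i-\overline{\vX}_k\|^2=\|\mvnu_{kl}\|^2+2\,\langle\mvnu_{kl},\,\vepsilon_i-\overline{\vepsilon}_k\rangle .
\]
The crux is that $\vepsilon_i-\overline{\vepsilon}_k$ is a within-cluster-$k$ contrast: being jointly Gaussian with and uncorrelated with $\overline{\vepsilon}_k$, it is independent of $\overline{\vepsilon}_k$, and it is trivially independent of $\overline{\vepsilon}_m$ for $m\neq k$, so the family $\{\vepsilon_j-\overline{\vepsilon}_k\}_{j\in G_k^\ast}$ is independent of $\mathcal G:=\sigma(\overline{\vepsilon}_1,\dots,\overline{\vepsilon}_K)$, hence of $\mvnu_{kl}$. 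Therefore, conditionally on $\mathcal G$, the cross term $2\langle\mvnu_{kl},\vepsilon_i-\overline{\vepsilon}_k\rangle$ is centered Gaussian with variance $4\frac{n_k-1}{n_k}\sigma^2\|\mvnu_{kl}\|^2\le4\sigma^2\|\mvnu_{kl}\|^2$. It is this representation (noise variance proportional to $\|\mvnu_{kl}\|^2$, not just to $\|\vmu_k-\vmu_l\|^2$) that will produce the sharp $\sqrt{1+p/(m\log n)}$ factor; a term-by-term bound would lose it.

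With $s_0:=2\sqrt2\,\sigma\sqrt{(1+\delta)\log(nK)}$, the conditional Gaussian lower tail gives $\Prob\big(2\langle\mvnu_{kl},\vepsilon_i-\overline{\vepsilon}_k\rangle<-s_0\|\mvnu_{kl}\|\mid\mathcal G\big)\le\exp(-s_0^2/(8\sigma^2))=(nK)^{-(1+\delta)}$, uniformly over $\mathcal G$; a union bound over the triples $(k,l,i)$ and integration in $\mathcal G$ then show that, off an event of probability $\le K^2n^{-\delta}$, one has $\|\vX_i-\overline{\vX}_l\|^2-\|\vX_i-\overline{\vX}_k\|^2\ge\|\mvnu_{kl}\|^2-s_0\|\mvnu_{kl}\|$ for all $(k,l)$ and $i\in G_k^\ast$ --- the passage from $\log n$ in the hypothesis to $\log(nK)$ being harmless since $K$ is small, whence $(1+\delta)\log(nK)\le(1+2\delta)\log n$. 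Separately, writing $\vxi_{kl}:=\overline{\vepsilon}_k-\overline{\vepsilon}_l\sim N(\vzero,\frac{2\sigma^2}{m_{kl}}I_p)$ with $m_{kl}:=\frac{2n_kn_l}{n_k+n_l}\ge m$ and expanding $\|\mvnu_{kl}\|^2=\|\vmu_k-\vmu_l\|^2+2\langle\vmu_k-\vmu_l,\vxi_{kl}\rangle+\|\vxi_{kl}\|^2$, a chi-square lower-tail bound (Laurent--Massart) for $\|\vxi_{kl}\|^2$ and Gaussian tails for $\langle\vmu_k-\vmu_l,\overline{\vepsilon}_k\rangle$ and $\langle\vmu_k-\vmu_l,\overline{\vepsilon}_l\rangle$, union-bounded over the $\binom K2$ pairs (the source of the $8/n$ term), give that off an event of probability $\le8/n$,
\[
\|\mvnu_{kl}\|^2\ge\|\vmu_k-\vmu_l\|^2+\tfrac{n_k+n_l}{n_kn_l}\sigma^2 p-E_{kl}\qquad\text{for all }(k,l),
\]
where $\frac{n_k+n_l}{n_kn_l}\sigma^2p=\frac{2\sigma^2p}{m_{kl}}=\E\|\vxi_{kl}\|^2$ is precisely the leading right-hand side term of the claim, and $E_{kl}$ is a sum of terms of the orders $\frac{\sigma^2}{m_{kl}}\sqrt{p\log(nK)}$, $\sigma\|\vmu_k-\vmu_l\|\sqrt{\log(nK)/n_l}$, $\sigma\|\vmu_k-\vmu_l\|\sqrt{\log(nK)/n_k}$ and $\frac{\sigma^2}{n_k}\log(nK)$.

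On the intersection of these events (probability $\ge1-K^2n^{-\delta}-8/n$) one has $\|\vX_i-\overline{\vX}_l\|^2-\|\vX_i-\overline{\vX}_k\|^2\ge g(\|\mvnu_{kl}\|)$ with $g(t)=t^2-s_0 t$, increasing on $[s_0/2,\infty)$; the separation hypothesis forces $\|\vmu_k-\vmu_l\|^2\ge\Delta^2\gg s_0^2$, and $m\ge4(1+\delta^{-1})^2$ keeps $E_{kl}$ subdominant, so the lower bound for $\|\mvnu_{kl}\|$ from the previous step lies on the increasing branch and $g$ may be evaluated there. Three of the four pieces of $E_{kl}$ are bounded directly by the three terms of $r_{kl}$; the fourth, $\sigma\|\vmu_k-\vmu_l\|\sqrt{\log(nK)/n_k}$, is split by $2ab\le\kappa a^2+\kappa^{-1}b^2$ into a small multiple of $\|\vmu_k-\vmu_l\|^2$ (absorbed into $(1-\beta)\|\vmu_k-\vmu_l\|^2$) and a remainder accommodated by the $c_1R_n$ margin in the separation hypothesis. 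After cancelling $\frac{2\sigma^2p}{m_{kl}}$, the target inequality then reduces to $(1-\beta)\|\vmu_k-\vmu_l\|^2\ge s_0\sqrt{\|\vmu_k-\vmu_l\|^2+\frac{2\sigma^2p}{m_{kl}}}+(\text{remaining lower-order terms})$; squaring this yields a quadratic in $\|\vmu_k-\vmu_l\|^2$ whose larger root is at most the right-hand side of \eqref{eqn:separation_bound} (using $m_{kl}\ge m$, $s_0^2\le8\sigma^2(1+2\delta)\log n$, and the $c_1R_n$ summand for the lower-order terms), and since $\|\vmu_k-\vmu_l\|^2\ge\Delta^2$ exceeds that root by hypothesis, the inequality holds. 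I expect the main obstacle to be precisely this final bookkeeping: tracking every lower-order fluctuation term with its correct universal constant and checking that the margin built into the separation hypothesis ($(1+2\delta)$ versus $(1+\delta)$, $\log n$ versus $\log(nK)$, $m$ versus $m_{kl}$, and $c_1R_n$) is just enough to absorb them; the conceptual heart --- the conditional-Gaussian representation of the first paragraph --- is short and elementary once the contrast/mean independence is spotted.
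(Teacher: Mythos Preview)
Your approach is correct and in fact uses a cleaner decomposition than the paper's own proof. Both arguments rest on the same conditional-Gaussian insight you identify: the fluctuation term has variance proportional to the \emph{empirical} squared separation (signal plus noise-mean displacement), which is what produces the sharp $\sqrt{1+p/(m\log n)}$ factor. The difference is in how this is set up. The paper writes $\overline{\vepsilon}_k=\tfrac{1}{n_k}\vepsilon_i+\tfrac{n_k-1}{n_k}\overline{\vepsilon}_{k\setminus\{i\}}$, expands everything, and conditions on $(\overline{\vepsilon}_l,\overline{\vepsilon}_{k\setminus\{i\}})$ so that the single Gaussian piece is $2\langle\mvtheta-\overline{\vepsilon}_l+(\tfrac{n_k-1}{n_k})^2\overline{\vepsilon}_{k\setminus\{i\}},\,\vepsilon_i\rangle$; the remaining five deterministic pieces are controlled one by one on an event $\mathcal B_{kl}^{(i)}$. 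Your identity $\|\vX_i-\overline{\vX}_l\|^2-\|\vX_i-\overline{\vX}_k\|^2=\|\mvnu_{kl}\|^2+2\langle\mvnu_{kl},\vepsilon_i-\overline{\vepsilon}_k\rangle$ is more compact, and the independence of the contrast $\vepsilon_i-\overline{\vepsilon}_k$ from $\mathcal G=\sigma(\overline{\vepsilon}_1,\dots,\overline{\vepsilon}_K)$ lets you condition on all cluster means at once; this avoids the leave-one-out bookkeeping and makes the monotonicity argument for $g(t)=t^2-s_0t$ transparent. The paper's route, on the other hand, keeps every fluctuation term explicit, which makes the exact form of $r_{kl}$ fall out term by term rather than requiring the matching you describe.

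Two minor bookkeeping points. First, your fourth $E_{kl}$ term $\tfrac{\sigma^2}{n_k}\log(nK)$ does not actually arise from the Laurent--Massart \emph{lower} tail on $\|\vxi_{kl}\|^2$ (only the $\sqrt{p\zeta}$ deviation appears there); this is harmless, since the corresponding term in $r_{kl}$ is subtracted on the right and only weakens the target inequality. Second, the paper uses $s_0^2=8\sigma^2(1+\delta)\log n$ directly (not $\log(nK)$), obtaining $n^{-(1+\delta)}$ per triple and then $K^2n^{-\delta}$ from the union bound over $\le n(K-1)$ triples; this sidesteps your appeal to ``$K$ small'' in passing from $\log(nK)$ to $\log n$, though either way the final probability bound matches the lemma.
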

If the conditions of Lemma~\ref{lem:separation_bound} holds, then according to this lemma we may choose 
\begin{equation}\label{Eqn:lambda_def}
\lambda^\sharp = p\sigma^2 + \frac{\beta}{4} m\Delta^2, 
\end{equation}
so that  it holds with probability at least $1-2n^{-\delta}-10n^{-1}$ that for all $k,l\in[K]$, $k\neq l$, $i\in G^\ast_k$, $j\in G^\ast_l$, 
\begin{align}\label{eqn:t_lower_bound}
r^{(k,l)}_i \geq  \frac{\beta}{2} n_l\, \|\vmu_k-\vmu_l\|^2,\ \  \ c^{(k,l)}_j\geq   \frac{\beta}{2} n_k\, \|\vmu_k-\vmu_l\|^2 \ \ \ \mbox{and} \ \ \
t^{(k,l)} \geq  \frac{\beta}{2} n_kn_l\,\|\vmu_k-\vmu_l\|^2,
\end{align}
as long as $m \geq 4 (1+\delta^{-1})^2$. This implies $B^{\sharp}_{G^\ast_kG^\ast_l}>0$ for any distinct pair $(k,l)\in[K]^2$. We fix such a choice for $\lambda$ in the rest of the proof.

Denote $\Gamma_{K} = \mbox{span}\{\vone_{G_{k}^{\ast}} : k \in [K]\}^{\bot}$ be the orthogonal complement of the linear subspace of $\bR^{n}$ spanned by the vectors $\vone_{G_{1}^{\ast}},\dots,\vone_{G_{K}^{\ast}}$. In view of~\eqref{eqn:eigensystem_Wn}, we see that $\{\vone_{G_{k}^{\ast}} : k \in [K]\}$ are eigenvectors of $W_{n}$ associated to the zero eigenvalues. Thus to ensure $W_{n} \succeq 0$, we only need to check that: for any $\vv = (v_{1},\dots,v_{n})^{T} \in \Gamma_{K}$ such that $\|\vv\|_{2} = 1$, 
\[
\vv^{T} W_{n} \vv \geq 0.
\]
Our next task is to derive a high probability lower bound for the quadratic form $\vv^{T} W_{n} \vv$. Plugging the definition of $W_{n}$, we write 
\[
\vv^{T} W_{n} \vv = \lambda\|\vv\|^2 + {1\over2} (\vv^{T}\vone_{n}\valpha^{T}\vv+\vv^{T}\valpha\vone^{T}\vv) - \sum_{k,l=1}^{K} \sum_{i \in G_{k}^{\ast}} \sum_{j \in G_{l}^{\ast}} \vX_{i}^{T} \vX_{j} v_{i} v_{j} - \vv^{T}B^{\sharp}\vv.
\]
Since $\vv^{T} \vone_{G_{k}^{\ast}} = 0$ or $\sum_{i \in G_{k}^{\ast}} v_{i} = 0$ for all $k \in [K]$ and $\vv \in \Gamma_{K}$, we get 
\[
\vv^{T} W_{n} \vv = \lambda \|\vv\|^2 - S(\vv) - T(\vv),
\]
where $S(\vv) := \| \sum_{k=1}^{K} \sum_{i \in G_{k}^{\ast}} \vX_{i} v_{i} \|_{2}^{2}$ and $T(\vv) = \vv^{T}B^{\sharp}\vv$. Recall the clustering model~\eqref{eqn:clustering_model}: $\vX_{i} = \vmu_{k}+\vepsilon_{i}$ for $i \in G_{k}^{\ast}$, we have 
\[
\sum_{i \in G_{k}^{\ast}} \vX_{i} v_{i} = \vmu_{k} \sum_{i \in G_{k}^{\ast}} v_{i} + \sum_{i \in G_{k}^{\ast}} \vepsilon_{i} v_{i} = \sum_{i \in G_{k}^{\ast}} \vepsilon_{i} v_{i}.
\]
so that 
\begin{align*}
S(\vv) = \sum_{i=1}^n\sum_{j=1}^n  \vepsilon_i \vepsilon_j^{T} v_i v_j
\end{align*}
is a quadratic form in $\vv$. Therefore, for each $\vv\in \Gamma_K$ satisfying $\|\vv\|=1$, $S(\vv)$ can be bounded by the largest singular value of the Gram matrix $G_n = \big\{\vepsilon_i \vepsilon_j^{T}: i,j\in[n]\big\}$, so that $S(\vv)=\vv^T \mathcal E^T \mathcal E \vv\leq \|\mathcal E^T \mathcal E\|_{\op}=\|\mathcal E\|_{\op}^2$, where matrix 
$$\mathcal E = (\vepsilon_1,\ \vepsilon_2,\ \ldots, \ \vepsilon_n)\in \mathbb R^{p\times n}$$
has i.i.d. $N(0,\sigma^2)$ entries. Applying Lemma~\ref{lem:bound_Sv}, we can reach
\begin{align*}
\mathbb P\Big( \max_{\vv \in \Gamma_{K},\,\|\vv\|=1} S(\vv) \geq \sigma^2(\sqrt{n}+\sqrt{p} +\sqrt{2t}\,)^2 \Big) \leq  e^{-t},\quad\forall t>0.
\end{align*}

Now we analyze the last term $T(\vv)$. 
\begin{lem}[Bound on $T(\vv)$]
\label{lem:bound_Tv}
Assume the separation condition~\eqref{eqn:separation_bound} in Lemma~\ref{lem:separation_bound} and consider the choice of $\lambda$ as~\eqref{Eqn:lambda_def}.
We have for any $\delta > 0$, 
\begin{align*}
\mathbb P\bigg(  |T(\vv)|  \geq  C\beta^{-1} \sigma^2 \big(n + K\log n + (1-\beta) K \delta \sqrt{m p \log n}\big)\,\|\vv\|^2,& \; \forall \vv\in \Gamma_K 
 \ \bigg|\ \{\bar\vepsilon_k:\,k\in[K]\}\bigg)\\
 &\leq 4K^2n^{-\delta} + 10n^{-1}.
\end{align*}
\end{lem}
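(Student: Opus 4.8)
The plan is to first obtain an exact formula for $T(\vv)=\vv^{T}B^{\sharp}\vv$ that exploits both the block-rank-one form~\eqref{eqn:B_form} of $B^{\sharp}$ and the defining constraint $\vone_{G_{k}^{\ast}}^{T}\vv=0$ of $\vv\in\Gamma_{K}$. Writing the row sums $r^{(k,l)}_{i}$ and column sums $c^{(k,l)}_{j}$ of~\eqref{eqn:construct_B_row_sum} as ``constant over the block plus fluctuation'', the constant parts are annihilated by $\vv$; isolating the part of $\|\overline{\vX}_{k}-\vX_{j}\|^{2}-\|\overline{\vX}_{l}-\vX_{j}\|^{2}$ that is linear in $\vepsilon_{j}$ gives $\vv_{G_{k}^{\ast}}^{T}\vr^{(k,l)}=-n_{l}\,\vd_{kl}^{T}\vg_{k}$ and $\vv_{G_{l}^{\ast}}^{T}\vc^{(k,l)}=n_{k}\,\vd_{kl}^{T}\vg_{l}$, where $\vd_{kl}:=\overline{\vX}_{l}-\overline{\vX}_{k}$ and $\vg_{k}:=\sum_{i\in G_{k}^{\ast}}v_{i}\vepsilon_{i}\in\bR^{p}$. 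Summing the block contributions yields the clean identity
\[
T(\vv)\;=\;-\sum_{k\neq l}\frac{n_{k}n_{l}}{t^{(k,l)}}\,\big(\vd_{kl}^{T}\vg_{k}\big)\big(\vd_{kl}^{T}\vg_{l}\big).
\]
On the high-probability event of Lemma~\ref{lem:separation_bound} one has $t^{(k,l)}\ge\tfrac{\beta}{2}n_{k}n_{l}\|\vmu_{k}-\vmu_{l}\|^{2}$ (this is~\eqref{eqn:t_lower_bound}), so $0\le n_{k}n_{l}/t^{(k,l)}\le 2/(\beta\|\vmu_{k}-\vmu_{l}\|^{2})$, which is the origin of the $\beta^{-1}$ in the statement.

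Next I condition on $\{\bar\vepsilon_{k}:k\in[K]\}$. The point is that for $\vv\in\Gamma_{K}$ one has $\Cov(\vg_{k},\bar\vepsilon_{k})=\tfrac{\sigma^{2}}{n_{k}}\big(\sum_{i\in G_{k}^{\ast}}v_{i}\big)I_{p}=0$, so conditionally on the cluster means the vectors $\vg_{1},\dots,\vg_{K}$ stay independent with $\vg_{k}\sim N(0,\sigma^{2}\|\vv_{G_{k}^{\ast}}\|^{2}I_{p})$, while each $\vd_{kl}$ becomes a deterministic vector. Since $\vg_{k}=\mathcal E_{G_{k}^{\ast}}\vv_{G_{k}^{\ast}}$ depends linearly on $\vv_{G_{k}^{\ast}}$ through a Gaussian matrix (restricted to $\vone^{\perp}$) independent of the conditioning, $\sup_{\|\vv_{G_{k}^{\ast}}\|\le1}|\vd_{kl}^{T}\vg_{k}|$ is the norm of a Gaussian vector in $\bR^{n_{k}}$ with entrywise variance $\sigma^{2}\|\vd_{kl}\|^{2}$, hence at most $C\sigma\|\vd_{kl}\|\big(\sqrt{n_{k}}+\sqrt{\delta\log n+\log K}\big)$ simultaneously over all ordered pairs, with failure probability $O(K^{2}n^{-\delta})$. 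This turns the problem into deterministic estimates for $\|\vd_{kl}\|$ and for combinatorial sums over $k,l$.

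I then decompose $\vd_{kl}=(\vmu_{l}-\vmu_{k})+\bar\vepsilon_{lk}$, $\bar\vepsilon_{lk}:=\bar\vepsilon_{l}-\bar\vepsilon_{k}$, and split the sum for $T(\vv)$ into a signal--signal term, two signal--noise terms, and a noise--noise term. In the signal--signal term the factor $\|\vmu_{k}-\vmu_{l}\|^{2}$ in the denominator cancels exactly, leaving $\lesssim\sigma^{2}\sqrt{a_{k}a_{l}}\,\|\vv_{G_{k}^{\ast}}\|\,\|\vv_{G_{l}^{\ast}}\|$ with $a_{k}:=n_{k}+\log(nK)$, and a single Cauchy--Schwarz over clusters, $\sum_{k\neq l}\sqrt{a_{k}a_{l}}\,\|\vv_{G_{k}^{\ast}}\|\,\|\vv_{G_{l}^{\ast}}\|\le\big(\sum_{k}\sqrt{a_{k}}\,\|\vv_{G_{k}^{\ast}}\|\big)^{2}\le\big(\sum_{k}a_{k}\big)\|\vv\|^{2}$, converts it into $\lesssim\sigma^{2}(n+K\log n)\|\vv\|^{2}$ --- the first two terms of the bound. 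For the terms carrying $\bar\vepsilon_{lk}$ one controls $\max_{k\neq l}\|\bar\vepsilon_{lk}\|^{2}\lesssim\sigma^{2}p/m$ by a $\chi^{2}$ tail (using $\tfrac{1}{n_{k}}+\tfrac{1}{n_{l}}\le\tfrac{2}{m}$), and invokes the separation hypothesis~\eqref{eqn:separation_bound}, which in the high-dimensional regime forces $\|\vmu_{k}-\vmu_{l}\|^{2}\gtrsim\tfrac{\sqrt{1+\delta}}{1-\beta}\,\sigma^{2}\sqrt{p\log n/m}$, so that $\|\bar\vepsilon_{lk}\|^{2}/\|\vmu_{k}-\vmu_{l}\|^{2}\lesssim\tfrac{1-\beta}{\sqrt{1+\delta}}\sqrt{p/(m\log n)}$; feeding this into the pointwise bounds should keep these contributions within order $\sigma^{2}(1-\beta)K\delta\sqrt{mp\log n}\,\|\vv\|^{2}$, the conditions $m\ge 4(1+\delta)^{2}/\delta^{2}$ and $\log n\ge C_{1}(1-\beta)^{2}n/(\beta^{2}m)$ being exactly what is needed to absorb the cluster-size sums into this expression.

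The step I expect to be the main obstacle is making the $\bar\vepsilon_{lk}$-contributions --- above all the noise--noise term $\sum_{k\neq l}\tfrac{n_{k}n_{l}}{t^{(k,l)}}(\bar\vepsilon_{lk}^{T}\vg_{k})(\bar\vepsilon_{lk}^{T}\vg_{l})$ --- land at the \emph{sharp} order $\sqrt{mp\log n}$ rather than something larger. The crude estimate $|\vd_{kl}^{T}\vg_{k}|\le\|\vd_{kl}\|\,\|\vg_{k}\|$, together with $\sum_{k}\|\vg_{k}\|^{2}\lesssim\sigma^{2}(n+p)\|\vv\|^{2}$ and a global Cauchy--Schwarz, injects a spurious factor $\sim(n+p)$ which is too lossy once $p\gg n$; one must instead be careful which power of $\|\vmu_{k}-\vmu_{l}\|$ from~\eqref{eqn:separation_bound} to spend, distribute the cluster-size factors between the two Gaussian-vector norms rather than pooling them, and, for the noise--noise term whose coefficients $n_{k}n_{l}/t^{(k,l)}$ are themselves random, first peel off the concentration of $t^{(k,l)}$ around a deterministic value so that what remains is (conditionally on $\{\bar\vepsilon_{k}\}$) a mean-zero Gaussian chaos in the independent vectors $(\vg_{k})_{k}$, whose supremum over the $(n-K)$-dimensional subspace $\Gamma_{K}$ is then bounded by a Hanson--Wright inequality plus an $\varepsilon$-net (Frobenius term of order $\sigma^{2}\sqrt{p/(m\log n)}\,\|\vv\|^{2}$). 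A union bound over the $O(K^{2})$ pairs and over the auxiliary $\chi^{2}$/operator-norm events, each at level $n^{-\delta}$ or $n^{-1}$, then delivers the stated failure probability $4K^{2}n^{-\delta}+10n^{-1}$; the conditional statement is read on a typical realization of $\{\bar\vepsilon_{k}\}$, on which $\max_{k\neq l}\|\bar\vepsilon_{lk}\|^{2}\lesssim\sigma^{2}p/m$.
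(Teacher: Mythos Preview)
Your approach is correct and genuinely different from the paper's, and in one respect cleaner. You and the paper agree on the starting identity
\[
T(\vv)=-\sum_{k\neq l}\frac{n_kn_l}{t^{(k,l)}}\big(\vd_{kl}^T\vg_k\big)\big(\vd_{kl}^T\vg_l\big),
\qquad \vd_{kl}=\overline{\vX}_l-\overline{\vX}_k,\quad \vg_k=\sum_{i\in G_k^\ast}v_i\vepsilon_i,
\]
and on the signal--signal piece (your Cauchy--Schwarz over clusters is exactly the paper's bound on $T_{1,kl}$). The divergence is in how the $\bar\vepsilon$-pieces are handled. The paper never uses your key observation that $\Cov(\vg_k,\bar\vepsilon_k)=0$ on $\Gamma_K$; instead it writes $\bar\vepsilon_k=\tfrac{1}{n_k}\vepsilon_i+\tfrac{n_k-1}{n_k}\bar\vepsilon_{k\setminus\{i\}}$ and splits $\sum_{i}v_i\langle\bar\vepsilon_k-\bar\vepsilon_l,\vepsilon_i\rangle$ into three pieces $G_1,G_2,G_3$, the last of which is a degenerate $U$-process handled via the uniform Hanson--Wright inequality (Lemma~\ref{lem:uniform_hw_quadratic}). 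Your conditioning trick replaces all of this by a single $\chi^2$ bound: conditionally on $\{\bar\vepsilon_k\}$, the quantity $\sup_{\|\vv_{G_k^\ast}\|\le 1}|\vd_{kl}^T\vg_k|=\|P_k\mathcal E_{G_k^\ast}^T\vd_{kl}\|$ has law $\sigma\|\vd_{kl}\|\cdot\chi_{n_k-1}$, so $|\vd_{kl}^T\vg_k|\le C\sigma\|\vd_{kl}\|\sqrt{n_k+\delta\log n}\,\|\vv_{G_k^\ast}\|$ uniformly over all pairs with failure probability $O(K^2n^{-\delta})$. This is more elementary than the paper's Lemma~\ref{lem:random_fluctuation} and actually gives a slightly tighter noise--noise bound, of order $\beta^{-1}(1-\beta)\sqrt{p/(m\log n)}\,(n+K\log n)$ rather than the paper's $\beta^{-1}(1-\beta)K\delta\sqrt{mp\log n}$; the two are equivalent only under the side conditions $\log n\gtrsim n/m$ and $K\delta\lesssim 1$ of Theorem~\ref{thm:SDP_bound}, so if you want the lemma to stand alone you should flag this.

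Your last paragraph over-complicates matters. The ``crude estimate $|\vd_{kl}^T\vg_k|\le\|\vd_{kl}\|\,\|\vg_k\|$'' that you worry about is \emph{not} the estimate you actually proposed two paragraphs earlier; your $\chi^2$ bound already avoids the spurious $p$-factor because it projects onto the one-dimensional direction $\vd_{kl}$ before taking the supremum over $\vv$. There is no need to ``peel off the concentration of $t^{(k,l)}$'' or to invoke Hanson--Wright for the noise--noise term---just plug $\|\bar\vepsilon_{lk}\|^2\lesssim\sigma^2 p(n_k^{-1}+n_l^{-1})$, the lower bound~\eqref{eqn:t_lower_bound} on $t^{(k,l)}$, and the separation hypothesis into your $\chi^2$ estimate, then sum with the same Cauchy--Schwarz you used for the signal--signal part.
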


\noindent By combining previous bounds on $|S(\vv)|$ and $|T(\vv)|$ together, we obtain
\begin{align*}
\mathbb P \bigg(\langle \vv, \, W_n\vv\rangle \leq \lambda - \sigma^2(\sqrt{n}+\sqrt{p} +\sqrt{2\log{n}}\,)^2 - & C\beta^{-1} \sigma^2 \big(n + K\log n + (1-\beta) K \delta \sqrt{m p \log n}\big), \\
 & \forall \vv\in \Gamma_K, \,\|\vv\|=1\bigg) \leq (5K^2+1)\,n^{-\delta}. 
\end{align*}
Combining this with our constructions~\eqref{Eqn:lambda_def} for $\lambda^\sharp$,~\eqref{Eqn:alpha_def} for $\valpha^\sharp$ and~\eqref{eqn:B_form} for $B^\sharp$ and all previous analysis, we obtain that $(\lambda^\sharp,\valpha^\sharp,B^\sharp)$ will be a dual certificate that satisfies (1)--(5) 
with probability at least $1-(5K^2+1)\,n^{-\delta}$ if
\begin{align}
&\sigma^2(\sqrt{n}+\sqrt{p} +\sqrt{2\log n}\,)^2 +C\beta^{-1}\sigma^2\, (n + K\log n +(1-\beta)K\delta\sqrt{p m \log n})   \notag\\
&\qquad   \leq p\sigma^2 + \frac{\beta}{4}\,m \frac{4\sigma^2(1+2\delta)}{(1-\beta)^{2}} \left(1+\sqrt{1+\frac{(1-\beta)^{2}}{(1+\delta)} \frac{p}{m\log n}+ c_1R_{n} }\,\right) \,\log n.\label{Eqn:To_prove}
\end{align}
which is true if for some universal constants $C,c>0$, 
\begin{align*}
\log n \geq \frac{(1-\beta)^2}{\beta^2} \frac{Cn}{m},\quad\mbox{and}\quad \delta\leq \frac{\beta^2}{(1-\beta)^2}\frac{c}{K}.
\end{align*}

\section{Proof of Theorem~\ref{thm:SDP_low_bound}}
The first step is to reduce the worst-case misclassification risk to the average-case risk by putting a prior $\pi^H$ over $H=\{h_{ik}\}$ with $(h_{i1},\ldots,\,h_{iK})$ being i.i.d.~ following the multinomial distribution with one trial and probability vector $(n/K,\ldots,n/K)$. By the classical Chernoff bound we have 
\begin{align}\label{eqn:chernoff}
\mathbb P_{\pi^H}\Big( n_k:=\sum_{i=1}^n h_{ik} \in \Big[(1-\delta_n)\,\frac{n}{K},\, (1+\delta_n)\,\frac{n}{K}\Big],\, k\in[K]\Big) \geq 1-n^{-1},
\end{align}
by choosing the constant $C$ in $\delta_n$ large enough. 
As a consequence, we have
\begin{align*}
&\inf_{\{\hat h_{ik}\}}\sup_{(H,\vmu)\in\Theta(n,K,\Delta)} \mathbb P_{(H,\vmu)}\big(\hat h_{ik} \neq h_{ik},\, i\in[n],\,k\in[K]\big) \\
\geq& \, \inf_{\{\hat h_{ik}\}}\sup_{\|\vmu_k-\vmu_l\|\geq \Delta, \forall (k,l)\in[K]^2, k\neq l} \bE_{\pi^H} \mathbb P_{(H,\vmu)}\big(\hat h_{ik} \neq h_{ik},\, i\in[n],\,k\in[K]\big) - n^{-1}.
\end{align*}
Conditioning on the event that $n_1+n_2$ points belong to the first two clusters, the problem of correctly classifying all $n$ samples into $K$ clusters is always not easier than correctly classifying the $n_1+n_2$ points into the first and second clusters, that is,
\begin{align*}
P_{(H,\vmu)}\big(\hat h_{ik} \neq h_{ik},\, i\in[n],\,k\in[K]\big) \geq P_{(H,\vmu)}\big(\hat h_{ik} \neq h_{ik},\, i\in G_1\cup G_2,\,k\in[2]\big),
\end{align*}
where recall that $G_k=\{i\in[n]:\, h_{ik}=1\}$ denote the $k$-th cluster.
Now we apply the following minimax lower bound Lemma~\ref{lem:lower_bound__K=2} proved in Section~\ref{Sec:minimax_K_2} below for two clusters $G_1$ and $G_2$ conditioning on their total sizes $n_1+n_2$,
\begin{align*}
\inf_{\{\hat h_{ik},i\in G_1\cup G_2, k=1,2\}}  \sup_{\|\vmu_1-\vmu_2\|\geq \Delta} \bE_{\tilde\pi^{12}} P_{(H,\vmu)}\big(\hat h_{ik} \neq h_{ik},\, i\in G_1\cup G_2,\,k\in[2]\big) \geq 1-cK/n,
\end{align*}
for some $c>0$, where $\tilde \pi^{12}$ denote the conditional prior distribution of $\{\hat h_{ik},i\in G_1\cup G_2, k=1,2\}$ given the total sample size $n_1+n_2$ of $G_1\cup G_2$, which is uniform over $\{1,2\}^{n_1+n_2}$. Here we have used the high probability bound~\eqref{eqn:chernoff} so that with probability at least $1-n^{-1}$, the separation $\Delta$ satisfies 
\[
\Delta^2 \leq 4(1-\alpha/2)\sigma^2\left(1+\sqrt{1+\frac{2p}{(n_1+n_2)\log (n_1+n_2)}}\right) \log n.
\]
Note that the proof of Lemma~\ref{lem:lower_bound__K=2} also reduces the worst-case bound to the average-case bound, where the prior on the cluster label is uniform as the conditional distribution $\tilde\pi^{12}$ given the total size $n_1+n_2$. 
Putting all pieces together and using $K \leq \log{n}$ give a proof of the claimed result.

\subsection{Lower bound for $K=2$}\label{Sec:minimax_K_2}
Now we prove an information-theoretic limit for exact recovery of clusters labels in a symmetric two-component Gaussian mixture model, 
\begin{align}
\label{eqn:2component_sym_mixtureGaussians}
\vX_i = \eta_i \vmu  + \sigma\vepsilon_i, \quad \vepsilon_i \overset{\mbox{i.i.d.}}{\sim} N(0, I_p),\quad i=1,\ldots,n,
\end{align}
where $\vmu$ and $-\vmu$ are unknown centers of the two symmetric Gaussian components, and $\eta_i\in\{-1,1\}$ is the label for the $i$th observation indicating which component it comes from.  

\begin{lem}[Separation lower bound for exact recovery: $K=2$]
\label{lem:lower_bound__K=2}
Let $\alpha \in (0, 1)$. Consider the symmetric two-component Gaussian mixture model in~\eqref{eqn:2component_sym_mixtureGaussians} with an independent Rademacher prior distribution on $\eta_{i}$. If $\Delta^{2} \leq (1-\alpha) \overline{\Delta}^{2}$, then
\begin{equation}
\label{eqn:lower_bound__K=2}
\inf_{\hat{\eta}} \sup_{\|\vmu\| \geq \Delta/2} \Prob(\hat{\eta} \neq \eta) \geq 1-c n^{-1},
\end{equation}
where $c > 0$ is a constant depending only on $\alpha$ and the infimum is over all possible estimators $\hat\eta$ for $\eta=\{\eta_i\}_{i=1}^n \in \{\pm 1\}^{n}$.
\end{lem}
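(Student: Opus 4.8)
\medskip
\noindent\emph{Proof strategy.} The plan is to lower bound the minimax risk by a Bayes risk, identify the Bayes optimal rule explicitly, and show that it already errs with probability $1-O(n^{-1})$. Besides the Rademacher prior on $\eta$ fixed in the statement, put a prior on $\vmu$ that is uniform on the sphere $\{\|\vmu\|_2=\Delta/2\}$; this is supported on $\{\|\vmu\|_2\geq\Delta/2\}$, so the minimax risk dominates the Bayes risk for this pair of priors. Integrating the Gaussian likelihood against this rotation invariant prior, the marginal likelihood of $\eta$ is a strictly increasing function of $\big\|\sum_{i=1}^n\eta_i\vX_i\big\|_2$, so the Bayes/MAP estimator of the clustering (the global sign of $\eta$ is never identifiable, hence ``$\hat\eta=\eta$'' is always meant up to a sign flip) is $\hat\eta=\argmax_{\eta\in\{-1,1\}^n}\big\|\sum_{i=1}^n\eta_i\vX_i\big\|_2^2$. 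The crucial reduction is that exact recovery forces every single coordinate flip of the truth $\eta^\ast$ to be non-improving: expanding $\big\|\sum_j\eta^\ast_j\vX_j-2\eta^\ast_i\vX_i\big\|_2^2$ and simplifying gives $\{\hat\eta=\eta^\ast\}\subseteq\bigcap_{i=1}^n E_i^c$, where $E_i:=\big\{\eta^\ast_i\big\langle\vX_i,\ \sum_{j\neq i}\eta^\ast_j\vX_j\big\rangle<0\big\}$ is the event that the ``oracle'' which knows $\eta^\ast_{-i}$ (and hence can form $\sum_{j\neq i}\eta^\ast_j\vX_j$, a positive multiple of an unbiased estimate of $\vmu$) nevertheless misclassifies $\vX_i$. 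It therefore suffices to prove $\Prob\big(\bigcup_{i=1}^n E_i\big)\geq 1-O(n^{-1})$.

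The next step is a sharp estimate of $\Prob(E_i)$. Conditioning on $\vmu$ and on the aggregate noise $W:=\sum_j\eta^\ast_j\vepsilon_j$, the scalar $\eta^\ast_i\big\langle\vX_i,\sum_{j\neq i}\eta^\ast_j\vX_j\big\rangle$ is a function of $\eta^\ast_i\vepsilon_i$ alone, with conditional mean $(1+o(1))(n-1)\|\vmu\|_2^2>0$ (on a high probability event for $W$) and conditional fluctuation dominated by a centered Gaussian of variance $(1+o(1))\big((n-1)^2\sigma^2\|\vmu\|_2^2+(n-1)\sigma^4 p\big)$, coming from $\langle\vepsilon_i,\vmu\rangle$ and $\langle\vepsilon_i,W\rangle$ (the $\|\vepsilon_i\|_2^2$ and $\langle\vmu,W\rangle$ contributions being lower order). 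Hence $\Prob(E_i\mid\vmu,W)\geq\Phi\!\big(-t(1+o(1))\big)$ with $t^2=\|\vmu\|_2^4\big/\big(\sigma^2\|\vmu\|_2^2+\sigma^4 p/n\big)=s^2/(s+p/n)$, writing $s:=\Delta^2/(4\sigma^2)$ and using $\|\vmu\|_2^2=\Delta^2/4$. It suffices to treat the extremal case $\Delta^2=(1-\alpha)\overline{\Delta}^2$ (larger separation only helping the estimator); since $\bar s:=\overline{\Delta}^2/(4\sigma^2)$ is precisely the positive root of $\bar s^2=2\log n\,(\bar s+p/n)$, one gets, with $s=(1-\alpha)\bar s$,
\[
t^2=\frac{(1-\alpha)^2\bar s^2}{(1-\alpha)\bar s+p/n}\ \leq\ \frac{(1-\alpha)^2\bar s^2}{(1-\alpha)(\bar s+p/n)}\ =\ (1-\alpha)\,\frac{\bar s^2}{\bar s+p/n}\ =\ 2(1-\alpha)\log n .
\]
Consequently $\Prob(E_i\mid\vmu,W)\geq c(\alpha)\,n^{-(1-\alpha/2)}/\sqrt{\log n}$ for $n$ large, so that $\sum_{i=1}^n\Prob(E_i\mid\vmu,W)\geq c(\alpha)\,n^{\alpha/2}/\sqrt{\log n}\gg\log n$.

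To pass from this per coordinate estimate to $\Prob(\bigcup_i E_i)$ at the sharp $n^{-1}$ scale I would use negative association rather than a mere second moment bound. Conditionally on $(\vmu,W)$ the vectors $\{\eta^\ast_i\vepsilon_i\}_{i=1}^n$ are jointly Gaussian with pairwise covariance $-\tfrac1n I_p\preceq 0$ (they are an i.i.d.\ Gaussian sample conditioned on their sum $W$), and — after restricting to the high probability event that $\|\vepsilon_i\|_2^2=(1+o(1))p$ for every $i$ and $\|W\|_2^2=(1+o(1))np$, and absorbing the coordinate signs of the common direction $v\propto(n-2)\sigma\vmu+\sigma^2 W$ — each $E_i$ becomes, up to an error of size $o(\Prob(E_i))$, the common half space event $\{\langle\eta^\ast_i\vepsilon_i,v\rangle<a\}$. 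Jointly Gaussian vectors with nonpositive pairwise covariances are negatively associated, so $\Prob\big(\bigcap_i E_i^c\mid\vmu,W\big)\leq\prod_i\Prob(E_i^c\mid\vmu,W)\leq\exp\!\big(-\sum_i\Prob(E_i\mid\vmu,W)\big)\leq n^{-2}$ on the typical event for $(\vmu,W)$; integrating out $(\vmu,W)$ and bounding the small atypical event trivially by $1$ yields $\Prob(\bigcup_i E_i)\geq 1-O(n^{-1})$, hence the claim. In the low dimensional regime $p=O(\log n)$, where $W$ and the $\|\vepsilon_i\|_2^2$ need not concentrate at the $n^{-1}$ level, one instead takes $\vmu$ to be a fixed vector of norm $\Delta/2$: the Bayes rule is then coordinatewise, $\hat\eta_i=\sign\langle\vX_i,\vmu\rangle$, the events $\{\hat\eta_i\neq\eta^\ast_i\}$ are genuinely independent with probability $\Phi(-\Delta/(2\sigma))\geq c(\alpha)\,n^{-(1-\alpha)}/\sqrt{\log n}$ each, and the conclusion is immediate.

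The main obstacle is the passage in the previous paragraph: obtaining the rate $1-O(n^{-1})$ rather than merely $\Prob(\bigcup_i E_i)\to1$ (which a Chung--Erd\H{o}s / Paley--Zygmund second moment argument would already give) requires pinning down the dependence among the $E_i$ exactly. Here this is achieved by conditioning on $(\vmu,W)$ and on the noise norms, after which the dependence is carried by a single rank one correction in the Gaussian covariance and is controlled by negative association; one must simultaneously verify that this conditioning does not erode the constant in the per coordinate tail bound $t^2\leq2(1-\alpha)\log n$, which is exactly the estimate that makes the threshold $\overline{\Delta}^2$ sharp.
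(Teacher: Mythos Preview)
Your proposal is essentially correct and follows the same architecture as the paper's proof: reduce to a Bayes risk via a rotation-invariant prior on $\vmu$, identify the MAP estimator as $\argmax_\eta\|\sum_i\eta_i\vX_i\|$, observe that exact recovery forces $\langle\eta_i^\ast\vX_i,\sum_{j\neq i}\eta_j^\ast\vX_j\rangle\geq0$ for every $i$, and then show that the intersection $\bigcap_iE_i^c$ has probability $O(n^{-1})$ via a sharp per--coordinate Gaussian tail plus a comparison to the independent case. Your identification of the critical ratio $t^2=s^2/(s+p/n)$ with $s=\Delta^2/(4\sigma^2)$ and the verification $t^2\leq2(1-\alpha)\log n$ is exactly the calculation that makes $\overline\Delta^2$ the threshold.

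The differences are technical rather than structural. First, the paper uses a Gaussian prior $N(0,\kappa_n^2I_p)$ on $\vmu$ (with $\kappa_n$ tuned so that $\|\vmu\|^2$ concentrates near $\Delta^2/4$), whereas you use the uniform measure on the sphere; both yield a marginal likelihood depending on $\eta$ only through $\|\sum_i\eta_i\vX_i\|$. Second, and more interestingly, the paper isolates the linear--in--$\vepsilon_i$ part by the decomposition $R_{i,1}+R_{i,2}+R_{i,3}$, so that $U_i=\langle\vepsilon_i-\overline\vepsilon_n,\vmu+\tfrac{n}{n-1}\overline\vepsilon_n\rangle$ is \emph{exactly} Gaussian given $\overline\vepsilon_n$, and then invokes Slepian's inequality (the $U_i$ have the same variances but larger pairwise increments than i.i.d.\ copies) to get $\Prob(\max_iU_i\leq t)\leq(1-\Phi^c(\cdot))^n$. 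Your route---condition on $W=\sum_j\eta_j^\ast\vepsilon_j$, reduce each $E_i$ to a half-space event $\{\langle\eta_i^\ast\vepsilon_i,v\rangle<a\}$, and use that scalar Gaussians with nonpositive covariances are negatively associated---delivers the identical product bound (Slepian and NA for Gaussians are two faces of the same inequality here). The paper's decomposition is slightly cleaner because the quadratic nuisance $-\sigma^2\|\vepsilon_i\|^2$ is segregated into $R_{i,2}$ and bounded uniformly on a high-probability event \emph{before} the comparison, whereas you must argue the inclusion $E_i^c\cap\mathcal G\subseteq\tilde E_i^c$ on $\mathcal G=\{\|\vepsilon_i\|^2\approx p\ \forall i\}$ and then apply NA to the $\tilde E_i$; this works, but your phrase ``absorbing the coordinate signs of the common direction $v$'' is unnecessary and a little confusing---$v=(n-2)\sigma\vmu+\sigma^2W$ is fixed once you condition, and nothing needs absorbing.

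Finally, the paper carries out a finer case analysis (medium/low/high signal for $p\geq\log^2n$, and a separate $p<\log^2n$ branch), while you dichotomize only into ``$p$ large'' versus ``$p=O(\log n)$''. Your point-mass-prior argument for small $p$ is exactly the paper's treatment of the high-signal and low-dimensional cases, but be aware that the concentration $\Prob(\mathcal G^c)=O(n^{-1})$ for your main branch needs roughly $p\gtrsim\log^2n$, so you should let the point-mass argument cover the entire regime where $\overline\Delta^2=(8+o(1))\sigma^2\log n$ (equivalently $p=o(n\log n)$ suffices for that branch), not merely $p=O(\log n)$.
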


\begin{rem}
Our Lemma~\ref{lem:lower_bound__K=2} is stronger than the exact recovery notation in~\cite{Ndaoud2019} and the probability of wrong recovery lower bound in~\eqref{eqn:lower_bound__K=2} does not follows from the lower bound therein for the expected Hamming distance loss in the symmetric two-component Gaussian mixture model. Moreover, complementing the upper bound in Corollary~\ref{cor:SDP_bound}, the lower bound is sharply optimal in the sense that the probability of wrong recovery is arbitrarily close to one (rather than just bounded away from zero) if the separation signal size $\Delta^{2}$ is below the cutoff value $\overline{\Delta}^{2}$. \qed
\end{rem}

\begin{proof}[Proof of Lemma~\ref{lem:lower_bound__K=2}]
To prove the lower bound, we follow the same setup as in the lower bound proof in~\cite{Ndaoud2019} by placing a $N(0,\kappa_{n}^{2} I_p)$ prior on $\vmu$ and an independent Rademacher prior on $\eta$. Note that algorithm that maximizing the probability of reconstructing labels correctly is the maximum a posterior (MAP) estimator $\widetilde\eta =\argmax_{\eta} p(\eta\,|\,\vX)$. Since the prior label assignment is uniform, MAP is in particular equivalent to maximum (integrated) likelihood estimator (MLE) after integrating out $\vmu$, i.e., $\widetilde\eta = \argmax_{\eta} L(\eta \,|\,\vX)$ where $L(\eta \,|\,\vX) = p(\vX\,|\,\eta)$ is viewed as a function of $\eta$. Specifically, the maximum (integrated) likelihood function can be computed as follows
\begin{align*}
L(\eta \,|\,\vX) &= \int_{\R^p} \prod_{i=1}^n p(\vX_i\,|\,\vmu,\eta_i) \,p(\vmu)\, \rd\vmu\\
&\propto \int_{\R^p} \exp\Big\{-\frac{1}{2\sigma^2} \sum_{i=1}^n\|\vX_i-\eta_i\vmu\|^2 -\frac{1}{2\kappa_{n}^2}\|\vmu\|^2\Big\}\, \rd\vmu\\
&\propto \exp\Big\{\frac{1}{2} \Big(\frac{n}{\sigma^2}+\frac{1}{\kappa_{n}^2}\Big)^{-1} \Big\|\frac{1}{\sigma^2}\sum_{i=1}^n \eta_i \vX_i\Big\|^2\Big\}.
\end{align*}
We can see from the last expression that the MLE fails if there exists some $i\in[n]$ such that $\big\|\eta_i\vX_i+\sum_{j\neq i}^n \eta_j \vX_j\big\|^2 < \big\|-\eta_i\vX_i+\sum_{j\neq i}^n \eta_j \vX_j\big\|^2$, or equivalently,
$\big\langle \eta_i\vX_i,\, \sum_{j\neq i}^n \eta_j \vX_j\big\rangle <0$.
Therefore,
\begin{align}\label{Eqn:reduction}
\inf_{\widehat\eta}\Prob(\widehat{\eta}\neq \eta) = \Prob(\widetilde\eta \neq \eta) \geq  \Prob\Big(\exists i\in[n],\mbox{\, such that\, } \big\langle \eta_i\vX_i,\, \sum_{j\neq i}^n \eta_j \vX_j\big\rangle <0\Big).
\end{align}
Without loss of generality, we assume $\sigma=1$. Let $\overline{\vepsilon}_{n} = n^{-1} \sum_{i=1}^{n} \vepsilon_i$ be the sample average of the noise. Since $(\eta_1 \vepsilon_1,\dots,\eta_n \vepsilon_n)$ has the same joint distribution as $(\vepsilon_1,\dots,\vepsilon_n)$, we can write
\begin{align*}
 &\,\frac{1}{n-1} \big\langle \eta_i\vX_i,\, \sum_{j\neq i}^n \eta_j \vX_j\big\rangle = \big \langle \vmu + \eta_i \vepsilon_i, \, \mu+ \frac{1}{n-1} \sum_{j \neq i} \eta_i \vepsilon_i \big \rangle \stackrel{d}{=} \Big\langle \vmu + \vepsilon_i, \, \vmu + \frac{n}{n-1}\bar\vepsilon_{n} - \frac{1}{n-1}{\vepsilon_i}\Big\rangle\\
 & = \underbrace{\Big\langle\vepsilon_i-\overline{\vepsilon}_n, \, \vmu + \frac{n}{n-1}\overline{\vepsilon}_n\Big\rangle + \|\vmu\|^2}_{=:R_{i,1}} +\underbrace{\frac{n}{n-1}\|\overline{\vepsilon}_n\|^2 -\frac{1}{n-1}\|\vepsilon_i\|^2}_{=:R_{i,2}}+\underbrace{\frac{2n-1}{n-1}\langle \vmu, \overline{\vepsilon}_n\rangle - \frac{1}{n-1}\langle \vmu,\vepsilon_i\rangle}_{=:R_{i,3}}.
\end{align*}

\noindent{\bf Bound $R_{i,3}$.} Let $\beta_n > 0$ and 
\begin{eqnarray*}
\cB_1 &=& \big\{ \sqrt{n} |\langle \vmu, \overline{\vepsilon}_n \rangle | \leq \sqrt{n-1} \beta_n \|\vmu\|^2, \; \max_{i \in [n]} |\langle \vmu, \vepsilon_i \rangle | \leq \sqrt{n-1} \beta_n \|\vmu\|^2 \big\}, \\
\widetilde{\cB}_1 &=& \big\{ \sqrt{n} |\langle \vmu, \overline{\vepsilon}_n \rangle | \leq \sqrt{n-1} \beta_n \|\vmu\|, \; \max_{i \in [n]} |\langle \vmu, \vepsilon_i \rangle | \leq \sqrt{n-1} \beta_n \|\vmu\| \big\}.
\end{eqnarray*}
By the standard tail inequality of the Gaussian random variable and union bound, we have $\Prob(\cB_1^c) \leq \min\{ 1, n \exp(-c n \beta_n^2 \|\vmu\|^2) \}$ and $\Prob(\widetilde{\cB}_1^c) \leq \min\{1, n \exp(-c n \beta_n^2) \}$ for some universal constant $c > 0$. In addition, we have $\max_{i \in [n]} |R_{i,3}| \leq 3\beta_n\|\vmu\|^2$ on the event $\cB_1$ and $\max_{i \in [n]} |R_{i,3}| \leq 3\beta_n\|\vmu\|$ on the event $\widetilde\cB_1$.

\vspace{1em}
\noindent{\bf Bound $R_{i,2}$.} By tail inequalities of the chi-square random variable in Lemma~\ref{lem:tail_bound_chisquare}, we have for any $t > 0$ and $i \in [n]$,
\[
\Prob(\big| \|\vepsilon_i\|^2 - p \big| \geq 2\sqrt{pt} + 2t) \leq 2 e^{-t} \quad \mbox{and} \quad \Prob(\big| n \|\overline{\vepsilon}\|^2 -p \big| \geq 2 \sqrt{pt} + 2t) \leq 2 e^{-t}.
\]
Thus we have $\Prob(\cB_2^c) \leq 4 n^{-1}$, where
\[
\cB_2 = \big\{ \big| n \|\overline\vepsilon_n\|^2 - p \big| \leq 2 \sqrt{p \log{n}} + 2 \log{n}, \; \max_{i \in [n]} \big| \|\vepsilon_i\|^2 - p \big| \leq 2 \sqrt{2 p \log{n}} + 4 \log{n} \big\}.
\]
On the event $\cB_2$, we have $\max_{i \in [n]} |R_{i,2}| \leq 6 (\sqrt{p \log{n}} + \log{n})/(n-1)$.

\vspace{1em}
\noindent{\bf Analyze $R_{i,1}$.} From elementary calculations, we have that the conditional joint distribution of $U_i:\,=\Big\langle\vepsilon_i-\overline\vepsilon_n, \, \vmu + \frac{n}{n-1}\overline\vepsilon_n\Big\rangle$, $i=1,\ldots,n$, given $\overline\vepsilon_n$ is
\begin{align*}
\left. 
\begin{pmatrix}
U_1 \\ U_2 \\ \vdots \\ U_n
\end{pmatrix} \, \right|\,  \overline\vepsilon_n \sim N \left(
\begin{pmatrix}
0 \\ 0 \\ \vdots \\ 0
\end{pmatrix},\, \Big \|\vmu + \frac{n}{n-1}\overline\vepsilon_n\Big\|^2
\begin{pmatrix}
1-n^{-1} & -n^{-1} & \cdots &  -n^{-1}\\
-n^{-1} & 1-n^{-1} & \cdots &  -n^{-1}\\
\vdots & \vdots & \ddots & \vdots\\
-n^{-1} & -n^{-1} & \cdots &  1-n^{-1}\\
\end{pmatrix}\right).
\end{align*}
Conditioning on $\overline\vepsilon_n$, let $\{Z_i\}_{i=1}^n$ be i.i.d. $N\big(0, (1-n^{-1})\big \|\vmu + \frac{n}{n-1}\overline\vepsilon_n\big\|^2\big)$ random variables. Since $\mathbb E(U_i^2\,|\,\overline\vepsilon_n) = \mathbb E(Z_i^2\,|\,\overline\vepsilon_n)$ and $\mathbb E((U_i-U_j)^2\,|\,\overline\vepsilon_n) \geq  \mathbb E((Z_i-Z_j)^2\,|\,\overline\vepsilon_n)$ for $i,j\in[n]$, by Slepian's inequality (cf. Theorem 7.2.9 in~\cite{Vershynin2018_Cambridge}) we have
\begin{align*}
\Prob \Big(\max_{i\in [n]} U_i > t \, \Big| \, \overline\vepsilon_n\Big) \geq \Prob \Big(\max_{i\in [n]} Z_i > t\,\Big|\,\overline\vepsilon_n\Big) = 1-\Big(1-\Prob\big(Z_1 >t\,\big|\,\overline\vepsilon_n\big)\Big)^n,\quad t\in\mathbb R.
\end{align*}
Combining the previous three terms with~\eqref{Eqn:reduction}, we obtain that
\begin{align}
\nonumber
\Prob(\widehat{\eta}\neq \eta) & \geq 1- \Prob\Big(R_{i,1} + R_{i,2} + R_{i,3} \geq 0, \; \forall i\in[n] \Big) \\\nonumber
& = 1 - \Prob\Big(-U_{i} \leq \|\vmu\|^2 + R_{i,2} + R_{i,3}, \; \forall i\in[n] \Big) \\ \label{eqn:master_lower_bound_two-components}
& \geq \left\{
\begin{array}{cc}
1 - \Prob\Big(-U_{i} \leq (1+3\beta_n) \|\vmu\|^2 + {6 (\sqrt{p \log{n}} + \log{n}) \over n-1}, \; \forall i\in[n] \Big) &  \mbox{on } \cB_1 \cap \cB_2 \\
1 - \Prob\Big(-U_{i} \leq \|\vmu\|^2 + 3\beta_n\|\vmu\| + {6 (\sqrt{p \log{n}} + \log{n}) \over n-1}, \; \forall i\in[n] \Big) &  \mbox{on } \widetilde\cB_1 \cap \cB_2 \\
\end{array}
\right. .
\end{align}
For $\gamma_n >0$, we define
\[
\cB_3 = \big\{ \big| \|\vmu\|^2 - \Delta^2/4 \big| \leq \Delta^2 \gamma_{n}/4\big\}.
\]
With the prior distribution $\vmu \sim N(0, 4^{-1}\kappa_n^2 I_p)$ where $\kappa_n^2 = {\Delta^2 \over 4p (1-\nu_n)}$ and $\nu_n = \sqrt{n \Delta^2 \over 4p \log^2{n}}$, it follows from the proof of Theorem 5 in~\cite{Ndaoud2019} (cf. equation (28)) that $\Prob(\cB_3^c) \leq 2\exp(-p \gamma_n^2 / 32)$, provided $4\nu_n \leq \gamma_n \leq 1$. Moreover, using the lower tail bound of the chi-square random variable in Lemma~\ref{lem:tail_bound_chisquare}, we have $\Prob(\cB_4^c) \leq \exp(-p \theta_n^2 / 4)$, where 
\[
\cB_4 = \big\{\|\overline\vepsilon_n\|^2 \geq {p \over n} (1-\theta_n)\big\}.
\]
To analyze the right-hand side of~\eqref{eqn:master_lower_bound_two-components}, we first consider the higher dimensional case where $p \geq \log^{2}{n}$. In such regime, we divide further into three cases depending on the separation signal size as following.

\noindent{\bf Medium signal size case: ${2\log^{3/2}{n} \over \sqrt{n}} < \Delta < 2\sqrt{p \log{n} \over n}$.} Since $-(U_1,\dots,U_n)$ has the same joint distribution of $(U_1,\dots,U_n)$ given $\overline\vepsilon_n$, we can bound on $\cB_1 \cap \cB_2$,
\begin{align*}
& \Prob\Big(-U_{i} \leq (1+3\beta_n) \|\vmu\|^2 + {6 (\sqrt{p \log{n}} + \log{n}) \over n-1}, \; \forall i\in[n] \Big) \\
= & \Prob\Big(\max_{i \in [n]} U_{i} \leq (1+3\beta_n) \|\vmu\|^2 + {6 (\sqrt{p \log{n}} + \log{n}) \over n-1} \Big) \\
\leq & \E\Big[ 1 - \Prob\Big( Z_1 > (1+3\beta_n) \|\vmu\|^2 + {6 (\sqrt{p \log{n}} + \log{n}) \over n-1} |\; \overline\vepsilon_n \Big) \Big]^n.
\end{align*}
Let $Z \sim N(0, 1)$ be the standard Gaussian random variable. Thus, on the event $\bigcap_{i=1}^{4} \cB_i$, we have
\begin{align*}
& \Prob\Big( Z_1 > (1+3\beta_n) \|\vmu\|^2 + {6 (\sqrt{p \log{n}} + \log{n}) \over n-1} |\; \overline\vepsilon_n \Big) \\
= & \Prob\left( Z > {(1+3\beta_n) \|\vmu\|^2 + {6 (\sqrt{p \log{n}} + \log{n}) \over n-1}  \over \sqrt{1-{1\over n}} \sqrt{\|\vmu\|^2 + {n^2 \over (n-1)^2} \|\overline{\vepsilon}_n\|^2 + {2 n \over n-1} \langle \vmu, \overline\vepsilon_n \rangle} } |\; \overline\vepsilon_n \right) \\
\geq & \Prob\left( Z > {(1+3\beta_n) (1+\gamma_n) \Delta^2 + {24 (\sqrt{p \log{n}} + \log{n}) \over n-1}  \over  \sqrt{4(1-{1\over n}-2\beta_n) (1-\gamma_n) \Delta^2 + {16p \over n-1} (1-\theta_n) } } \right) = \Phi^c(V_n),
\end{align*}
where
\[
V_n = {(1+3\beta_n) (1+\gamma_n) \Delta^2 + {24 (\sqrt{p \log{n}} + \log{n}) \over n-1}  \over \sqrt{4(1-{1\over n}-2\beta_n) (1-\gamma_n) \Delta^2 + {16p \over n-1} (1-\theta_n)} }.
\]
and $\Phi^c(t) = \Prob(Z\geq t)$. 
Combining all pieces together, we obtain that
\begin{equation}
\label{eqn:wrong_recovery_prob_high_dim_medium_signal}
\Prob(\widetilde{\eta}\neq \eta) \geq 1 - [1- (1-r_n) \Phi^c(V_n)]^{n} - r_n
\end{equation}
provided $4 \nu_n \leq \gamma_n \leq 1$, where
\[
r_n = \min\{ 1, n \exp(-c n \beta_n^2 (1-\gamma_n^2) \Delta^2) \} + 4n^{-1} + 2\exp(-p \gamma_n^2 / 32) + \exp(-p \theta_n^2 / 4).
\]
Note that $\Delta < 2\sqrt{p \log{n} \over n}$, implying $\nu_n^2 \leq {1 \over \log{n}}$. We choose
\[
\beta_n^2 = {1 \over \log{n}}, \quad \gamma_n^2 = {16 \over \log{n}}, \quad \theta_n^2 = {1 \over \log{n}}.
\]
Since $p \geq \log^2{n}$ and $\Delta > {2\log^{3/2}{n} \over \sqrt{n}}$, we have
\[
\Prob(\widetilde{\eta}\neq \eta) \geq 1 - [1- (1-cn^{-1}) \Phi^c(V_n)]^{n} - cn^{-1}.
\]
By using the fact that $\log \Phi^c(t)\sim -t^2/2$ as $t\to\infty$, we can conclude that as long as $V_n \leq \sqrt{2(1-\delta)\log n}$ for any $\delta \in (0, 1)$, then for $n \geq 2 c$, we have
\[
\Prob(\widetilde{\eta}\neq \eta) \geq 1 - \Big(1 - {1-cn^{-1} \over n^{1-\delta}} \Big)^{n} - cn^{-1} \geq 1 - e^{-n^{\delta}/2} - c n^{-1} \geq 1 - c'n^{-1},
\]
where $c'$ is a constant depending on $\delta$. The condition $V_n \leq \sqrt{2(1-\delta)\log n}$ is implied by $\Delta^{2} \leq (1-\alpha) \overline{\Delta}^{2}$ for some $\delta := \delta(\alpha)$ and by inverting the function $x \mapsto {x / \sqrt{4x + 16p/n}}$ for $x > 0$.

\vspace{1em}
\noindent{\bf Low signal size case: $\Delta \leq {2\log^{3/2}{n} \over \sqrt{n}}$.} The argument is similar to the medium signal size case, so we only sketch the proof. On the event $\widetilde{\cB}_1 \bigcap \cB_2 \bigcap \cB_3 \bigcap \cB_4$, we have
\begin{align*}
& \Prob\Big(\max_{i \in [n]} U_{i} \leq \|\vmu\|^2 + 3\beta_n \|\vmu\| + {6 (\sqrt{p \log{n}} + \log{n}) \over n-1} \Big) \\
\leq & \E\Big[ 1 - \Prob\Big( Z_1 > \|\vmu\|^2 + 3\beta_n \|\vmu\| + {6 (\sqrt{p \log{n}} + \log{n}) \over n-1} |\; \overline\vepsilon_n \Big) \Big]^n
\end{align*}
and
\begin{align*}
& \Prob\Big( Z_1 > \|\vmu\|^2 + 3\beta_n \|\vmu\| + {6 (\sqrt{p \log{n}} + \log{n}) \over n-1} |\; \overline\vepsilon_n \Big) \\
\geq & \Prob\left( Z > {\|\vmu\|^2 + 3\beta_n \|\vmu\|+ {6 (\sqrt{p \log{n}} + \log{n}) \over n-1}  \over \sqrt{1-{1\over n}} \sqrt{\|\vmu\|^2 + {n^2 \over (n-1)^2} \|\overline{\vepsilon}_n\|^2 -2 \beta_n \|\vmu\| } } |\; \overline\vepsilon_n \right) \\
\geq & \Prob\left( Z > {(1+\gamma_n) \Delta^2 + 6 \beta_n \sqrt{1+\gamma_n} \Delta + {24 (\sqrt{p \log{n}} + \log{n}) \over n-1}  \over  \sqrt{4(1-{1\over n}) (1-\gamma_n) \Delta^2 + {16p \over n-1} (1-\theta_n) - 16 \beta_n \sqrt{1-\gamma_n} \Delta} } \right) = \Phi^c(V_n),
\end{align*}
where
\[
V_n = {(1+\gamma_n) \Delta^2 + 6 \beta_n \sqrt{1+\gamma_n} \Delta + {24 (\sqrt{p \log{n}} + \log{n}) \over n-1}  \over  \sqrt{4(1-{1\over n}) (1-\gamma_n) \Delta^2 + {16p \over n-1} (1-\theta_n) - 16 \beta_n \sqrt{1-\gamma_n} \Delta} }.
\]
Combining all pieces together, we obtain that
\begin{align*}
\Prob(\widetilde{\eta}\neq \eta) \geq 1 - [1-(1-r_n)\Phi^c(V_n)]^{n} - r_n,
\end{align*}
provided $4 \nu_n \leq \gamma_n \leq 1$, where
\[
r_n = \min\{ 1, n \exp(-c n \beta_n^2) \} + 4n^{-1} + 2\exp(-p \gamma_n^2 / 32) + \exp(-p \theta_n^2 / 4).
\]
Now we choose
\[
\beta_n^2 = {\log^2{n} \over n}, \quad \gamma_n^2 = {16 \over \log{n}}, \quad \theta_n^2 = {1 \over \log{n}}.
\]
If ${2\log^2{n} \over n} < \Delta \leq {2\log^{3/2}{n} \over \sqrt{n}}$, then $\beta_n \Delta \leq p/n$ (recall $p \geq \log^2{n}$) and there exists a sequence $\xi_n \to 0$ as $n \to \infty$ such that
\[
V_n \leq (1+\xi_n) (\Delta/2 + 3 \beta_n + \xi_n) = o(1),
\]
which implies that $\Prob(\widetilde{\eta}\neq \eta) \geq 1 - c n^{-1}$. If $\Delta \leq {2\log^2{n} \over n}$, then
\[
V_n \leq (1+\xi_{n}) {3 \Delta \beta_n \over 2\sqrt{p / n}} = o(1)
\]
and $\Prob(\widetilde{\eta}\neq \eta) \geq 1 - c n^{-1}$.

\vspace{1em}
\noindent{\bf High signal size case: $2\sqrt{p \log{n} \over n} \leq \Delta \leq \sqrt{1-\alpha} \; \overline{\Delta}$.} Note that in this regime, we have $p/n = o(\Delta^2)$ and $p = O(n)$. Then the sharp threshold $\overline\Delta^2 = 8 (1+o(1)) \log{n}$, which is asymptotically independent of $p$. Thus we place an (essentially one-dimensional) point mass prior on $\vmu$ at $(\Delta/2,0,\ldots,0)^T \in\mathbb R^p$. A similar calculation yields
\begin{align*}
&L(\eta\,|\,\vX) \propto \exp\Big\{\frac{1}{\sigma^2} \Big\langle \vmu,\,\sum_{i=1}^n\eta_i\vX_i\Big\rangle\Big\},\quad\mbox{and}\\
&\Prob(\widetilde\eta\neq \eta) \geq \Prob\big(\exists i\in[n],\mbox{\, such that\, } \big\langle \vmu,\, \eta_i\vX_i\big\rangle <0\big).
\end{align*}
Since $\big\{\langle \vmu,\,\eta_i\vX_i\rangle\big\}_{i=1}^n$ are i.i.d. random variables with
\begin{align*}
&\Prob\big(\langle \vmu,\, \eta_i\vX_i\big\rangle \geq 0\big) = \Prob\big( \|\vmu\|^2 + \langle \vmu, \eta_i\vepsilon_i\rangle\geq 0\big)
= \Prob(Z \geq - \|\vmu\|) = 1 - \Phi^c(\Delta/2),
\end{align*}
we have 
\begin{align*}
\Prob(\widetilde\eta\neq \eta) \geq 1- \big(1- \Phi^c(\Delta/2)\big)^n \geq 1-\Big(1-\frac{1}{n^{1-\delta}}\Big)^n \geq 1 - e^{-n^\delta} \geq 1-cn^{-1},
\end{align*}
when $\Delta^2 \leq 4\,(2-\delta)\log n \leq (1-\alpha) \overline{\Delta}^2$ for some $\delta$ depending only on $\alpha$. Here the constant $c$ depending only on $\delta$ (and thus only on $\alpha$).

\vspace{1em}
Finally, we deal with the lower dimensional case where $p < \log^{2}{n}$. In such regime, we also have $\overline{\Delta}^{2} = 8 (1+o(1)) \log{n}$. Following the same argument as in the {\bf high signal size case} under $p \geq \log^2{n}$, we conclude that $\Prob(\widetilde\eta\neq \eta) \geq 1 - c n^{-1}$.
\end{proof}

\section{Discussions}
\label{sec:discussions}

In this paper, we characterized the information-theoretic sharp threshold for exact recovery of Gaussian mixture models. There are still some interesting open questions, which we list below.

\vspace{1em}
\noindent {\bf General noise covariance matrix.} The SDP relaxation in~~\eqref{Eqn:primal_SDP_PW} does not require to know the noise variance $\sigma^{2}$ only in the spherical Gaussian case, i.e., the noise $\vepsilon_{i}$ has i.i.d. $N(0, \sigma^{2} I_{p})$ distribution. Consider the general covariance matrix case $\vepsilon_{i} \sim N(0, \Sigma)$ when $\Sigma$ is not necessarily spherical. If $\Sigma$ is known, then we can first apply the transform $\Sigma^{-1/2} X_{i}$ to make the noise spherical and the sharp threshold in~\eqref{eqn:separation_lower_bound_ChenYang} holds and reads in terms of the minimal Mahalanobis distance
\[
\widetilde\Delta^2=\min_{1 \leq j < k \leq K} d_{\Sigma}^{2}(\mu_{j},\mu_{k}) = 4 \left(1+\sqrt{1+{Kp \over n \log{n}}}\right) \log{n},
\]
where $d_{\Sigma}^{2}(\mu_{j},\mu_{k}) = (\mu_{j}-\mu_{k})^{T} \Sigma^{-1} (\mu_{j}-\mu_{k})$.
If $\Sigma$ is unknown, \cite{Giraud2015} showed that in the $K=2$ case the misclassification probability of the Bayes classifier decays exponentially fast in the Mahalanobis distance $d_{\Sigma}^{2}(\mu_{1},\mu_{2}) = (\mu_{1}-\mu_{2})^{T} \Sigma^{-1} (\mu_{1}-\mu_{2})$ rather than ${\Delta^{2} \over \sigma^{2}} = {\Delta^{2} \over \|\Sigma\|_{op}}$. Thus we conjecture that:
\begin{quote}
\emph{there is a sharp threshold for exact recovery under the general unknown covariance matrix given by $\widetilde\Delta^2$ above.
}
\end{quote}

\vspace{1em}
\noindent {\bf Average-case algorithmic hardness in multiple clusters.} Both our upper and lower bounds for exact recovery in Corollary~\ref{cor:SDP_bound} and Theorem~\ref{thm:SDP_low_bound} require the number of clusters $K = O(\log{n})$. We argue that this condition is likely to be necessary for achieving the sharp threshold of exact recovery. Consider the balanced spherical Gaussian mixture model with common noise variance and multiple communities for $K \geq 3$. It is shown in \cite{BanksMooreVershyninVerzelenXu2018} that: (i) detection and partial (i.e., correlated) recovery are information-theoretically possible if $\rho > 2 \sqrt{p K \log{K} \over n} + 2 \log{K}$; (ii) detection and partial recovery are impossible if $\rho < \sqrt{2p (K-1)\log(K-1) \over n}$, where $\rho$ is the squared signal-to-noise ratio in the Gaussian mixture model (an equivalent quantity of $\Delta^{2} / \sigma^{2}$ in our notation). In contrast, it is also known from~\cite{BanksMooreVershyninVerzelenXu2018,WuXu2018} that spectral methods have correlated recovery with the true community labels if and only if $\rho >  \sqrt{p \over n} (K-1)$. The phase transition of spectral methods is a direct consequence of the BBP phase transition in the random matrix theory~\cite{baik2005,Paul2007_SS}.  Thus for fixed $K$, there is no gap (modulo constants) between computation and information theoretic thresholds. In addition, a sufficient condition for partial recovery of the same SDP as in our paper is given by ${\Delta^{2} \over \sigma^{2}} \gtrsim (1+\sqrt{p/n})K$ in~\cite{GiraudVerzelen2018}. Based on evidence from statistical physics, it is conjectured by~\cite{lesieur2016phase} (and remains as an open problem) that the computational threshold coincides with the spectral methods for partial recovery for large $K$, thus suggesting there is a computationally hard regime where no polynomial time algorithm can attain the information-theoretic threshold when $K \to \infty$.

Now turning into exact recovery. Recall that our result shows that the information-theoretical threshold is
\[
{\overline{\Delta}^{2} \over \sigma^{2}} = 4 \left(\log{n}+\sqrt{\log^{2}{n}+{Kp \log{n} \over n}}\right),
\]
which is achieved by an SDP when $K \lesssim \log(n) / \log\log(n)$. Thus, in such growth region of $K$, there is no computational hardness for exact recovery, which is a similar scenario in the partial recovery case when $K = O(1)$. Note that the threshold ${\overline{\Delta}^{2} / \sigma^{2}}$ is larger (modulo constants) than the partial recovery sufficient condition for the spectral methods and the SDP, which in turn is strictly larger than its necessary condition (i.e., information-theoretic threshold) as $K \to \infty$. Hence, we propose the following conjecture:
\begin{quote}
\emph{for $K \gg \log{n}$, there is no polynomial time algorithm can achieve the average-case exact recovery information-theoretic threshold.}
\end{quote}
If this conjecture is true, then our current regime $K \lesssim \log(n) / \log\log(n)$ where the SDP achieves the information-theoretic limit is sharp, i.e., $K \asymp \log(n)$ would be an algorithmic hardness for exact recovery. The conjecture also implies that transition of hardness of clustering Gaussian mixture models from partial recovery and exact recovery is from $O(1)$ and $O(\log{n})$, respectively.

\vspace{1em}
\noindent {\bf Unbalanced communities.} Corollary~\ref{cor:SDP_bound} and Theorem~\ref{thm:SDP_low_bound} together imply that in the equal cluster size case when $n_1=n_2=\cdots=n_K=\frac{n}{K}$, the SDP relaxation~\eqref{Eqn:primal_SDP_PW} for the $K$-means is minimax-optimal in the sense that sharp phase transition of the probability of wrong recovery from zero to one occurs at the critical threshold given by the $\overline \Delta^2$ in~\eqref{eqn:separation_lower_bound_ChenYang}. It remains an interesting open question whether the separation gap $\overline \Delta$ is sharp when cluster sizes are unbalanced.

\section{Proof of key lemmas}

\subsection{Proof of Lemma~\ref{lem:separation_bound}}
Without loss of generality, we may assume $\sigma=1$. Denote $\mvtheta = \vmu_{k}-\vmu_{l}$ and define the event $\mathcal A=\bigcap_{k,l,i} \mathcal A^{(i)}_{kl}$, where
\begin{align*}
\mathcal A^{(i)}_{kl} =\Big\{\|\vX_i - \overline \vX_l\|^2 -  \|\vX_i - \overline\vX_k\|^2 \geq \frac{n_k+n_l}{n_kn_l}p + \beta\,\|\mvtheta\|^2 - r_{kl} \Big\},
\end{align*}
with the index $(k,l,i)$ ranging over all distinct pairs $(k,l)\in[K]^2$ and all $i\in G^\ast_k$ and 
\[
r_{kl} = 2 \sqrt{2\log(nK) \over n_{l}} \|\mvtheta\| + 2 \frac{n_k+n_l}{n_kn_l} \sqrt{2p\log(nK)} +{4\over n_{k}} \log(nK).
\]
Recall that $\vX_i = \vmu_k+\vepsilon_i$ for each $i\in G^\ast_k$ and $k\in [K]$. We can write 
\begin{align*}
& \|\vX_i - \overline{\vX}_l\|^2 -  \|\vX_i - \overline{\vX}_k\|^2 = \|\mvtheta+\vepsilon_{i}-\overline{\vepsilon}_{l}\|^{2} - \|\vepsilon_{i}-\overline{\vepsilon}_{k}\|^{2} = \langle \mvtheta - \overline{\vepsilon}_{l} + \overline{\vepsilon}_{k}, \mvtheta - \overline{\vepsilon}_{l} + 2\vepsilon_{i} - \overline{\vepsilon}_{k} \rangle \\
& \qquad = \|\mvtheta\|^{2} + \|\overline{\vepsilon}_{l}\|^{2} + {1\over n_{k}}\left(2-{1\over n_{k}}\right)\|\vepsilon_{i}\|^{2} - \left( {n_{k}-1 \over n_{k}} \right)^{2} \|\overline{\vepsilon}_{k\setminus\{i\}}\|^{2} - 2 \langle \mvtheta, \, \overline{\vepsilon}_{l} \rangle \\
& \qquad \qquad + 2 \left\langle \mvtheta - \overline{\vepsilon}_{l}+ \left( {n_{k}-1 \over n_{k}} \right)^{2} \overline{\vepsilon}_{k\setminus\{i\}}, \, \vepsilon_{i} \right\rangle,
\end{align*}
where $\overline{\vepsilon}_{k\setminus\{i\}} = (n_{k}-1)^{-1} \sum_{j \in G^{\ast}_{k} \setminus\{i\}} \vepsilon_{j}$. Set $\zeta_{n} = 2\log(nK)$ and define 
\begin{align}
\mathcal B_{kl}^{(i)} =\Big\{& \|\overline{\vepsilon}_{l}\|^{2} \geq n_{l}^{-1} (p-2\sqrt{p\zeta_{n}}), \,\, \|\vepsilon_{i}\|^{2} \geq p - 2\sqrt{p \zeta_{n}}, \notag \\
& \|\overline{\vepsilon}_{k\setminus\{i\}}\|^{2} \leq (n_{k}-1)^{-1} (p + 2\sqrt{p \zeta_{n}} + 2\zeta_{n}), \,\, \langle \mvtheta, \overline{\vepsilon}_{l} \rangle \leq \sqrt{2 n_{l}^{-1} \zeta_{n}} \|\mvtheta\|, \notag \\
& \|\overline{\vepsilon}_{l} - ({1-n_{k}^{-1}})^{2}\overline{\vepsilon}_{k\setminus\{i\}}\|^{2} \leq (n_{l}^{-1}+n_{k}^{-1}) (p + 2\sqrt{p \zeta_{n}} + 2\zeta_{n}), \label{eqn:Z2Z3a} \\
& \langle \mvtheta, ({1-n_{k}^{-1}})^{2} \overline{\vepsilon}_{k\setminus\{i\}}-\overline{\vepsilon}_{l} \rangle \leq \sqrt{2 (n_l^{-1}+n_k^{-1})\zeta_n} \|\mvtheta\| \Big\}.\label{eqn:Z2Z3b}
\end{align}
Note that $\vepsilon_{i}, \overline{\vepsilon}_{l}$, and $\overline{\vepsilon}_{k\setminus\{i\}}$ are mutually independent. Thus conditional on $\overline{\vepsilon}_{l}$ and $\overline{\vepsilon}_{k\setminus\{i\}}$, we have 
\[
\langle \mvtheta - \overline{\vepsilon}_{l}+ ({1-n_{k}^{-1}})^{2} \overline{\vepsilon}_{k\setminus\{i\}}, \, -\vepsilon_{i} \rangle \sim N \Big( 0, \Big\| \mvtheta - \overline{\vepsilon}_{l}+ ({1-n_{k}^{-1}})^{2} \overline{\vepsilon}_{k\setminus\{i\}} \Big\|^{2} \Big).
\]
Then, on the event where~\eqref{eqn:Z2Z3a} and~\eqref{eqn:Z2Z3b} hold, we can bound 
\begin{align*}
& U^\ast := \Prob \Big( 2 \langle \mvtheta - \overline{\vepsilon}_{l}+ ({1-n_{k}^{-1}})^{2} \overline{\vepsilon}_{k\setminus\{i\}}, \, -\vepsilon_{i} \rangle \geq (1-\beta) \|\mvtheta\|^{2}  \, \Big| \,\overline{\vepsilon}_{l}, \overline{\vepsilon}_{k\setminus\{i\}} \Big) \\
& \qquad = \Phi^{c} \left( {(1-\beta) \|\mvtheta\|^{2} \over 2 \sqrt{\| \mvtheta \|^{2} + 2 \langle \mvtheta, ({1-n_{k}^{-1}})^{2} \overline{\vepsilon}_{k\setminus\{i\}}-\overline{\vepsilon}_{l} \rangle + \|\overline{\vepsilon}_{l} - ({1-n_{k}^{-1}})^{2}\overline{\vepsilon}_{k\setminus\{i\}}\|^{2} } } \right) \\
& \qquad \leq \Phi^{c} \left( {(1-\beta) \|\mvtheta\|^{2} \over 2 \sqrt{\| \mvtheta \|^{2} + 2 \sqrt{2(n_l^{-1}+n_k^{-1})\zeta_n} \|\mvtheta\| + (n_{l}^{-1}+n_{k}^{-1}) (p + 2\sqrt{p \zeta_{n}} + 2\zeta_{n}) } } \right),
\end{align*}
where $\Phi^{c}(t)$ denotes the tail probability $\mathbb P(Z\geq t)$ for a standard normal random variable $Z$. Note that $n_{l}^{-1}+n_{k}^{-1} \leq 2 m^{-1}$. Under the separation condition~\eqref{eqn:separation_bound} on the Gaussian centers, we have $\|\mvtheta\|^2 \geq 8 \log{n}$ and 
\[
2 \sqrt{2(n_l^{-1}+n_k^{-1})\zeta_n} \|\mvtheta\| \leq {2 \over \sqrt{m}} \|\mvtheta\|^2.
\]
Thus, on events~\eqref{eqn:Z2Z3a} and~\eqref{eqn:Z2Z3b}, we have
\[
U^\ast \leq \Phi^{c} \left( {(1-\beta) \|\mvtheta\|^{2} \over 2 \sqrt{(1+ {2\over \sqrt{m}}) \| \mvtheta \|^{2} + (n_{l}^{-1}+n_{k}^{-1}) (p + 2\sqrt{p \zeta_{n}} + 2\zeta_{n}) } } \right).
\]
Now under the separation condition~\eqref{eqn:separation_bound} and noting that $(1+\delta)(1+{2\over\sqrt{m}}) \leq 1+2\delta$, we see that
\[
{(1-\beta)^{2}\over 8(1+\delta) \log{n}}  \|\mv\theta\|^{4}  - \left(1+ {2 \over \sqrt{m}}\right) \|\mvtheta\|^{2} - r_{1} \geq 0,
\]
where 
\[
r_{1} = p ( n_{l}^{-1} + n_{k}^{-1} ) + 2 (\sqrt{p\zeta_{n}}+\zeta_{n}) ( n_{l}^{-1} + n_{k}^{-1} ).
\]
Hence we get 
\begin{align*}
U^\ast \leq \Phi^{c}(\sqrt{2(1+\delta)\log{n}}) \leq n^{-(1+\delta)},
\end{align*}
where the second inequality follows from the standard Gaussian tail bound $\Phi^{c}(x) \leq e^{-x^{2}/2}$ for $x \geq 0$. In addition, applying the probability tail bounds for $\chi^{2}$ distributions in Lemma~\ref{lem:tail_bound_chisquare}, we have $\Prob({\cB_{kl}^{(i)}}^{c}) \leq 6 / (n^{2} K^{2})$.
Now putting pieces together, we have 
\begin{align*}
\Prob(\cA^{c}) &\leq \sum_{1\leq k \neq l \leq K} \sum_{i \in G_{k}^{*}} \Prob({\cA_{kl}^{(i)}}^{c} \cap \cB_{kl}^{(i)}) + \Prob({\cB_{kl}^{(i)}}^{c}) \\
&\leq \sum_{1\leq k \neq l \leq K} \sum_{i \in G_{k}^{*}} \E \left[ U^\ast \vone(\eqref{eqn:Z2Z3a},~\eqref{eqn:Z2Z3b} \mbox{ hold}) \right] + {6 \over n} \\
&\leq {K^{2} \over n^{\delta}} + {8 \over n}.
\end{align*}

\subsection{Proof of Lemma~\ref{lem:bound_Tv}}
Without loss of generality, we may assume $\sigma = 1$. Recall that the column sums and row sums of matrix $B_{G^\ast_kG^\ast_l}$ are denoted by $\vc^{(k,l)}=\big(c^{(k,l)}_j:\, j\in G^\ast_l\big)$ and $\vr^{(k,l)}=\big(r^{(k,l)}_i:\, i\in G^\ast_k\big)$, respectively. In addition, $t^{(k,l)}=\sum_{j\in G^\ast_l}c^{(k,l)}_j =\sum_{i\in G^\ast_k} r^{(k,l)}_i$ is the total sum, and the construction of $B^\sharp$ in~\eqref{eqn:B_form} can be written as $[B_{G_l^\ast G_k^\ast}^{\sharp}]_{ij}= r^{(k,l)}_i c^{(k,l)}_j/t^{(k,l)}$, for any distinct pair $(k,l)\in[K]^2$. Under this notation, for each $\vv \in\Gamma_K$, we may write 
\[
T(\vv) = \sum_{k=1}^K \sum_{l\neq k}\sum_{i\in G^\ast_k}\sum_{j\in G^\ast_l} \frac{r^{(k,l)}_i c^{(k,l)}_j}{t^{(k,l)}}\,v_i\,v_j = \sum_{k=1}^K \sum_{l\neq k}\bigg\{\frac1{t^{(k,l)}} \Big(\sum_{i\in G^\ast_k}v_i\, r^{(k,l)}_i\Big)\Big(\sum_{j\in G^\ast_l} v_j\, c^{(k,l)}_j\Big)\bigg\}.
\]
Using once again the property $\sum_{i \in G_{k}^{\ast}} v_{i} = 0$ for all $k \in [K]$, we can simplify 
\begin{align*}
\sum_{j\in G^\ast_l} v_j\, c^{(k,l)}_j = & \sum_{j\in G^\ast_l} v_j \left[ -{n_{l}+n_{k} \over 2 n_{l}} \lambda + {n_{k} \over 2} (\|\overline{\vX}_{k}\|^{2} - \|\overline{\vX}_{l}\|^{2}) + n_{k} \langle \vX_{j}, \overline{\vX}_{l} - \overline{\vX}_{k}\rangle \right] \\
= & n_{k} \langle \overline{\vX}_{l} - \overline{\vX}_{k}, \sum_{j\in G^\ast_l} v_j \vX_{j} \rangle \\
= & n_{k} \langle \overline{\vX}_{l} - \overline{\vX}_{k}, \sum_{j\in G^\ast_l} v_j \vepsilon_{j} \rangle \\
= & n_{k} \langle \vmu_{l}-\vmu_{k}+\overline\vepsilon_{l}-\overline\vepsilon_{k}, \sum_{j \in G^\ast_l} v_j\, \vepsilon_j \rangle.
\end{align*}
Similarly, $\sum_{i\in G^\ast_k}v_i\, r^{(k,l)}_i\ = n_{l} \langle  \vmu_{k}-\vmu_{l}+\overline\vepsilon_{k}-\overline\vepsilon_{l}, \sum_{i\in G^\ast_k} v_i\, \vepsilon_i \rangle$. Then 
\[
\sum_{i\in G^\ast_k} \sum_{j\in G^\ast_l} v_i v_j \, r^{(k,l)}_i c^{(k,l)}_j = -n_{k} n_{l} (T_{1,kl} + T_{2,kl} + T_{3,kl}),
\]
and 
\begin{align}\label{Eqn:T_decom}
T(\vv) =  -\sum_{k=1}^K \sum_{l\neq k}\bigg\{\frac{n_kn_l}{t^{(k,l)}}\, (T_{1,kl} + T_{2,kl} + T_{3,kl})\bigg\},
\end{align}
where 
\begin{align*}
T_{1,kl}(\vv) = & \bigg[\sum_{i\in G^\ast_k}  v_{i}  \langle \vmu_{k}-\vmu_{l}, \vepsilon_{i} \rangle  \bigg]\cdot \bigg[\sum_{j \in G^\ast_l} v_{j}\langle \vmu_{k}-\vmu_{l}, \vepsilon_{j} \rangle\bigg], \\
T_{2,kl}(\vv) = & \bigg[\sum_{i\in G^\ast_k} v_{i} \langle \overline\vepsilon_{k}-\overline\vepsilon_{l}, \vepsilon_{i} \rangle \bigg]\cdot \bigg[ \sum_{j \in G^\ast_l} v_{j} \langle \overline\vepsilon_{k}-\overline\vepsilon_{l},\vepsilon_{j} \rangle\bigg], \\
T_{3,kl}(\vv) = & \bigg[\sum_{i\in G^\ast_k}  v_{i} \langle \vmu_{k}-\vmu_{l}, \vepsilon_{i} \rangle \bigg] \cdot \bigg[ \sum_{j \in G^\ast_l}  v_{j} \langle \overline\vepsilon_{k}-\overline\vepsilon_{l},\vepsilon_{j} \rangle\bigg] \\
&\qquad\qquad\qquad +  \bigg[\sum_{i\in G^\ast_k}  v_{i} \langle \overline\vepsilon_{k}-\overline\vepsilon_{l}, \vepsilon_{i} \rangle \bigg] \cdot \bigg[ \sum_{j \in G^\ast_l}  v_{j} \langle \vmu_{k}-\vmu_{l},\vepsilon_{j} \rangle\bigg].
\end{align*}
To bound these three terms, we will use the following lemma, whose proof is deferred to the end of this section.
\begin{lem}[Uniform high probability bounds for random fluctuation terms]\label{lem:random_fluctuation}
For any $\delta>0$, it holds with probability at least $1-4K^2 n^{-\delta}$ that for any $\vv\in \Gamma_K$ and any distinct pair $(k,l)\in [K]^2$,
\begin{align}
\bigg|\sum_{i\in G^\ast_k}  v_{i}  \langle \vmu_{k}-\vmu_{l}, \vepsilon_{i} \rangle  \bigg| & \leq \|\vmu_k - \vmu_l\| \, \big(n_k+\sqrt{2n_k \log n} + 2 \log n\big)^{1/2}\,\Big( \sum_{i\in G^\ast_k} v_{i}^{2} \Big)^{1/2},\label{Eqn:ineq_a} \\
 \bigg|\sum_{i\in G^\ast_k} v_{i} \,\|\vepsilon_{i}\|^{2}\bigg| & \leq C p^{1/2} [n_{k}^{1/2}+\log^{2}(n)\,]\Big( \sum_{i\in G^\ast_k} v_{i}^{2} \Big)^{1/2},\label{Eqn:ineq_b}\\
\sum_{i\in G^\ast_k}  v_{i} \langle \overline\vepsilon_{l}, \vepsilon_{i} \rangle &\leq C \sqrt{(p+\log{n})\, n_{k} \over n_{l}}\,\Big( \sum_{i\in G^\ast_k} v_{i}^{2} \Big)^{1/2},\label{Eqn:ineq_c}\\
\bigg|\sum_{i\in G^\ast_k}  v_{i} \Big\langle {n_{k}-1 \over n_{k}} \overline\vepsilon_{k\setminus\{i\}},\vepsilon_{i} \Big\rangle \bigg| & \leq C p^{1/2} (\delta \log{n})^{1/2}\Big( \sum_{i\in G^\ast_k} v_{i}^{2} \Big)^{1/2},\label{Eqn:ineq_d}
\end{align}
for some universal constant $C>0$. 
\end{lem}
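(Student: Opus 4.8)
The plan is to establish all four estimates by a common two-step scheme. The key structural observation is that each left-hand side is a linear functional of the coordinates $\{v_i:\,i\in G_k^\ast\}$ (for \eqref{Eqn:ineq_d} this requires a preliminary algebraic reduction), so that the uniformity over $\vv\in\Gamma_K$ costs nothing: by Cauchy--Schwarz in the $G_k^\ast$-coordinates --- centering the coefficient vector via the defining relation $\sum_{i\in G_k^\ast}v_i=0$ whenever that helps --- each bound reduces to a high-probability estimate on the Euclidean norm of an explicit random vector indexed by $i\in G_k^\ast$. Those norms are then controlled by the $\chi^2$ tail inequalities of Lemma~\ref{lem:tail_bound_chisquare} together with elementary Gaussian concentration, and a union bound over the at most $K(K-1)$ ordered pairs $(k,l)$ and over the four displays keeps the total failure probability below $4K^2n^{-\delta}$. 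Throughout we take $\sigma=1$ without loss of generality.

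For \eqref{Eqn:ineq_a}: since $k\neq l$, the scalars $\{\langle\vmu_k-\vmu_l,\vepsilon_i\rangle:\,i\in G_k^\ast\}$ are i.i.d.\ $N(0,\|\vmu_k-\vmu_l\|^2)$, Cauchy--Schwarz bounds the left side by $\big(\sum_{i\in G_k^\ast}\langle\vmu_k-\vmu_l,\vepsilon_i\rangle^2\big)^{1/2}\big(\sum_{i\in G_k^\ast}v_i^2\big)^{1/2}$, and $\sum_{i\in G_k^\ast}\langle\vmu_k-\vmu_l,\vepsilon_i\rangle^2\big/\|\vmu_k-\vmu_l\|^2\sim\chi^2_{n_k}$, so Lemma~\ref{lem:tail_bound_chisquare} with a deviation parameter of order $\delta\log n$ gives the claim. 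For \eqref{Eqn:ineq_c}: write $\sum_i v_i\langle\overline{\vepsilon}_l,\vepsilon_i\rangle=\langle\overline{\vepsilon}_l,\sum_i v_i\vepsilon_i\rangle$, condition on $\overline{\vepsilon}_l$ (independent of $\{\vepsilon_i:i\in G_k^\ast\}$ because $k\neq l$), apply Cauchy--Schwarz to bound by $\big(\sum_i\langle\overline{\vepsilon}_l,\vepsilon_i\rangle^2\big)^{1/2}\big(\sum_i v_i^2\big)^{1/2}$, observe that conditionally $\sum_i\langle\overline{\vepsilon}_l,\vepsilon_i\rangle^2\sim\|\overline{\vepsilon}_l\|^2\,\chi^2_{n_k}$, and finish with $\|\overline{\vepsilon}_l\|^2\lesssim(p+\log n)/n_l$ and $\chi^2_{n_k}\lesssim n_k+\log n$, both from Lemma~\ref{lem:tail_bound_chisquare}. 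For \eqref{Eqn:ineq_b}: the decisive point is to center first --- because $\sum_i v_i=0$ one may replace $\|\vepsilon_i\|^2$ by $\|\vepsilon_i\|^2-p$ --- so after Cauchy--Schwarz it remains to show $\sum_{i\in G_k^\ast}(\|\vepsilon_i\|^2-p)^2\lesssim p\,(n_k+\log^4 n)$ with high probability; the $p\,n_k$ term is the mean (each summand has mean $2p$), while the $p\log^4 n$ term reflects the heavy, sub-Weibull-of-order-$\tfrac12$ tail of $(\|\vepsilon_i\|^2-p)^2$, controlled by a Bernstein-type inequality for sums of such variables together with a union bound over $i\in G_k^\ast$.

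The one genuinely delicate case is \eqref{Eqn:ineq_d}, where $\overline{\vepsilon}_{k\setminus\{i\}}$ depends on $i$. The first step is the algebraic identity $\tfrac{n_k-1}{n_k}\overline{\vepsilon}_{k\setminus\{i\}}=\overline{\vepsilon}_k-\tfrac1{n_k}\vepsilon_i$, which gives
\[
\sum_{i\in G_k^\ast}v_i\Big\langle\tfrac{n_k-1}{n_k}\overline{\vepsilon}_{k\setminus\{i\}},\vepsilon_i\Big\rangle=\frac1{n_k}\sum_{\substack{i,j\in G_k^\ast\\ i\neq j}}v_i\langle\vepsilon_i,\vepsilon_j\rangle=\frac1{n_k}\sum_{i\in G_k^\ast}v_i\langle\vepsilon_i,W_i\rangle,\qquad W_i:=\sum_{j\in G_k^\ast\setminus\{i\}}\vepsilon_j,
\]
the diagonal $\|\vepsilon_i\|^2$ contributions cancelling exactly. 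Since $W_i\sim N(0,(n_k-1)I_p)$ is independent of $\vepsilon_i$, Cauchy--Schwarz reduces the problem to a high-probability upper bound on $\sum_{i\in G_k^\ast}\langle\vepsilon_i,W_i\rangle^2$, a degree-four Gaussian chaos in $\{\vepsilon_j\}_{j\in G_k^\ast}$ with mean $n_k(n_k-1)p$; conditioning on the $\vepsilon_i$'s (so that each $\langle\vepsilon_i,W_i\rangle$ is Gaussian with known variance) or, alternatively, a Hanson--Wright-type inequality yields $\sum_{i\in G_k^\ast}\langle\vepsilon_i,W_i\rangle^2\lesssim n_k^2\,p\,\delta\log n$ with probability $1-n^{-\delta}$, and dividing by $n_k$ produces the stated bound. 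I expect this to be the main obstacle: obtaining the correct $p\,\delta\log n$ inflation factor for the chaos $\sum_i\langle\vepsilon_i,W_i\rangle^2$ in spite of the dependence among the $W_i$'s, and, similarly, the matching concentration of $\sum_i(\|\vepsilon_i\|^2-p)^2$ in \eqref{Eqn:ineq_b} with its $\log^4 n$ correction; once those two tail estimates are in hand, the remaining assembly --- Cauchy--Schwarz and the $\chi^2$ bounds of Lemma~\ref{lem:tail_bound_chisquare}, followed by the union bound --- is routine.
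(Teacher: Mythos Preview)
Your proposal is correct, and for inequality~\eqref{Eqn:ineq_a} it matches the paper's proof exactly. For the remaining three displays you take a genuinely different---and in places more elementary---route than the paper, so a brief comparison is warranted.

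The paper does \emph{not} reduce everything to scalar tail bounds via Cauchy--Schwarz. Instead it treats the supremum over $\vv_k\in\bV_k$ directly with process-level tools: for~\eqref{Eqn:ineq_b} it invokes Adamczak's Talagrand-type inequality for unbounded empirical processes (using $\E\|G_1\|_{\bV_k}\le(2n_kp)^{1/2}$, $\tau_1^2\le 2p$, and $\|\mathsf M_1\|_{\psi_1}\lesssim p^{1/2}\log n_k$); for~\eqref{Eqn:ineq_c} it views $G_2$ as a Gaussian process conditionally on $\overline\vepsilon_l$ and applies Borell--Sudakov--Tsirel'son together with Dudley's entropy integral; and for~\eqref{Eqn:ineq_d} it writes $U_1(\vv_k)=\vepsilon^T(A\otimes I_p)\vepsilon$ and applies a uniform Hanson--Wright inequality (Lemma~\ref{lem:uniform_hw_quadratic}), bounding $\|A\|_{\op}\le n_k^{1/2}$, $\|\vepsilon\|_{\cA}^2\le 4n_k^2p$, and $\E\|U_1\|_{\bV_k}\le n_kp^{1/2}$.

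Your approach trades these supremum-level tools for scalar concentration: you pay the Cauchy--Schwarz price once (which is free for the uniformity in $\vv$) and then need tail bounds for $\sum_{i}(\|\vepsilon_i\|^2-p)^2$ in~\eqref{Eqn:ineq_b} and for the degree-four chaos $\sum_i\langle\vepsilon_i,W_i\rangle^2$ in~\eqref{Eqn:ineq_d}. Both are obtainable (the first by a truncated-Bernstein argument for sub-Weibull summands, the second for instance by Rademacher symmetrization $\vepsilon_i\mapsto\xi_i\vepsilon_i$ followed by Hanson--Wright for the quadratic form $\mvxi^T(M-D)^2\mvxi$ with $M=\mathcal E_k^T\mathcal E_k$), and they yield the stated bounds with the correct $\log$ powers. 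Your route for~\eqref{Eqn:ineq_c} is strictly simpler than the paper's. For~\eqref{Eqn:ineq_d}, be aware that your ``conditioning on the $\vepsilon_i$'s'' suggestion does not directly decouple the $W_i$'s; the Hanson--Wright or decoupling alternative you mention is what actually does the work, and it requires roughly the same effort as the paper's uniform Hanson--Wright argument.
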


\vspace{0.5em}
\noindent{\bf Bound $T_{1,kl}$:} By applying the Cauchy-Schwarz inequality and inequality~\eqref{Eqn:ineq_a}, we can bound
\begin{align*}
&|T_{1,kl}(\vv)| =  \Big| \sum_{i\in G^\ast_k} v_{i} \langle \vmu_{k}-\vmu_{l}, \vepsilon_{i} \rangle \sum_{j \in G^\ast_l} v_{j} \langle \vmu_{k}-\vmu_{l}, \vepsilon_{j} \rangle \Big| \\
\leq &  \|\vmu_k - \vmu_l\| ^2\, \Big( \sum_{i\in G^\ast_k} v_{i}^{2} \Big)^{1/2} \, \Big( \sum_{j\in G^\ast_l} v_{j}^{2} \Big)^{1/2} \big(n_k+\sqrt{2n_k \log n} + 2 \log n\big)^{1/2} \,\big(n_l+\sqrt{2n_l \log n} + 2 \log n\big)^{1/2}.
\end{align*}
Throughout the proof, we can always work under the event 
\begin{align}\label{Eqn:event_T}
\{t^{(k,l)} \geq \beta n_{k} n_{l} \|\vmu_{k}-\vmu_{l}\|^{2} / 2 \mbox{ for all distinct pairs } (k,l) \in [K]^{2} \mbox{ and } i \in G_{k}^{*}\},
\end{align}
which according to the choice of $\lambda^\sharp$ in \eqref{Eqn:lambda_def} after Lemma~\ref{lem:separation_bound}, holds with probability at least $1-K^2n^{-\delta} -8n^{-1}$.
Under this event, we get a uniform bound for first sum of $T_{1,kl}$'s in the decomposition~\eqref{Eqn:T_decom} of $T(\vv)$ for all $\vv \in \Gamma_{K}$: 
\begin{align*}
\left| \sum_{k,l=1}^{K} {n_{k} n_{k} \over t^{(k,l)}} T_{1,kl}(\vv) \right|
\leq &\, \frac{2}{\beta} \bigg\{\sum_{k=1}^K \Big( \sum_{i\in G^\ast_k} v_{i}^{2} \Big)^{1/2}  \big(n_k+\sqrt{2n_k \log n} + 2 \log n\big)^{1/2} \bigg\}\\
&\,\cdot \bigg\{\sum_{l=1}^K \Big( \sum_{j\in G^\ast_l} v_{j}^{2} \Big)^{1/2}  \big(n_l+\sqrt{2n_l \log n} + 2 \log n\big)^{1/2} \bigg\}
\\
\overset{(a)}{\leq}&\, {2 \over \beta}\bigg(  \sum_{k=1}^{K}  \sum_{i\in G^\ast_k} v_{i}^{2} \bigg)^{1/2} \cdot \bigg(\sum_{k=1}^K \big(n_k + \sqrt{2n_k \log n} +2\log n\big)\bigg)^{1/2} \\
&\cdot  \bigg( \sum_{l=1}^K \sum_{j\in G^\ast_l} v_{j}^{2} \bigg)^{1/2} \cdot \bigg(\sum_{l=1}^K \big(n_l + \sqrt{2n_l \log n} +2\log n\big)\bigg)^{1/2}.\\
\leq &\, {2 \over \beta} \big(n+\sqrt{2nK\log n} + 2K\log n\big)\,\|\vv\|^2,
\end{align*}
where step (a) is due to the Cauchy-Schwarz inequality, and the last step uses the identity $\sum_{k=1}^K n_k= n$ and inequality $\sum_{k=1}^K\sqrt{n_k} \leq \sqrt{K\sum_{k=1}^K n_k}$.

\vspace{0.5em}
\noindent{\bf Bound $T_{2,kl}$:} Due to the symmetry, we only need to analyze the first sum in $T_{2,kl}$, which can be further decomposed as
\begin{align*}
\sum_{i\in G^\ast_k} v_{i} \langle \overline\vepsilon_{k}-\overline\vepsilon_{l}, \vepsilon_{i} \rangle
&= \sum_{i\in G^\ast_k} v_{i} \Big\langle {1 \over n_{k}} \vepsilon_{i} + {n_{k}-1 \over n_{k}} \overline\vepsilon_{k\setminus\{i\}}-\overline\vepsilon_{l}, \vepsilon_{i}  \Big\rangle \\
&=\sum_{i\in G^\ast_k} \frac{v_{i}}{n_k} \|\vepsilon_j\|^2 -    \sum_{i\in G^\ast_k} v_i \langle \overline\vepsilon_{l}, \vepsilon_{i}  \rangle   +  \sum_{i\in G^\ast_k} v_i \Big\langle {n_{k}-1 \over n_{k}} \overline\vepsilon_{k\setminus\{i\}}, \vepsilon_{i}  \Big\rangle  \\
& =: G_1(\vv) + G_2(\vv) + G_3 (\vv),
\end{align*}
where the three terms $G_1(\vv)$, $G_2(\vv)$ and $G_3(\vv)$ are respectively bounded by using inequalities~\eqref{Eqn:ineq_b}, \eqref{Eqn:ineq_c} and \eqref{Eqn:ineq_d} in Lemma~\ref{lem:random_fluctuation}. Therefore, we can reach
\begin{align*}
\sum_{i\in G^\ast_k} v_{i} \langle \overline\vepsilon_{k}-\overline\vepsilon_{l}, \vepsilon_{i} \rangle
 &\leq C\bigg( \sqrt{\frac{p}{n_k}} + \frac{\log^2(n)\sqrt{p}}{n_k} + \sqrt{\frac{(p+\log n) \log n_k}{n_l}} + \sqrt{\delta p \log n}\bigg) \Big( \sum_{i\in G^\ast_k} v_{i}^{2} \Big)^{1/2}\\
 & \leq C' \bigg(\sqrt{\delta p \log n} + \log^2(n)\sqrt{\frac{p}{\underline n}}\bigg)\Big( \sum_{i\in G^\ast_k} v_{i}^{2} \Big)^{1/2}.
\end{align*}
This implies the following bound on $T_{2,kl}$ due to the symmetry,
\begin{align*}
|T_{2,kl}(\vv)| \leq C'' \bigg(\delta p \log n + \frac{p\log^4(n)}{\underline n}\bigg)\Big( \sum_{i\in G^\ast_k} v_{i}^{2} \Big)^{1/2}\Big( \sum_{j\in G^\ast_l} v_{j}^{2} \Big)^{1/2}.
\end{align*}
Then we may obtain by using the lower bound condition in Lemma~\ref{lem:separation_bound} as $\|\vmu_k-\vmu_l\|^2 \geq C_1 (1-\beta)^{-1}\sqrt{(1+\delta)\,p\log n / m}$ that under the event~\eqref{Eqn:event_T},
\begin{align*}
\left| \sum_{k,l=1}^{K} {n_{k} n_{k} \over t^{(k,l)}} T_{2,kl}(\vv) \right|
\leq &\,\frac{C_2(1-\beta)}{\beta}\Big(\delta \sqrt{m p \log n} + \frac{\sqrt{mp \log^7 n}}{\underline n}\Big) \bigg(\sum_{k=1}^K \Big(\sum_{i\in G^\ast_k} v_{i}^{2} \Big)^{1/2}\bigg)\bigg(\sum_{l=1}^K\Big( \sum_{j\in G^\ast_l} v_{j}^{2} \Big)^{1/2}\bigg)\\
\leq &\, \frac{C_2(1-\beta)K}{\beta}\Big(\delta \sqrt{m p \log n} + \frac{\sqrt{mp \log^7 n}}{\underline n}\Big)  \|\vv\|^2,
\end{align*}
where the last step is due to the Cauchy-Schwarz inequality.

\vspace{0.5em}
\noindent{\bf Bound $T_{3,kl}$:} 
Note that term $|T_{3,kl}(\vv)|$ satisfies
\begin{align*}
|T_{3,kl}(\vv)| \leq &\, \frac{1}{2}\bigg( \sum_{i\in G^\ast_k} v_{i} \langle \vmu_{k}-\vmu_{l}, \vepsilon_{i} \rangle\bigg)^2 + \frac{1}{2}\bigg( \sum_{j\in G^\ast_l} v_{j} \langle \vmu_{k}-\vmu_{l}, \vepsilon_{j} \rangle\bigg)^2 \\
&\, +  \frac{1}{2}\bigg( \sum_{i\in G^\ast_k} v_{i} \langle \overline{\vepsilon}_{k}-\overline{\vepsilon}_{l}, \vepsilon_{i} \rangle\bigg)^2  +   \frac{1}{2}\bigg( \sum_{j\in G^\ast_l} v_{j} \langle \overline{\vepsilon}_{k}-\overline{\vepsilon}_{l}, \vepsilon_{j} \rangle\bigg)^2.
\end{align*}
Therefore, $|T_{3,kl}(\vv)|$ can be bounded by the sum of the upper bounds for $|T_{1,kl}(\vv)|$ and $|T_{2,kl}(\vv)|$.

Putting all pieces together, we can finally reach
\begin{align*}
|T(\vv)| &\leq \bigg|\sum_{k=1}^K \sum_{l\neq k}\frac{n_kn_l}{t^{(k,l)}}\, T_{1,kl}\bigg|  +  \bigg|\sum_{k=1}^K \sum_{l\neq k}\frac{n_kn_l}{t^{(k,l)}}\, T_{2,kl}\bigg|   +   \bigg|\sum_{k=1}^K \sum_{l\neq k}\frac{n_kn_l}{t^{(k,l)}}\, T_{3,kl}\bigg|\\
&\leq \frac{C_3}{\beta} \|\vv\|^2 \Big(n + K\log n + (1-\beta) K \delta \sqrt{m p \log n} + \frac{\sqrt{mp \log^7 n}}{\underline n}\Big).
\end{align*}

\subsection{Proof of Lemma~\ref{lem:random_fluctuation}}

We can apply the Cauchy-Schwarz inequality to obtain
\begin{align*}
&\|\vmu_k-\vmu_l\|^{-1}\bigg|\sum_{i\in G^\ast_k}  v_{i}  \langle \vmu_{k}-\vmu_{l}, \vepsilon_{i} \rangle  \bigg| \leq \Big( \sum_{i\in G^\ast_k} v_{i}^{2} \Big)^{1/2} \Big( \sum_{i\in G^\ast_k} \Big\langle {\vmu_{k}-\vmu_{l} \over \|\vmu_{k}-\vmu_{l}\|}, \vepsilon_{i} \Big\rangle^{2} \Big)^{1/2}.
\end{align*}
Since 
\[
\Big\langle {\vmu_{k}-\vmu_{l} \over \|\vmu_{k}-\vmu_{l}\|}, \vepsilon_{i} \Big\rangle \stackrel{\text{i.i.d.}}{\sim} N(0, 1), \quad i = 1,\dots,n,
\]
we obtain by Lemma~\ref{lem:tail_bound_chisquare} and a union bound argument that with probability at least $1-K^2n^{-1}$,
\[
 \sum_{i\in G^\ast_k} \Big\langle {\vmu_{k}-\vmu_{l} \over \|\vmu_{k}-\vmu_{l}\|}, \vepsilon_{i} \Big\rangle^{2}  \leq n_{k} + \sqrt{2n_{k}\log{n}} + 2\log{n} \quad \mbox{for all } k, l \in [K].
\]
A combination of the preceding two displays yields the first claimed inequality~\eqref{Eqn:ineq_a}.

Since $\sum_{i\in G^\ast_k} v_{i} = 0$ for any $\vv\in\Gamma_K$, we can also write the left hand side of inequality~\eqref{Eqn:ineq_b} as $G_{1}(\vv_k) = \sum_{i\in G^\ast_k} v_{i} (\|\vepsilon_{i}\|^{2}-p)$, which can be viewed as a centered empirical process indexed by $\vv_{k}\in \mathbb R^{n_k}$, the restriction $\vv_{\restriction G^\ast_k}$ of $\vv\in\Gamma_K$ onto $G_k^\ast$. We may assume without loss of generality that $\vv_k\in \bV_k:= \{\vv_{\restriction G^\ast_k} : \vv \in \Gamma_K,\,\|\vv_k\|=1\}$.
By Theorem 4 in~\cite{adamczak2008}, there exists a universal constant $C$ such that for any $t > 0$, 
\[
\Prob \Big( \|G_{1}\|_{\bV_{k}} \geq 2 \E[\|G_{1}\|_{\bV_{k}}] + t \Big) \leq \exp \Big( -{t^{2} \over 3 \tau_{1}^{2}} \Big) + 3 \exp \Big(-{t \over C \|\mathsf{M}_{1}\|_{\psi_{1}}}\Big).
\]
where $\|G_{1}\|_{\bV_{k}} = \sup_{\vv_{k} \in \bV_{k}} |G_{1}(\vv_{k})|$ and $\tau_{1}^{2} = \sup_{\vv_{k} \in \bV_{k}} \sum_{i \in G^{\ast}_{k}} v_{i}^{2} \E[\|\vepsilon_{i}\|^{2}-p]^{2} \leq 2p$, and $\mathsf{M}_{1} = \max_{i \in G^\ast_k} \max_{\vv_{k} \in \bV_{k}} |v_{i} (\|\vepsilon_{i}\|^{2}-p)| \leq \max_{i \in G^\ast_k} |\|\vepsilon_{i}\|^{2}-p|$. By the maximal inequality in Lemma 2.2.2 in~\cite{vandervaartwellner1996} and Lemma~\ref{lem:product_two_gaussian_psi1_norm}, we have 
\[
\|\mathsf{M}_{1}\|_{\psi_{1}} \leq C \log(n_{k}) \max_{i \in G^\ast_k} \|\|\vepsilon_{i}\|^{2}-p\|_{\psi_{1}} \leq C p^{1/2} \log(n_{k}).
\]
By the Cauchy-Schwarz inequality, we have for all $\vv \in \bV_{k}$, 
\[
\Big| \sum_{i\in G^\ast_k} v_{i} (\|\vepsilon_{i}\|^{2}-p) \Big| \leq  \Big( \sum_{i\in G^\ast_k} v_{i} \Big)^{1/2} \Big( \sum_{i\in G^\ast_k} (\|\vepsilon_{i}\|^{2}-p)^{2} \Big)^{1/2} \leq \Big( \sum_{i\in G^\ast_k} (\|\vepsilon_{i}\|^{2}-p)^{2} \Big)^{1/2}.
\]
Then Jensen's inequality implies that 
\[
\E[\|G_{1}\|_{\bV_{k}}] \leq \Big[ \sum_{i\in G^\ast_k} \E(\|\vepsilon_{i}\|^{2}-p)^{2} \Big]^{1/2} = (2n_{k}p)^{1/2}.
\]
Thus with probability at least $1-4n^{-1}$, we have 
\begin{equation}
\label{eqn:G1_bound}
\|G_{1}\|_{\bV_k} \leq C p^{1/2} [n_{k}^{1/2}+\log^{2}(n)],
\end{equation}
which entails the second claimed inequality~\eqref{Eqn:ineq_b}.

Next we prove the third claimed inequality. Note that conditional on $\overline\vepsilon_{l}$, $G_{2}(\vv_{k}) := \langle \overline\vepsilon_{l}, \sum_{i\in G^\ast_k}  v_{i} \vepsilon_{i} \rangle \sim N(0, \|\overline\vepsilon_{l}\|^{2} \sum_{i\in G^\ast_k}  v_{i}^{2})$ is a centered Gaussian process indexed by $\vv_{k} \in \bV_{k}$. By the Borell-Sudakov-Tsirel'son inequality (cf. Theorem 2.5.8 in ~\cite{GineNickl2016}), we have 
\[
\Prob \Big( \|G_{2}\|_{\bV_{k}} \geq \E[\|G_{2}\|_{\bV_{k}} \,|\, \overline{\vepsilon}_{l}] + \tau_{2} \sqrt{2 \log{n}} \,\, \Big|\, \overline{\vepsilon}_{l} \Big) \leq n^{-1},
\]
where $\tau_{2}^{2} = \|\overline{\vepsilon}_{l}\|^{2} \sup_{\vv_{k} \in \bV_{k}} \sum_{i \in G^{\ast}_{k}} v_{i}^{2} \leq  \|\overline{\vepsilon}_{l}\|^{2}$. Then Dudley's entropy integral bound (cf. Corollary 2.2.8 in~\cite{vandervaartwellner1996}) yields that 
\[
\E[\|G_{2}\|_{\bV_{k}} \,|\, \overline{\vepsilon}_{l}] \leq C \|\overline{\vepsilon}_{l}\| n_{k}^{1/2},
\] 
where we have used the fact that the $\varepsilon$-covering entropy of the unit sphere in $\bR^{n_k}$ is at most $Cn_k \log(1/\varepsilon)$ for any $\varepsilon\in(0,1)$.
Combining the last two displays with the inequality 
\[
\Prob (\|\overline{\vepsilon}_{l}\|^{2} \geq n_{l}^{-1} (p + 2\sqrt{p\log{n}} + 2\log{n}) ) \leq n^{-1},
\]
and a union bound argument, we get with probability at least $1-K^2n^{-1}$,
\begin{equation}
\label{eqn:G2_bound}
\|G_{2}\|_{\bV_{k}} \leq C \sqrt{(p+\log{n}) n_{k} \over n_{l}},
\end{equation}
implying the third inequality~\eqref{Eqn:ineq_c}.

Now we prove the last inequality. Note that 
\[
\sum_{i\in G^\ast_k}  v_{i} \Big\langle {n_{k}-1 \over n_{k}} \overline\vepsilon_{k\setminus\{i\}},\vepsilon_{i} \Big\rangle  = {1 \over n_{k}} \sum_{\{(i, j) \in G^\ast_k : i \neq j\}} v_{i} \langle \vepsilon_{i}, \vepsilon_{j} \rangle =: {1 \over n_{k}} U_{1}(\vv_{k}),
\]
where 
\[
U_{1}(\vv_{k}) = \sum_{\{(i, j) \in G^\ast_k : i \neq j\}} {1\over2} (v_{i}+v_{j}) \langle \vepsilon_{i}, \vepsilon_{j} \rangle
\]
is a degenerate $U$-process of order two. To simplify the notation, we may assume $G_k^\ast=\{1,\ldots,n_k\}$ in the rest of this proof.
Applying Lemma~\ref{lem:uniform_hw_quadratic} with 
\[
\cA = \{ A  \, \otimes \, \Id_{p} \,|\, A = \{a_{ij}\}_{i,j \in [n_{k}]}, \, a_{ij} = (v_{i}+v_{j})/2, \, \vv_k \in \bV_{k} \},
\] 
we get 
\[
\Prob\Big( \Big| \|U_{1}\|_{\bV_{k}} - \E[\|U_{1}\|_{\bV_{k}}] \Big| \geq t\Big) \leq 2 \exp \Big[ -C \min \Big( {t^{2} \over \|\vepsilon\|_{\cA}^{2}}, {t \over \sup_{A \in \cA} \|A  \, \otimes \, \Id_{p}\|_{\op}} \Big) \Big]
\]
where $\|U_{1}\|_{\bV_{k}} = \sup_{\vv_k\in \bV_{k}} U_{1}(\vv_{k})$, $\vepsilon^{T} = (\vepsilon_{1}^{T},\dots,\vepsilon_{n_{k}}^{T})$, and 
\[
\|\vepsilon\|_{\cA} = \E \Big[ \sup_{A} \Big\|(A  \, \otimes \, \Id_{p} + A^{T}  \, \otimes \, \Id_{p}) \vepsilon \Big\| \Big].
\] 
By the Cauchy-Schwarz inequality,
\begin{align*}
\|A\|_{\op} = & \max_{\|\vu\|=1} \vu^{T} A \vu = \max_{\|\vu\|=1} \sum_{i,j=1}^{n_{l}} u_{i} u_{j} {v_{i}+v_{j} \over 2} \\
= & \max_{\|\vu\|=1} \Big( \sum_{i=1}^{n_{k}} u_{i} v_{i} \Big) \Big( \sum_{j=1}^{n_{k}} u_{j} \Big) \leq \max_{\|\vu\|=1} \Big( \sum_{i=1}^{n_{k}} u_{i}^{2} \Big)^{1/2} \Big( \sum_{i=1}^{n_{k}} v_{i}^{2} \Big)^{1/2} \Big( \sum_{j=1}^{n_{k}} u_{j}^{2} \Big)^{1/2} n_{k}^{1/2} \leq n_{k}^{1/2}.
\end{align*}
Since $\| A  \, \otimes \, \Id_{p} \|_{\op} = \|A\|_{\op}$, we have  
\[
\sup_{A} \| (A  \, \otimes \, \Id_{p}) \vepsilon \| \leq \sup_{A} \| A  \, \otimes \, \Id_{p} \|_{\op} \|\vepsilon\| \leq n_{l}^{1/2} \|\vepsilon\|.
\]
Then Jensen's inequality yields that 
\[
\|\vepsilon\|_{\cA}^{2} \leq 4 n_{k} \E[\|\vepsilon\|^{2}] = 4 n_{k}^{2} p.
\]
To bound $\E[\|U_{1}\|_{\bV_{k}}]$, we note that 
\begin{align*}
|U_{1}(\vv_{k})| &= \bigg| \sum_{j=1}^{n_{k}} v_{j} \ \Big\langle \vepsilon_{j}, \sum_{i\neq j, i\in [n_k]} \vepsilon_{j} \Big\rangle \bigg|\leq \bigg(\sum_{j=1}^{n_k} v_j^2\bigg)^{1/2} \bigg(\sum_{j=1}^{n_k}\Big\langle \vepsilon_{j}, \sum_{i\neq j, i\in [n_k]} \vepsilon_{j} \Big\rangle^2\bigg)^{1/2}\\
&\leq \bigg(\sum_{j=1}^{n_k}\Big\langle \vepsilon_{j}, \sum_{i\neq j, i\in [n_k]} \vepsilon_{j} \Big\rangle^2\bigg)^{1/2}.
\end{align*}
From Jensen's inequality and the independence between $\varepsilon_j$ and $\sum_{i\neq j, i\in [n_k]} \vepsilon_{j}$, we have
\begin{align*}
\E[\|U_{1}\|_{\bV_{k}}] & \leq \bigg(\sum_{j=1}^{n_k}\E\bigg[\Big\langle \vepsilon_{j}, \sum_{i\neq j, i\in [n_k]} \vepsilon_{j} \Big\rangle^2\bigg]  \bigg)^{1/2} =\bigg(\sum_{j=1}^{n_k}\E\bigg[\Big\|\sum_{i\neq j, i\in [n_k]} \vepsilon_{j} \Big\|^2\bigg]  \bigg)^{1/2} \leq n_k p^{1/2}.
\end{align*}
Thus we see that with probability at least $1-2n^{-\delta}$,
\begin{equation}
\label{eqn:U1_bound}
\|U_{1}\|_{\bV_{k}} \leq C n_{k} p^{1/2} (\delta \log{n})^{1/2},
\end{equation}
which implies the last claimed inequality~\eqref{Eqn:ineq_d}.

\section{Supporting lemmas}
\label{sec:supporting_lemmas}

\begin{lem}[Tail bound for $\chi^{2}$ distributions]
\label{lem:tail_bound_chisquare}
If $\vZ \sim N(\vzero, \Id_{p})$, then for all $t > 0$,
\begin{align*}
\Prob(\|\vZ\|^{2} \geq p + 2\sqrt{pt} + 2t) & \leq e^{-t}, \\
\Prob(\|\vZ\|^{2} \leq p - 2\sqrt{pt}) &\leq e^{-t}.
\end{align*}
\end{lem}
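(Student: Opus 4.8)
This is the classical Laurent--Massart $\chi^2$ deviation inequality, so the plan is to run the standard Chernoff/Cram\'er argument directly on the \emph{exact} moment generating function, which already produces the stated constants with no further slack. First I would record that $\|\vZ\|^2 = \sum_{i=1}^p Z_i^2$ is $\chi^2_p$-distributed and that $\E\big[e^{\lambda(\|\vZ\|^2 - p)}\big] = \exp\big(-\frac p2[\,2\lambda + \log(1-2\lambda)\,]\big)$ for every $\lambda < 1/2$.

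\emph{Upper tail.} Set $z = 2\sqrt{pt} + 2t$ and apply Markov's inequality to $e^{\lambda(\|\vZ\|^2 - p)}$, giving $\Prob(\|\vZ\|^2 - p \geq z) \leq \exp\big(-\lambda z - \frac p2[\,2\lambda + \log(1-2\lambda)\,]\big)$ for all $\lambda \in (0,1/2)$. The exponent is convex in $\lambda$ with an interior minimum at $\lambda^\ast = z/(2(p+z)) \in (0,1/2)$, and substituting this value collapses the bound to $\exp\big(-\frac p2\big[\frac zp - \log(1+\frac zp)\big]\big)$. It then remains to verify the scalar inequality $\frac zp - \log(1+\frac zp) \geq \frac{2t}{p}$ for this particular $z$; writing $s = \sqrt{t/p}$ so that $z/p = 2s(1+s)$, this reduces, via the elementary estimates $s - \log(1+s) \geq \frac{s^2}{2(1+s)}$ and $\log(1+x) \leq x$, to $(1+s)^2 \geq 1+s$, which holds for $s \geq 0$. \emph{Lower tail.} Set $z = 2\sqrt{pt}$; if $z \geq p$ then $\{\|\vZ\|^2 \leq p - z\}$ has probability $0$ and the bound is trivial, so assume $z < p$ and apply Markov to $e^{-\mu(\|\vZ\|^2 - p)}$, giving $\Prob(\|\vZ\|^2 \leq p - z) \leq \exp\big(\mu(p-z) - \frac p2\log(1+2\mu)\big)$ for $\mu > 0$. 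The minimizer $\mu^\ast = z/(2(p-z))$ yields $\exp\big(\frac z2 + \frac p2\log(1-\frac zp)\big)$, and the elementary bound $\log(1-v) \leq -v - \frac{v^2}{2}$ on $[0,1)$ collapses this to $\exp(-z^2/(4p)) = e^{-t}$.

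I do not expect any genuine obstacle here: the whole argument is classical and self-contained, and the only places needing care are the two closed-form one-variable convex optimizations and the handful of scalar inequalities invoked to match the precise constants. One could instead simply cite Laurent--Massart, but the proof is short enough to include in full.
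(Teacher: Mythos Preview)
Your argument is the standard Laurent--Massart proof, which is precisely what the paper invokes: its entire proof of this lemma is a one-line citation to Lemma~1 of Laurent and Massart (2000). The only place to tighten is the upper-tail scalar step---your reduction ``to $(1+s)^2 \geq 1+s$'' is opaque as written, and the cleanest verification is simply to observe that $2s(1+s) - \log\bigl(1+2s(1+s)\bigr) \geq 2s^2$ is equivalent to $e^{2s} \geq 1 + 2s + 2s^2$, which is immediate from the exponential series.
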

\begin{proof}[Proof of Lemma~\ref{lem:tail_bound_chisquare}]
See Lemma 1 in~\cite{LaurentMassart2000_AoS}.
\end{proof}

\begin{lem}[Deviation of Gaussian random matrices]
\label{lem:bound_Sv}
If $\mathcal E\in\mathbb R^{p\times n}$ has i.i.d. $N(0,\sigma^2)$ entries, then 
\begin{align*}
\mathbb P\big( \|\mathcal E\|_{\op} \geq \sigma(\sqrt{n}+\sqrt{p} +\sqrt{2t}\,)\big) \leq e^{-t},\quad\forall t>0.
\end{align*}
\end{lem}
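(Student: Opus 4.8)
The plan is to recognize $\|\mathcal E\|_{\op}$ as the supremum of a Gaussian process indexed by a product of spheres and to combine the Gaussian concentration inequality with a Gaussian comparison inequality. First I would reduce to the case $\sigma=1$ by homogeneity, replacing $\mathcal E$ by $\sigma^{-1}\mathcal E$, whose entries are i.i.d.\ $N(0,1)$. Then I would use the variational formula $\|\mathcal E\|_{\op}=\sup_{\vu\in S^{p-1},\,\vv\in S^{n-1}}\vu^{T}\mathcal E\vv$, where $S^{d-1}$ denotes the Euclidean unit sphere in $\bR^{d}$.

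The key structural observation is that, viewing $\mathcal E$ as a point in $\bR^{pn}$ equipped with the Frobenius norm, the map $\mathcal E\mapsto\|\mathcal E\|_{\op}$ is $1$-Lipschitz: for fixed $\vu,\vv$ we have $\vu^{T}(\mathcal E-\mathcal E')\vv=\langle \mathcal E-\mathcal E',\vu\vv^{T}\rangle\leq\|\mathcal E-\mathcal E'\|_{F}$ since $\|\vu\vv^{T}\|_{F}=1$, and taking suprema over $\vu,\vv$ on both sides preserves the bound. Consequently the Gaussian concentration inequality for Lipschitz functions of i.i.d.\ standard normals (see, e.g., Theorem~5.2.2 in~\cite{Vershynin2018_Cambridge}) gives $\Prob\big(\|\mathcal E\|_{\op}\geq\E\|\mathcal E\|_{\op}+s\big)\leq e^{-s^{2}/2}$ for all $s>0$. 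Undoing the normalization and choosing $s=\sigma\sqrt{2t}$ then yields exactly the claimed tail, \emph{provided} one can show $\E\|\mathcal E\|_{\op}\leq\sigma(\sqrt{n}+\sqrt{p})$.

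Bounding the expectation is the only nonelementary step, and the main (mild) obstacle. Here I would invoke Gordon's comparison inequality (Chevet's inequality), which gives $\E\sup_{\vu\in S^{p-1},\,\vv\in S^{n-1}}\vu^{T}\mathcal E\vv\leq\E\|\vg\|_{2}+\E\|\vh\|_{2}$ with $\vg\sim N(\vzero,\Id_{p})$ and $\vh\sim N(\vzero,\Id_{n})$ independent (see, e.g., Section~7.3 of~\cite{Vershynin2018_Cambridge}), and then Jensen's inequality bounds $\E\|\vg\|_{2}\leq\sqrt{\E\|\vg\|_{2}^{2}}=\sqrt{p}$ and similarly $\E\|\vh\|_{2}\leq\sqrt{n}$. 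A purely self-contained $\varepsilon$-net argument over $S^{p-1}\times S^{n-1}$ would instead give only $\E\|\mathcal E\|_{\op}\lesssim\sqrt{n}+\sqrt{p}$ with a suboptimal constant, which is not sharp enough for the way this lemma is used in Section~\ref{sec:proof_main_result}, where the exact constant $1$ in front of each of $\sqrt n$ and $\sqrt p$ enters the separation threshold; hence the comparison inequality is the right tool.
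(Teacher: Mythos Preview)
Your argument is correct. The paper does not actually prove this lemma: it simply cites Corollary~5.35 in~\cite{vershynin_2012}. What you have written is precisely the standard proof of that corollary---Gaussian concentration for the $1$-Lipschitz function $\mathcal E\mapsto\|\mathcal E\|_{\op}$ combined with the Gordon/Sudakov--Fernique comparison bound $\E\|\mathcal E\|_{\op}\leq\sqrt{n}+\sqrt{p}$---so your approach and the cited reference coincide. Your remark that an $\varepsilon$-net argument would lose the sharp constants (which matter downstream in Section~\ref{sec:proof_main_result}) is also on point.
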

\begin{proof}[Proof of Lemma~\ref{lem:bound_Sv}]
See Corollary 5.35 in~\cite{vershynin_2012}.
\end{proof}

\begin{lem}
\label{lem:product_two_gaussian_psi1_norm}
Let $\vepsilon_{1},\vepsilon_{2}$ be i.i.d. $N(\vzero, \Id_{p})$. Then there exists a universal constant $C$ such that 
\[
\| \| \vepsilon_{1} \|^{2} - p \|_{\psi_{1}} + \| \langle \vepsilon_{1},\vepsilon_{2} \rangle \|_{\psi_{1}} \leq C p^{1/2}.
\]
\end{lem}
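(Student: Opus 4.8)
The plan is to view both summands as quadratic forms in i.i.d.\ standard Gaussians, apply a Hanson--Wright-type concentration bound, and then convert the resulting Bernstein-type tail into a bound on the Orlicz norm $\|\cdot\|_{\psi_1}$.

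\emph{Step 1: tail bounds.} For the first term, write $\|\vepsilon_1\|^2 - p = \vepsilon_1^T \Id_p \vepsilon_1 - \tr(\Id_p)$. Since $\|\Id_p\|_{\HS}^2 = p$ and $\|\Id_p\|_{\op} = 1$, the Hanson--Wright inequality (e.g.\ Rudelson--Vershynin) yields a universal $c>0$ with
\[
\Prob\big( \big|\, \|\vepsilon_1\|^2 - p \,\big| \geq t \big) \leq 2\exp\!\Big(-c\min\big(t^2/p,\, t\big)\Big), \qquad t > 0.
\]
For the second term, stack $\vepsilon = (\vepsilon_1^T, \vepsilon_2^T)^T \in \R^{2p}$ and let $M$ be the symmetric $2p \times 2p$ matrix equal to $\tfrac12 \Id_p$ in each of the two off-diagonal $p\times p$ blocks and $0$ elsewhere, so that $\langle\vepsilon_1,\vepsilon_2\rangle = \vepsilon^T M \vepsilon$, $\tr(M) = 0$, $\|M\|_{\HS}^2 = p/2$, and $\|M\|_{\op} = \tfrac12$. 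Hanson--Wright again gives (after adjusting $c$)
\[
\Prob\big( \big|\langle\vepsilon_1,\vepsilon_2\rangle\big| \geq t \big) \leq 2\exp\!\Big(-c\min\big(t^2/p,\, t\big)\Big), \qquad t > 0.
\]
Alternatively, conditionally on $\vepsilon_2$ the inner product is $N(0,\|\vepsilon_2\|^2)$, and one combines the Gaussian tail with the $\chi^2$ tail of Lemma~\ref{lem:tail_bound_chisquare}.

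\emph{Step 2: from tails to $\psi_1$.} It is a standard fact that a tail bound $\Prob(|Y|\geq t)\leq 2\exp(-c\min(t^2/a^2,\, t/b))$, $t>0$, implies $\|Y\|_{\psi_1}\leq C(a+b)$ for a universal $C$. Indeed, $\E\,e^{|Y|/\lambda} = 1 + \lambda^{-1}\int_0^\infty e^{t/\lambda}\Prob(|Y|>t)\,\rd t$; splitting the integral at $t_0 = a^2/b$, where the two branches of the minimum agree, a completed-square Gaussian integral bounds the contribution of $t\leq t_0$ and a geometric-type integral bounds that of $t > t_0$, and both are $\leq \tfrac12$ once $\lambda \geq C(a+b)$. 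Applying this with $a\asymp\sqrt p$ and $b\asymp 1$, and using $p\geq 1$ so that $\sqrt p + 1 \leq 2\sqrt p$, gives $\|\,\|\vepsilon_1\|^2-p\,\|_{\psi_1}\leq Cp^{1/2}$ and $\|\langle\vepsilon_1,\vepsilon_2\rangle\|_{\psi_1}\leq Cp^{1/2}$. The triangle inequality for $\|\cdot\|_{\psi_1}$ then yields the claim.

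The only mildly delicate point — and the one I would spell out carefully — is the tail-to-Orlicz conversion in Step~2, making sure the bound comes out $O(\sqrt p)$ and not $O(p)$: the sub-gaussian branch of the tail is valid for $t$ up to order $p$, i.e.\ it controls all moments up to order $\sim p$, and only beyond that does the sub-exponential tail take over, which is exactly why $\|\cdot\|_{\psi_1}$ ends up of order $\sqrt p$. Everything else is routine bookkeeping with universal constants.
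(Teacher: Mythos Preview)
Your proof is correct and follows essentially the same two-step strategy as the paper: obtain a Bernstein-type tail $\Prob(|Y|\geq t)\leq 2\exp(-c\min(t^2/p,t))$ and then convert it to a $\psi_1$ bound of order $\sqrt{p}$ via the Orlicz integral, splitting at the crossover point of the two tail regimes. The only cosmetic difference is that you invoke Hanson--Wright for quadratic forms (with $A=\Id_p$ and the off-diagonal block matrix $M$), whereas the paper writes $\langle\vepsilon_1,\vepsilon_2\rangle=\sum_j\varepsilon_{1j}\varepsilon_{2j}$ as a sum of i.i.d.\ sub-exponentials and applies Bernstein's inequality directly; both routes yield the same tail with the same parameters $\|A\|_{\HS}^2\asymp p$, $\|A\|_{\op}\asymp 1$.
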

\begin{proof}[Proof of Lemma~\ref{lem:product_two_gaussian_psi1_norm}]
Note that $\langle \vepsilon_{1}, \vepsilon_{2} \rangle=\sum_{j=1}^p \varepsilon_{1j}  \varepsilon_{2j}$, and each additive component $\varepsilon_{1j}  \varepsilon_{2j}$ is sub-exponential with $\|\varepsilon_{1j}  \varepsilon_{2j}\|_{\psi_{1}}\leq \|\varepsilon_{1j}\|_{\psi_{2}}  \|\varepsilon_{2j}\|_{\psi_{2}}=1$ (cf. Lemma 2.7.7 in~\cite{Vershynin2018_Cambridge}).
By Bernstein's inequality (cf. Theorem 2.8.2 in~\cite{Vershynin2018_Cambridge}), there exists a universal constant $C_{1}$ such that for any $t > 0$, 
\[
\Prob(|\langle \vepsilon_{1}, \vepsilon_{2} \rangle| \geq t) \leq 2 \exp [ -C_{1} \min({t^{2} / p}, t) ].
\]
Let $C$ be a large positive real number. By integration-by-parts and change-of-variables, we have 
\begin{align*}
\E\Big[ \exp\Big({|\langle \vepsilon_{1}, \vepsilon_{2} \rangle| \over C}\Big) \Big] = & \int_{1}^{\infty} \Prob \Big( \exp\Big({|\langle \vepsilon_{1}, \vepsilon_{2} \rangle| \over C}\Big) > t \Big) \, \rd t \\
= & \int_{1}^{\infty} \Prob \Big( |\langle \vepsilon_{1}, \vepsilon_{2} \rangle| > C \log{t} \Big) \, \rd t \\
= & \int_{0}^{\infty} \Prob \Big( |\langle \vepsilon_{1}, \vepsilon_{2} \rangle| > C x \Big) \re^{x} \, \rd x \\
\leq & 2 \int_{0}^{p/C} \re^{-{C_{1} C^{2} \over p} x^{2} + x} \, \rd x + 2 \int_{p/C}^{\infty} \re^{-(C_{1}C-1)x} \, \rd x \\
\leq & 2 \re^{p \over 4 C_{1}C^{2}} \sqrt{\pi p \over C_{1}C^{2}} + {2 \over C_{1}C-2} \re^{-(C_{1}C-2){p \over C}}.
\end{align*}
Thus if we take $C = K p^{1/2}$ for some large enough universal constant $K > 0$, then 
\[
\E\Big[ \exp\Big({|\langle \vepsilon_{1}, \vepsilon_{2} \rangle| \over C}\Big) \Big] \leq 2,
\]
which implies that $\| \langle \vepsilon_{1},\vepsilon_{2} \rangle \|_{\psi_{1}} \leq K p^{1/2}$. The $\psi_{1}$ norm bound for $\| \vepsilon_{1} \|^{2} - p$ follows from similar lines. 
\end{proof}

\begin{lem}[Uniform Hanson-Wright inequality for Gaussian quadratic forms]
\label{lem:uniform_hw_quadratic}
Let $\vepsilon \sim N(0, \Id_{p})$ and $\cA$ be a bounded class of $p \times p$ matrices. Consider the random variable 
\[
Z = \sup_{A \in \cA} (\vepsilon^{T} A \vepsilon - \E[\vepsilon^{T} A \vepsilon]).
\]
Then there exists a universal constant $C$ such that for any $t > 0$, 
\[
\Prob(|Z - \E[Z]| \geq t) \leq 2 \exp\Big[ -C \min \Big( {t^{2} \over \|\vepsilon\|_{\cA}^{2}}, {t \over \sup_{A \in \cA} \|A\|_{\op}} \Big) \Big],
\]
where $\|\vepsilon\|_{\cA} = \E[\sup_{A \in \cA} \|(A+A^{T}) \vepsilon\|]$.
\end{lem}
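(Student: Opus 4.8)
The plan is to deduce the bound from Gaussian concentration by dominating the gradient of $\vepsilon \mapsto Z(\vepsilon)$ by a Lipschitz function of $\vepsilon$. First I would reduce to the case in which every $A \in \cA$ is symmetric: replacing $A$ by $(A+A^{T})/2$ changes neither $\vepsilon^{T} A \vepsilon$ nor $(A+A^{T})\vepsilon$, and can only decrease $\|A\|_{\op}$, so none of $Z$, $\|\vepsilon\|_{\cA}$, or $\sup_{A}\|A\|_{\op}$ is affected (up to a factor of $2$ absorbed into $C$). One may also assume $\cA$ is finite, since the right-hand side does not depend on $|\cA|$ and the general statement follows by exhausting $\cA$ with an increasing sequence of finite subsets, using boundedness of $\cA$ to pass to the limit (measurability of $Z$ for a general family being the usual separability caveat). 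With $\cA$ finite and each $A$ symmetric, $Z$ is locally Lipschitz (a maximum of polynomials), so by Rademacher's theorem it is differentiable a.e., and at any point where the defining supremum is attained at $A^{\ast}$ one has $\nabla Z(\vepsilon) = 2A^{\ast}\vepsilon = (A^{\ast}+(A^{\ast})^{T})\vepsilon$; hence $\|\nabla Z(\vepsilon)\| \le g(\vepsilon) := \sup_{A\in\cA}\|(A+A^{T})\vepsilon\|$ for a.e.\ $\vepsilon$.

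The function $g$ is a supremum of maps $x\mapsto\|(A+A^{T})x\|$, each $\|A+A^{T}\|_{\op}$-Lipschitz, so $g$ is $L$-Lipschitz with $L := \sup_{A\in\cA}\|A+A^{T}\|_{\op} \le 2\sup_{A\in\cA}\|A\|_{\op}$, and $\E[g(\vepsilon)] = \|\vepsilon\|_{\cA}$ by definition. The claim then reduces to the following general principle, applied to $f = Z$: if $f:\R^{p}\to\R$ is locally Lipschitz with $\|\nabla f\|\le g$ a.e.\ for some $L$-Lipschitz $g$, then $\Prob\big(|f(\vepsilon)-\E f(\vepsilon)|\ge t\big) \le 2\exp\big(-c\min(t^{2}/(\E g)^{2},\ t/L)\big)$ for $\vepsilon\sim N(0,\Id_{p})$. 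Crucially only the gradient bound enters — no convexity of $Z$ is needed — which is why the lower tail comes out symmetrically: applying the principle to $-f$ is legitimate since $\|\nabla(-f)\| = \|\nabla f\| \le g$ with the same $g$.

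To prove the principle I would run the Herbst argument with the Gaussian logarithmic Sobolev inequality: for $\lambda>0$, $\mathrm{Ent}(e^{\lambda f}) \le \tfrac{\lambda^{2}}{2}\E[\|\nabla f\|^{2}e^{\lambda f}] \le \tfrac{\lambda^{2}}{2}\E[g^{2}e^{\lambda f}]$. Writing $g^{2}\le 2(\E g)^{2}+2(g-\E g)^{2}$ and bounding $\E[(g-\E g)^{2}e^{\lambda f}]$ using that the $L$-Lipschitz function $g-\E g$ is sub-Gaussian (Cauchy--Schwarz against $\E e^{2\lambda f}$ together with Gaussian concentration for $g$), one obtains a differential inequality for $\lambda\mapsto\lambda^{-1}\log\E e^{\lambda(f-\E f)}$ which integrates, for $0<\lambda<c'/L$, to $\log\E e^{\lambda(f-\E f)}\lesssim\lambda^{2}(\E g)^{2}$; a Bernstein-type optimization over $\lambda\le c'/L$ in Markov's inequality then yields the sub-Gaussian regime for $t\lesssim(\E g)^{2}/L$ and the sub-exponential regime $e^{-c t/L}$ beyond. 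The main obstacle is precisely this bootstrapping step: decoupling the fluctuations of $g$ from those of $f$ inside the tilted expectation $\E[\,\cdot\,e^{\lambda f}]$, so that the term quadratic in $g$ on the right-hand side of the log-Sobolev inequality becomes the constant $(\E g)^{2}=\|\vepsilon\|_{\cA}^{2}$ at the price of restricting $\lambda$ to $\lesssim 1/L$ (which is what produces the $t/L$ term). One may alternatively cite this mixed-tail concentration principle for functions with a Lipschitz gradient bound directly from the literature, and then the proof is just the reduction of the first paragraph plus the gradient identification above.
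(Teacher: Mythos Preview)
Your approach is correct, and in fact you have essentially re-derived the content of the black box that the paper invokes. The paper's proof consists of two sentences: it observes that the standard Gaussian vector satisfies the dimension-free Lipschitz concentration inequality $\Prob(|\varphi(\vepsilon)-\E\varphi(\vepsilon)|\ge t)\le 2e^{-t^2/2}$, and then cites Theorem~2.10 of Adamczak (2015), which is precisely the ``general principle'' you articulate --- mixed sub-Gaussian/sub-exponential concentration for a function whose gradient is bounded by an $L$-Lipschitz function $g$, with the two regimes governed by $\E g$ and $L$. Your reduction (symmetrization, finite $\cA$, gradient identification $\|\nabla Z\|\le g$, Lipschitz constant $L\le 2\sup_A\|A\|_{\op}$ for $g$) is exactly the setup needed to feed into that theorem, and your Herbst/log-Sobolev sketch is how Adamczak proves it. You even note in your last sentence that one may simply cite the principle from the literature; that is verbatim what the paper does. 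So there is no genuine difference in strategy --- you have just unpacked the citation --- and no gap, though the bootstrapping step you flag (controlling $\E[(g-\E g)^2 e^{\lambda f}]$) does require care and is the substantive content of the cited theorem rather than something to be waved through.
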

\begin{proof}[Proof of Lemma~\ref{lem:uniform_hw_quadratic}]
Note that the standard Gaussian random vector $\vepsilon$ satisfies the concentration inequality 
\[
\Prob (|\varphi(\vepsilon) - \E[\varphi(\vepsilon)]| \geq t) \leq 2 \exp (- {t^{2} / 2} )
\]
for any $t > 0$ and every 1-Lipschitz function $\varphi : \bR^{p} \to \bR$ such that $\E[|\varphi(\vepsilon)|] < \infty$ (cf. Theorem 2.5.7 in~\cite{GineNickl2016}). Then the lemma follows from Theorem 2.10 in~\cite{adamczak2015}.
\end{proof}

\section*{Acknowledgement}
The authors would like to thank two anonymous referees and the Associate Editor Prof. Stephane Boucheron for their many constructive comments. X. Chen's research was supported in part by NSF CAREER Award DMS-1752614, UIUC Research Board Award RB18099, and a Simons Fellowship. Y. Yang's research was supported in part by NSF DMS-1810831. X. Chen acknowledges that part of this work was carried out in the Institute for Data, System, and Society (IDSS) at Massachusetts Institute of Technology.

\bibliographystyle{plain}
\bibliography{sharp_threshold_mixture}

\end{document}